\documentclass[11pt,oneside]{article}

\usepackage[square, numbers]{natbib}

\usepackage{xcolor} % textcolor
\usepackage{graphicx} % Required for inserting images
\usepackage{amsmath}
\usepackage{amsfonts}
\usepackage{amssymb}
\usepackage{amsthm}
\usepackage{dsfont}
\usepackage{mathrsfs}
\usepackage{bbm}
\usepackage{enumerate}
\usepackage{mathtools}		%dcases
\usepackage{hyperref}
\usepackage{bm}

\allowdisplaybreaks

\usepackage[textwidth=2.8cm]{todonotes}
%\reversemarginpar

\usepackage[nottoc]{tocbibind}
\usepackage{tocloft}
\setlength{\cftbeforesecskip}{-1.5pt}
\setlength{\cftbeforesubsecskip}{-1.5pt}
\setlength{\cftbeforesubsubsecskip}{-1.5pt}

% Remove \bfseries from section titles in ToC

\renewcommand{\cftsubsubsecpagefont}

\numberwithin{equation}{section}

\newtheorem{thm}{Theorem}[section]
\newtheorem{lem}[thm]{Lemma}
\newtheorem{prop}[thm]{Proposition}

\theoremstyle{definition}
\newtheorem{defn}[thm]{Definition}

\theoremstyle{definition}
\newtheorem{assumption}[thm]{Assumption}

\theoremstyle{remark}
\newtheorem{rem}[thm]{Remark}

\newcommand{\mut}{\tilde{\mu}}
\newcommand{\Gt}{\tilde{G}}
\newcommand{\R}{\mathbb{R}}
\newcommand{\E}{\mathbb{E}}
\newcommand{\N}{\mathbb{N}}

\renewcommand{\P}{\mathbb{P}}

\newcommand{\Bc}{\mathcal{B}}
\newcommand{\Dc}{\mathcal{D}}
\newcommand{\Pc}{\mathcal{P}}
\newcommand{\Cc}{\mathcal{C}}
\newcommand{\Wc}{\mathcal{W}}
\newcommand{\Gc}{\mathcal{G}}
\newcommand{\Mc}{\mathcal{M}}
\newcommand{\Wop}{\mathcal{WOP}}

\newcommand{\Q}{\mathbb{Q}}

\newcommand{\Xt}{\Tilde{X}}

\newcommand{\Lc}{\mathcal{L}}

\newcommand{\mub}{\bar{\mu}}

\newcommand{\eps}{\varepsilon}
\newcommand{\W}{\mathcal{W}}
\newcommand{\x}{\times}

\setlength{\topmargin}{-1.3cm} 
\setlength{\voffset}{-0.04cm}
\setlength{\hoffset}{-0.4cm}
\setlength{\textheight}{646pt}
\setlength{\headheight}{1cm} \setlength{\headsep}{0.5cm}
\setlength{\textwidth}{16cm}
\setlength{\evensidemargin}{10pt} \setlength{\oddsidemargin}{10pt}

\title{Graphon Particle Systems with Common Noise}
\author{Erhan Bayraktar
        \footnote{Department of Mathematics, University of Michigan. erhan@umich.edu}
        \and
        Xihao He
        \footnote{Department of Mathematics, University of Michigan. hexihao@umich.edu}
        \and
        Donghan Kim
        \footnote{Department of Mathematical Sciences, KAIST. kimdonghan@kaist.ac.kr}
        }

\date{\today}

\begin{document}

\maketitle

\begin{abstract}
    \noindent We study a nonlinear graphon particle system driven by both idiosyncratic and common noise, where interactions are governed by a graphon and represented as positive finite measures. Each particle evolves via a McKean–Vlasov-type SDE with graphon-weighted conditional laws. We prove a law of large numbers for the empirical and interaction measures, using generalized Wasserstein metrics and weak convergence techniques suited for the non-Markovian structure induced by common noise.
\end{abstract}

\tableofcontents
\section{Introduction}

We study a class of interacting particle systems indexed by a continuum of agents, whose interactions are governed by a graphon, a symmetric measurable function $G : [0, 1] \times [0, 1] \to [0, 1]$ that generalizes adjacency matrices to the continuum limit. Specifically, we consider a nonlinear graphon particle system driven by both idiosyncratic and common noise. Each agent evolves according to a McKean-Vlasov-type stochastic differential equation, where the drift and diffusion coefficients depend on the agent’s current state and a graphon-weighted average of the distributions of other agents.

The study of graphon particle systems has gained considerable attention in recent years, motivated by the need to analyze interacting systems with heterogeneous network structures beyond the scope of classical mean-field models \cite{BCW, Bayraktar_Kim_2024, BW2, BW, BWZ, bayraktar2024nonparametricestimatesgraphonmeanfield, Coppini:Graphon, nonlinear:graphon}. Graphon-based frameworks provide a natural continuum limit of graph sequences and have been widely applied not only in mean field games \cite{Aurell2022, graphon:game:GMFG, Graphon:game, Carmona:graphon_games, toymodel:Erdos-Renyi, GCH, Gao:Tchuendom:Caines, Lacker:graphon_games, TCH2, TCH, VMV}, but also in optimization \cite{chen2025graphonparticlesystemsii}, economics \cite{graphon:econometrica}, and epidemiology \cite{graphon:epidem}.

A substantial line of work has focused on approximating graphon particle systems by finite particle systems that retain a similar interaction structure through discretized graphons. A law of large numbers–type convergence between the two systems was first established in \cite{BCW}. Uniform-in-time convergence was later studied in \cite{BW2}, while concentration bounds on the Wasserstein distances between the empirical measure of the finite system and the averaged law of the graphon system were obtained in \cite{Bayraktar_Kim_2024, BW}. Law of large numbers and propagation of chaos results for nonlinear graphon systems, in which the interaction is given as a nonlinear function of local empirical measures, are presented in \cite{nonlinear:graphon}.

The main results of this paper establish a law of large numbers for graphon-weighted empirical measures in the presence of common noise. We consider a sequence of finite particle systems interacting through discretized graphons and show that, as the number of particles tends to infinity, the empirical and interaction measures converge to their continuum counterparts conditioned on the common noise, defined by the graphon system. This convergence is rigorously formulated and quantified using a generalized Wasserstein metric suited for positive finite measures. Our results include the convergence of both the empirical probability measures of the finite system and the graphon-weighted interaction measures that appear in the drift and diffusion terms of the dynamics. We further derive explicit convergence rates for the expected distance between the state in the finite particle system and the corresponding state in the mean-field system.

While existing studies on graphon particle systems typically assume independent Brownian noise for each agent, our work is, to our knowledge, the first to incorporate common noise to prove such convergence results. This addition reflects the realistic features of many complex systems, such as financial markets subject to macroeconomic shocks or biological populations influenced by environmental fluctuations. Consequently, adding common noise to control problems and mean-field games in diverse interaction systems has gained popularity recently \cite{Graphon:Qnoise, Tanpi:Zhou, Xu:Gou}. The presence of common noise introduces substantial analytical difficulties, as the relevant distributions become random conditional laws that evolve in a nonlinear and non-Markovian manner within infinite-dimensional path spaces.

To overcome these challenges, our technical approach is to analyze the joint distributions of the empirical and interaction measures as elements in infinite-dimensional spaces such as $\Pc(\Pc(\Cc^d) \x \Pc(\Cc^d))$, the space of probability measures over pairs of probability measures on the $d$-dimensional continuous function space. We establish the tightness of these joint laws, derive weak convergence using conditional independence structures, and invoke a classical characterization of Wasserstein convergence to conclude convergence in the $\W_2$-metric. This argument enables a rigorous derivation of novel law of large numbers for empirical laws of processes on $[0, T]$ in the presence of common noise (see Theorems \ref{thm: relaxed_bar mu_n to bar mu}, \ref{thm: relaxed_weighted mean W2 continuity}).

Another feature of our work is the treatment of graphon-weighted interactions, which gives rise to positive finite measures rather than probability measures. The interaction term for each agent is represented by a graphon-weighted average of the conditional laws of the population, and such measures generally do not have unit mass. To analyze convergence in this setting, we adopt a generalized Wasserstein-type distance $\Wop_2$, introduced in \cite{extendingWmetric}, which extends the standard 2-Wasserstein distance to the space of positive finite measures. This framework allows us to compare interaction measures in a metrically consistent way and to quantify convergence without requiring normalization, which is essential in the graphon setting where local interaction intensity varies across agents.

Our work builds upon and significantly extends the nonlinear graphon particle system studied in \cite{nonlinear:graphon}, which considered similar graphon-weighted interactions but without common noise. In that setting, the interaction measures were normalized to have unit mass, allowing the use of standard Wasserstein distances on probability measures. By contrast, the inclusion of common noise in our model requires treating the empirical and interaction measures as random (conditional) laws, which are no longer deterministic nor normalized. This motivates our use of the generalized Wasserstein distance $\Wop_2$ for positive finite measures, and also enables us to remove certain conditions imposed on the graphon in \cite{nonlinear:graphon}, for example, $\Vert G(u, \cdot)\Vert_1 := \int_0^1 G(u, v)dv > 0$ and $\int_0^1 \Vert G(u, \cdot)\Vert_1^{-1}du < \infty$. Moreover, we prove the convergence of the finite particle system to the graphon system at the level of process laws. This result strictly strengthens Theorem 4.1 in \cite{nonlinear:graphon}, which only controls time–integrated Wasserstein distances of empirical marginal measures.

The techniques developed in this paper can also be extended to systems with delayed or feedback-type interactions, and may serve as a foundation for studying central limit theorems or control problems in graphon-based mean field models with common noise.

\bigskip

\section{Setting}

\subsection{Graphon particle system and approximating finite particle system}

For a fixed $T > 0$, we study a graphon particle system with common noise:
\begin{align}   \label{eq : graphon particle system with common noise}
	dX_u(t) &= \phi_p \bigg(X_u(t), ~ \int_0^1 G_{u, v} \mu_v(t) dv \bigg) dt + \phi_b \bigg(X_u(t), ~ \int_0^1 G_{u, v} \mu_v(t) dv \bigg) dB_u(t)
	\\
	& \qquad \qquad \qquad + \phi_w \bigg(X_u(t), ~ \int_0^1 G_{u, v} \mu_v(t) dv \bigg) dW(t), \qquad \forall \, u \in [0, 1], \qquad t \in (0, T].    \nonumber
\end{align}
Here, $\phi_p$, $\phi_b$, and $\phi_w$ are $\R^d$, $\R^{d \times d}$, and $\R^{d \times m}$-valued measurable maps on $\mathbb{R}^d \times \mathcal{M}(\mathbb{R}^d)$, with regularity assumptions stated in Assumption~\ref{ass:Lipschitz}, representing the drift, idiosyncratic diffusion, and common-noise diffusion coefficients, respectively. $\{B_u\}_{u \in [0, 1]}$ are independent $d$-dimensional Brownian motions, also independent of another $m$-dimensional Brownian motion $W$, modeling a common noise. We assume that the initial distributions $\{X_u(0)\}_{u \in [0, 1]}$ are independent $\R^d$-valued random variables, independent of $\{B_u\}_{u \in [0, 1]}$ and $W$. The measurable mapping $[0, 1] \times [0, 1] \ni (u, v) \mapsto G_{u, v} \in [0, 1]$, satisfying the symmetry $G_{u, v} = G_{v, u}$ for each $u, v \in [0, 1]$, is called a graphon, and $\mu_u(t)$ is the conditional probability distribution of $X_u(t)$ given $W$ for every $u \in [0, 1]$ and $t \in [0, T]$. Since each $\mu_u(t)$ is a probability measure, the integral $\int_0^1 G_{u, v} \mu_v(t) dv$ should be understood as a positive measure in the sense of
\begin{align*}
	\bigg(\int_0^1 G_{u, v} \mu_v(t) dv \bigg)(A) &:= \int_0^1 G_{u, v} \big(\mu_v(t)(A)\big) dv
\end{align*}
for any Borel-measurable set $A \in \mathcal{B}(\R^d)$. The functions $\phi_{\alpha}$ are defined on $\R^d \times \mathcal{M}(\R^d)$ for $\alpha = p, b, w$, where we denote $\mathcal{M}(\R^d)$ the collection of positive finite measures on $\R^d$.

The system \eqref{eq : graphon particle system with common noise} can be approximated by the finite particle system with $n$ particles:
\begin{align}   
	dX^n_i(t) = \phi_p \bigg(X^n_i(t),  ~\frac{1}{n} \sum_{j=1}^n G^n_{i, j} \delta_{X^n_j(t)} \bigg) dt 
	& + \phi_b \bigg(X^n_i(t),  ~\frac{1}{n} \sum_{j=1}^n G^n_{i, j} \delta_{X^n_j(t)} \bigg) dB_{{i/n}}(t)     \label{eq : finite particle system with common noise}
	\\
	& + \phi_w \bigg(X^n_i(t),  ~\frac{1}{n} \sum_{j=1}^n G^n_{i, j} \delta_{X^n_j(t)} \bigg) dW(t).        \nonumber
\end{align}
Here, we assume $X^n_i(0) = X_{{i/n}}(0)$, and consider for each $n \in \mathbb{N}$ a discretized graphon $G^n :[n] \times [n] \ni (i, j) \mapsto G^n_{i, j} \in [0, 1]$, which approximates the graphon $G$, with the notion $[n] := \{1, 2, \cdots, n\}$ (see Assumption \ref{ass:G^n_and_G} below). 
We can also write $G^n : [0,1] \x [0,1] \longrightarrow [0,1]$ by $(G^n)_{u,v} = G_{\lceil nu \rceil/n,\lceil nv \rceil/n}$, for $u,v \in [0,1]$.
One such example is
\begin{equation}    \label{ex: discrete graphon}
	G^n_{i, j} = \int_0^1 \int_0^1 G_{\frac{i + u}{n}, \frac{j + v}{n}} \, du \, dv, \qquad 
	\forall \, (i, j) \in [n] \times [n].
\end{equation}

\subsection{Extending Wasserstein metric to positive measures}   \label{subsec: Wop}

Since we need to handle positive measures like $\int_0^1 G_{u, v}\mu_{v}(t)dv$ in the graphon particle system \eqref{eq : graphon particle system with common noise}, we introduce in this subsection a new metric on $\mathcal{M}(\R^d)$, i.e. the space of positive finite measures. Extending 2-Wasserstein metric to $\mathcal{M}_2(\R^d)$, which consists of elements in $\mathcal{M}(\R^d)$ with finite second moment, has been studied in \cite{extendingWmetric}, so we first define $\Wop_2$ on $\mathcal{M}_2(\R^d)$ and its properties without proof. In what follows, we shall use the notations for a measurable space $X$:
\begin{itemize}
	\item $m_{\mu}$ is the total mass of a measure $\mu$;
	\item $\mu^{\circ}  = \mu/m_{\mu}$ if $m_{\mu} > 0$ and $\delta_{x_0}$ otherwise, where $\delta_{x_0}$ denotes the Dirac measure at $x_0$;
	\item $M_{x_0}(\mu) := \int_X \Vert x-x_0 \Vert^2 d\mu(x)$ for some $x_0 \in X$;
	\item $T_a(x) = a(x-x_0)+x_0$ for some $a > 0$;
	\item $T_{\#}\mu$ is the pushforward of a measure $\mu$ by a measurable mapping $T$.
	\item $\mathcal{M}(X)$ is the collection of positive finite measures on $X$;
	\item $\mathcal{M}_2(X)$ is the collection of positive finite measures on $X$ with finite second order moment;
	\item $\mathcal{M}_{2,K}(X)$ is the sub-collection of $\mathcal{M}_2(X)$ satisfying $M_{x_0}(\mu) \le Km_{\mu}$ for some $K > 0$;
	\item $\mathcal{P}(X)$ is the collection of probability measures on $X$;
	\item $\mathcal{P}_2(X)$ is the collection of probability measures on $X$ with finite second order moment.
\end{itemize}

\begin{defn}    \label{Defn: WOP R^d}
	Fix an arbitrary reference point $x_0 \in \R^d$. For $\mu$ and $\nu$ in $\mathcal{M}_2(\R^d)$, the 2-Wasserstein On Positive measures ($\Wop_2$) metric is defined by
	\begin{equation}    \label{def: WOP}
		\Wop^2_2(\mu, \nu) = (m_{\mu} - m_{\nu})^2 + \Wc^2_2(T_{m_{\mu}}\#\mu^\circ, T_{m_{\nu}}\#\nu^\circ).
	\end{equation}
\end{defn}

\begin{lem} \label{Lem: WOP R^d}
	$\Wop_2$ on $\mathcal{M}_2(\R^d)$ has the following properties.
	\begin{enumerate}[label=(\roman*)]
		\item $\Wop_2$ is a metric on $\mathcal{M}_2(\R^d)$.
		\item $\Wop^2_2(\mu, \nu) = (m_{\mu} - m_{\nu})^2 + (m_{\mu} - m_{\nu})\big( M_{x_0}(\mu) - M_{x_0}(\nu) \big) + m_{\mu}m_{\nu}\Wc^2_2(\mu^\circ, \nu^\circ)$.
		\item For a positive constant $K>0$, the space $(\mathcal{M}_{2,K}(\R^d), \Wop_2)$ is complete.
	\end{enumerate}
\end{lem}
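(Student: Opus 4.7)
The plan is to begin with part (ii), since the explicit formula will serve as the computational backbone for parts (i) and (iii). Writing
\[
\Wc_2^2(T_{m_\mu}\#\bar\mu, T_{m_\nu}\#\bar\nu) = \inf_{\pi \in \Pi(\bar\mu, \bar\nu)} \int \|m_\mu(x-x_0) - m_\nu(x'-x_0)\|^2 \, d\pi(x, x'),
\]
I would expand the square, use $m_\mu^2 M_{x_0}(\bar\mu) = m_\mu M_{x_0}(\mu)$ (and similarly for $\nu$), and reduce the cross term via the standard identity
\[
\sup_{\pi \in \Pi(\bar\mu,\bar\nu)} \int \langle x-x_0, x'-x_0\rangle \, d\pi = \tfrac{1}{2}\bigl(M_{x_0}(\bar\mu) + M_{x_0}(\bar\nu) - \Wc_2^2(\bar\mu, \bar\nu)\bigr).
\]
Collecting terms yields the announced formula.

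For part (i), non-negativity and symmetry are immediate from the definition. Identity of indiscernibles follows because $\Wop_2(\mu,\nu)=0$ forces $m_\mu = m_\nu$ and $T_{m_\mu}\#\bar\mu = T_{m_\mu}\#\bar\nu$; the affine map $T_a$ is a bijection for $a > 0$ (and when $a=0$ the total masses vanish), so $\bar\mu = \bar\nu$ and hence $\mu=\nu$. The cleanest way to get the triangle inequality is to view $\mu \mapsto (m_\mu, T_{m_\mu}\#\bar\mu)$ as an injection of $\mathcal{M}_2(\R^d)$ into the product metric space $[0,\infty)\times\Pc_2(\R^d)$ equipped with the Euclidean product $d((a,P),(b,Q)) := \sqrt{|a-b|^2 + \Wc_2^2(P,Q)}$; then $\Wop_2$ is the pullback of $d$, which is automatically a metric.

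For part (iii), let $\{\mu_n\}$ be Cauchy in $(\mathcal{M}_{2,K}(\R^d),\Wop_2)$. Then $m_{\mu_n}$ is Cauchy in $\R$ with limit $m \ge 0$, and $T_{m_{\mu_n}}\#\bar\mu_n$ is Cauchy in $(\Pc_2(\R^d),\Wc_2)$, hence converges to some $\pi$ by completeness of $\Wc_2$. If $m>0$, I define $\bar\mu := T_{1/m}\#\pi$ and $\mu := m\bar\mu$, so that $T_m\#\bar\mu = \pi$ (using $T_m \circ T_{1/m} = T_1 = \mathrm{Id}$) and hence $\Wop_2^2(\mu_n,\mu) = |m_{\mu_n}-m|^2 + \Wc_2^2(T_{m_{\mu_n}}\#\bar\mu_n, \pi) \to 0$. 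To confirm $\mu \in \mathcal{M}_{2,K}$, I use the identity $M_{x_0}(T_{m_{\mu_n}}\#\bar\mu_n) = m_{\mu_n} M_{x_0}(\mu_n)$, pass to the limit via convergence of second moments under $\Wc_2$, and invoke the uniform bound $M_{x_0}(\mu_n) \le K m_{\mu_n}$ before taking limits. If $m=0$, the formula from (ii) with $\nu$ the zero measure gives $\Wop_2^2(\mu_n,0) = m_{\mu_n}^2 + m_{\mu_n} M_{x_0}(\mu_n) \le (1+K) m_{\mu_n}^2 \to 0$, and the zero measure trivially lies in $\mathcal{M}_{2,K}$.

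The main obstacle is the $m=0$ case of (iii): the sequence $T_{m_{\mu_n}}\#\bar\mu_n$ may converge to a $\pi$ that is not a Dirac, so one cannot simply invert $T_{m_{\mu_n}}$ to recover a limiting probability measure. This is precisely where the restriction to $\mathcal{M}_{2,K}$ is indispensable, and the formula from (ii) makes the degeneration transparent by revealing the explicit collapse rate $O(m_{\mu_n}^2)$.
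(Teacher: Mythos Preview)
Your proposal is correct. The paper does not actually prove this lemma in-text; it simply states that ``the proofs of the results in Lemma~\ref{Lem: WOP R^d} can be found in \cite{extendingWmetric},'' so there is no in-paper argument to compare against directly.

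That said, your approach to part (iii) closely parallels the paper's own proof of the subsequent theorem that $(\Mc_{2,K}(\Cc^d),\Wop_2)$ is Polish: the same decomposition of a Cauchy sequence into its mass component $m_{\mu_n}$ and its pushforward component $T_{m_{\mu_n}}\#\bar\mu_n$, the same case split on $m>0$ versus $m=0$, and the same use of the $K$-bound to force the degenerate limit. One minor remark on your closing paragraph: within $\Mc_{2,K}$, the sequence $T_{m_{\mu_n}}\#\bar\mu_n$ \emph{must} converge to $\delta_{x_0}$ when $m=0$, since $\Wc_2^2(T_{m_{\mu_n}}\#\bar\mu_n,\delta_{x_0}) = m_{\mu_n} M_{x_0}(\mu_n) \le K m_{\mu_n}^2 \to 0$; your comment that $\pi$ ``may not be a Dirac'' describes what could go wrong \emph{without} the $K$-restriction, which is indeed the right intuition for why the restriction is needed. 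Your embedding argument for the triangle inequality in part (i)---pulling back the product metric on $[0,\infty)\times\Pc_2(\R^d)$---is clean and slightly slicker than verifying the triangle inequality directly.
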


The proofs of the results in Lemma \ref{Lem: WOP R^d} can be found in \cite{extendingWmetric}. In the following, we describe a similar extension of the 2-Wasserstein metric to $\mathcal{M}_2(\mathcal{C}^d)$. 
Here and in what follows, we denote $\mathcal{C}^d = C([0, T], \R^d)$ the space of $\R^d$-valued continuous functions on $[0, T]$ for a fixed $T>0$, equipped with the topology of uniform convergence. For a Polish space $S$, we equip $\mathcal{P}(S)$ with the topology of weak convergence.

\begin{defn}    \label{Def: WOP C^d}
	Fix $x_0 \in \mathcal{C}^d$. For $\mu$ and $\nu$ in $\mathcal{M}_2(\mathcal{C}^d)$, the 2-Wasserstein On Positive measures ($\Wop_2$) metric is defined by
	\begin{equation}    \label{def: WOP C^d}
		\Wop^2_2(\mu, \nu) = (m_{\mu} - m_{\nu})^2 + \Wc^2_2(T_{m_{\mu}}\#\mu^\circ, T_{m_{\nu}}\#\nu^\circ).
	\end{equation}
\end{defn}
In particular, if $\mu, \nu \in \mathcal{P}(\mathcal{C}^d)$, then 
$
\Wop_2(\mu, \nu) = \Wc_2(\mu, \nu),
$
thus $\Wop_2$ is a generalization of $\Wc_2$ to positive finite measures on $\Cc^d$. Checking that $\Wop_2$ is a metric is straightforward; its proof is essentially the same as that of Lemma~\ref{Lem: WOP R^d} (i) (Theorem 2 of \cite{extendingWmetric}), so we omit the proof.

For $x, y \in \mathcal{C}^d$ and $a, b \ge 0$, we have
\begin{align}
	\big\Vert a(x-x_0)&-b(y-x_0) \big\Vert^2_{*, T} = \sup_{t \in [0, T]} \Big\vert a\big(x(t)-x_0(t)\big)-b\big(y(t)-x_0(t)\big) \Big\vert^2  \label{ineq: sup t}
	\\
	& = \sup_{t \in [0, T]} \Big( a(a-b)\big(x(t)-x_0(t)\big)^2 + b(b-a)\big(y(t)-x_0(t)\big)^2 + ab\big(x(t)-y(t)\big)^2 \Big) \nonumber
	\\
	& \le a(a-b) \Vert x-x_0 \Vert^2_{*, T} + b(b-a) \Vert y-x_0 \Vert^2_{*, T} + ab \Vert x-y \Vert^2_{*, T}.   \nonumber
\end{align}
Thus, applying \eqref{ineq: sup t} under an arbitrary coupling $\pi \in \Pi(\mu^\circ,\nu^\circ)$ and then minimizing over $\pi$, we obtain
\begin{equation}    \label{ineq: WOP}
	\Wop^2_2(\mu, \nu) \le (m_{\mu} - m_{\nu})^2 + (m_{\mu} - m_{\nu})\big(M_{x_0}(\mu) - M_{x_0}(\nu)\big) + m_{\mu}m_{\nu}\Wc_2^2(\mu^\circ,\nu^\circ).
\end{equation}
\begin{rem} \label{rem: equal mass WOP}
	If $m_{\mu}=m_{\nu}$, then \eqref{ineq: sup t} yields
	\begin{equation}    \label{eq: WOP}
		\Wop_2^2(\mu,\nu) = m_\mu^2 \Wc_2^2(\mu^\circ,\nu^\circ).
	\end{equation}
\end{rem}

\begin{thm} \label{thm:Polish}
	The space $(\Mc_{2,K}(\Cc^d), \Wop_2)$ is a Polish space.
\end{thm}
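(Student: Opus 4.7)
The claim is that $(\Mc_{2,K}(\Cc^d), \Wop_2)$ is Polish, which amounts to separability and completeness. The plan is to verify each property by decomposing the $\Wop_2$ metric via its definition \eqref{def: WOP C^d} into a mass component in $\R$ and a rescaled probability component in $(\Pc_2(\Cc^d), \Wc_2)$, and then to transfer standard facts from those two Polish spaces.

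For completeness, let $\{\mu_n\} \subset \Mc_{2,K}(\Cc^d)$ be Cauchy. Since $|m_{\mu_n} - m_{\mu_k}| \le \Wop_2(\mu_n, \mu_k)$ and $\Wc_2(T_{m_{\mu_n}}\#\bar\mu_n, T_{m_{\mu_k}}\#\bar\mu_k) \le \Wop_2(\mu_n, \mu_k)$ directly from \eqref{def: WOP C^d}, I obtain $m_{\mu_n} \to m \ge 0$ in $\R$ and, using that $(\Pc_2(\Cc^d), \Wc_2)$ is Polish because $\Cc^d$ is, $T_{m_{\mu_n}}\#\bar\mu_n \to \rho$ for some $\rho \in \Pc_2(\Cc^d)$. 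By $\Wc_2$-continuity of the second moment and the bound $M_{x_0}(\bar\mu_n) \le K$ valid on $\Mc_{2,K}$, I have $M_{x_0}(\rho) = \lim_n m_{\mu_n}^2 M_{x_0}(\bar\mu_n) \le m^2 K$. When $m > 0$, the candidate limit is $\mu := m \cdot (T_{1/m}\#\rho)$; a direct computation yields $M_{x_0}(\mu) = m^{-1} M_{x_0}(\rho) \le mK = K m_\mu$, so $\mu \in \Mc_{2,K}$, and $\Wop_2(\mu_n, \mu) \to 0$ follows immediately from \eqref{def: WOP C^d}. When $m = 0$, the estimate forces $M_{x_0}(\rho) = 0$, hence $\rho = \delta_{x_0}$, and the zero measure serves as the limit.

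For separability, fix a countable dense set $\{x_i\}_{i \in \N} \subset \Cc^d$, which exists since $\Cc^d$ is Polish, and consider the countable family
\begin{equation*}
D := \Big\{ r \sum_{i=1}^N q_i \delta_{x_i} : r \in \Q_{\ge 0},\ N \in \N,\ q_i \in \Q_{\ge 0},\ \sum_{i=1}^N q_i = 1 \Big\}.
\end{equation*}
Given $\mu \in \Mc_{2,K}(\Cc^d)$ with $m_\mu > 0$, I would pick rationals $r_n \to m_\mu$ and, by separability of $(\Pc_2(\Cc^d), \Wc_2)$, probability measures $\nu'_n$ of the form $\sum q_i \delta_{x_i}$ with $\Wc_2(\bar\mu, \nu'_n) \to 0$. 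Setting $\nu_n := r_n \nu'_n \in D$, the inequality \eqref{ineq: WOP} yields
\begin{equation*}
\Wop_2^2(\mu, \nu_n) \le (m_\mu - r_n)^2 + (m_\mu - r_n)\bigl(M_{x_0}(\mu) - M_{x_0}(\nu_n)\bigr) + m_\mu r_n \Wc_2^2(\bar\mu, \nu'_n) \longrightarrow 0,
\end{equation*}
where convergence of the middle term uses that $\Wc_2$-convergence in $\Pc_2$ implies convergence of second moments. The case $m_\mu = 0$ is trivial. Hence $D$ is $\Wop_2$-dense in $\Mc_2(\Cc^d)$, and separability of the subspace $\Mc_{2,K}(\Cc^d)$ follows.

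The main obstacle is the degenerate completeness case $m = 0$: the rescaling $T_{m_{\mu_n}}$ collapses every configuration toward $x_0$, and without the uniform control $M_{x_0}(\bar\mu_n) \le K$ built into $\Mc_{2,K}$, the rescaled limit $\rho$ could carry spurious mass escaping from unbounded tails of $\bar\mu_n$ and thus fail to match $\delta_{x_0}$. The constraint defining $\Mc_{2,K}$ is precisely what closes off this degeneracy and underlies the choice to restrict to $\Mc_{2,K}$ rather than to all of $\Mc_2$.
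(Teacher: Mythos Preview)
Your proof is correct and follows essentially the same approach as the paper: both decompose $\Wop_2$ into its mass and rescaled-probability components, handle completeness via the same two-case analysis (with the $\Mc_{2,K}$ bound forcing $\rho=\delta_{x_0}$ when $m=0$), and establish separability by approximating $\bar\mu$ in $(\Pc_2(\Cc^d),\Wc_2)$ and invoking inequality \eqref{ineq: WOP}. The only cosmetic difference is that the paper builds its countable dense set directly inside $\Mc_{2,K}$ (scaling a dense subset of $\Pc_2\cap\Mc_{2,K}$ by dense positive rationals), whereas you approximate by elements of $D\subset\Mc_2$ that need not lie in $\Mc_{2,K}$ and then appeal to subspace separability; both routes are valid.
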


\begin{proof}
    We only sketch the proof, since it is a standard adaptation of the argument for	$\Mc_{2,K}(\R^d)$ in \cite{extendingWmetric}. Let $(\mu_n)_{n\in\N}$ be a Cauchy sequence in $(\Mc_{2,K}(\Cc^d),\Wop_2)$.	Then, $(m_{\mu_n})_{n\in\N}$ is Cauchy in $\R_+$ and $(T_{m_{\mu_n}}\#\mu_n^\circ)_{n\in\N}$ is Cauchy in $(\Pc_2(\Cc^d),\Wc_2)$. By completeness of $\Wc_2$, there exist $m\ge0$ and $\lambda\in\Pc_2(\Cc^d)$ such that
	\[
		m_{\mu_n}\to m, \qquad \Wc_2(T_{m_{\mu_n}}\#\mu_n^\circ,\lambda) \to 0.
	\]
	If $m>0$, define $\mu:=mT_m^{-1}\#\lambda$. Then $\mu_n\to\mu$ in $\Wop_2$ and $\mu\in\Mc_{2,K}(\Cc^d)$; the bound $M_{x_0}(\mu)\le Km$ follows from Lemma \ref{lem : villani} and the uniform bound $M_{x_0}(\mu_n)\le Km_{\mu_n}$.

	If $m=0$, then $\Wc_2^2(T_{m_{\mu_n}}\#\mu_n^\circ,\delta_{x_0}) \le K m_{\mu_n}^2 \to 0$ so necessarily $\lambda=\delta_{x_0}$, and hence $\mu_n\to 0$ in $\Wop_2$.

	Separability follows from separability of $(\Pc_2(\Cc^d),\Wc_2)$ together with the density of $\Q_+$ in $\R_+$, exactly as in the standard argument.
\end{proof}

\subsection{Fubini extension}

The graphon particle system \eqref{eq : graphon particle system with common noise} consists of the continuum of particles $\{X_u\}_{u \in I}$, as well as Brownian motions $\{B_u\}_{u \in I}$ with $I := [0, 1]$. Since the mapping $\Omega \times I \ni (\omega, u) \mapsto B_u(\omega)$ is not jointly measurable in the usual continuum product space with Lebesgue space $(I, \mathcal{B}_I, \lambda)$ on the index space $I$, an application of the rich Fubini extension, originally introduced in \cite{SUN200631}, is necessary. In this subsection, we shall only provide some essential parts of the Fubini extension for a formal definition of the graphon particle system \eqref{eq : graphon particle system with common noise}, as similar graphon particle settings were developed in \cite{Aurell2022, nonlinear:graphon}.

\begin{defn}
	Consider a Polish space $S$ and two probability spaces $(\Omega, \mathcal{F}, \mathbb{P})$ and $(I, \mathcal{I}, \lambda)$. A process $X : \Omega \times I \to S$ is said to be \textit{essentially pairwise independent (e.p.i.)}, if for $\lambda$-a.e. $u \in I$ and $\lambda$-a.e. $v \in I$, the random variables $X_u : \omega \mapsto X_u(\omega)$ and $X_v : \omega \mapsto X_v(\omega)$ are independent. A probability space $(\Omega \times I, \mathcal{F} \boxtimes \mathcal{I}, \mathbb{P} \boxtimes \lambda)$, extending the usual product space $(\Omega \times I, \mathcal{F} \otimes \mathcal{I}, \mathbb{P} \otimes \lambda)$ is said to be a \textit{Fubini extension}, if for any $(\mathbb{P} \boxtimes \lambda)$-integrable function $X$ on $(\Omega \times I, \mathcal{F} \boxtimes \mathcal{I})$
	\begin{enumerate}[label=(\roman*)]
		\item the two functions $X_u : \omega \mapsto X_u(\omega)$ and $X(\omega) : u \mapsto X_u(\omega)$ are integrable on $(\Omega, \mathcal{F}, \mathbb{P})$ for $\lambda$-a.e. $u \in I$ and on $(I, \mathcal{I}, \lambda)$ for $\mathbb{P}$-a.e. $\omega \in \Omega$, respectively;
		\item $\int_{\Omega} X_u(\omega) d\mathbb{P}$ and $\int_I X_u(\omega) d\lambda(u)$ are integrable on $(I, \mathcal{I}, \lambda)$ and $(\Omega, \mathcal{F}, \mathbb{P})$, respectively, with $\int_{\Omega \times I} X d(\mathbb{P} \boxtimes \lambda) = \int_I (\int_{\Omega} X_u(\omega) d\mathbb{P}) d\lambda(u) = \int_{\Omega} (\int_I X_u(\omega) d\lambda(u) ) d\mathbb{P}$.
	\end{enumerate}
\end{defn}

The following result shows the existence of the Fubini extension.

\begin{lem} [Theorem 1 of \cite{sun2009individual}]    \label{lem: Fubini extension}
	Let $S$ be a Polish space and $(\Omega, \mathcal{F}, \mathbb{P})$ be a probability space. There exist a probability space $(I, \mathcal{I}, \lambda)$ extending the Lebesgue space $(I, \mathcal{B}_I, \lambda)$ and a Fubini extension $(\Omega \times I, \mathcal{F} \boxtimes \mathcal{I}, \mathbb{P} \boxtimes \lambda)$ such that for any measurable mapping $\phi$ from $(I, \mathcal{I}, \lambda)$ to $\mathcal{P}(S)$, there is a $(\mathcal{F} \boxtimes \mathcal{I})$-measurable process $X: \Omega \times I \to S$ such that the random variables $X_u$ are e.p.i. and $\mathbb{P} \circ X_u^{-1} = \phi(u)$ for $u \in I$.
\end{lem}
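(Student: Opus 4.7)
The plan is to follow the nonstandard-analysis construction of Sun, producing $(I, \mathcal{I}, \lambda)$ and the enlarged product $(\Omega \times I, \mathcal{F} \boxtimes \mathcal{I}, \mathbb{P} \boxtimes \lambda)$ via Loeb measure theory. This detour through nonstandard analysis is essentially necessary: a classical obstruction going back to Doob rules out any jointly $\mathcal{F} \otimes \mathcal{B}_I$-measurable, continuum-indexed family of nontrivial independent random variables on the standard Lebesgue product. One must therefore strictly enlarge both $\mathcal{B}_I$ and the product $\sigma$-algebra while preserving the Fubini interchange.

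For the index space, I would fix an unlimited hyperfinite integer $N$ and let $T_N := \{k/N : 1 \le k \le N\}$ carry the internal counting probability $\nu_N$. Passing to the Loeb space $(T_N, \mathcal{L}(\nu_N), L(\nu_N))$ and using the standard-part map $\mathrm{st}: T_N \to [0,1]$, I define
\begin{equation*}
    \mathcal{I} := \{ A \subset I : \mathrm{st}^{-1}(A) \in \mathcal{L}(\nu_N) \},
    \qquad
    \lambda(A) := L(\nu_N)(\mathrm{st}^{-1}(A)).
\end{equation*}
Anderson's theorem then gives $\lambda|_{\mathcal{B}_I} = \lambda_I$, so $(I, \mathcal{I}, \lambda)$ extends the Lebesgue space. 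For the Fubini extension itself, I would form the Loeb measure of the internal product ${}^*\mathbb{P} \otimes \nu_N$ on ${}^*\Omega \times T_N$ and pull it back to $\Omega \times I$ via the appropriate standard-part maps on both coordinates; the resulting $\sigma$-algebra $\mathcal{F} \boxtimes \mathcal{I}$ strictly contains $\mathcal{F} \otimes \mathcal{I}$, and the resulting measure $\mathbb{P} \boxtimes \lambda$ extends $\mathbb{P} \otimes \lambda$. Keisler's Fubini theorem for Loeb product measures then delivers axioms (i) and (ii) in the definition of a Fubini extension automatically.

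To construct $X$ from $\phi$, I would lift $\phi$ to an internal function $\Phi: T_N \to {}^*\mathcal{P}(S)$ with $\Phi(t) \approx \phi(\mathrm{st}(t))$ outside a Loeb-null set, using a standard lifting argument based on Anderson's theorem. On ${}^*\Omega$, possibly after an auxiliary internal enlargement of the sample space to create room, the internal transfer of the elementary construction of a finite independent family with prescribed marginals furnishes an internal process $Y: {}^*\Omega \times T_N \to {}^*S$ with internal law $\Phi(t)$ at each $t$ and internally independent across $t$. Setting $X_u(\omega) := \mathrm{st}(Y_t(\omega))$ for any $t$ with $\mathrm{st}(t) = u$ then yields an $(\mathcal{F} \boxtimes \mathcal{I})$-measurable process. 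The marginal identity $\mathbb{P} \circ X_u^{-1} = \phi(u)$ follows from the pushforward property of Loeb measures under the standard part, while essential pairwise independence follows from the fact that internal independence of a hyperfinite family transfers to Loeb independence for almost every pair of indices.

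The main obstacle is not the prescription of marginals or the verification of independence; once the hyperfinite skeleton is in place these are straightforward transfers of elementary finite-product facts. The real difficulty lies in the construction of the enlarged product space and in simultaneously verifying that (a) it strictly extends $(\Omega \times I, \mathcal{F} \otimes \mathcal{I}, \mathbb{P} \otimes \lambda)$ enough to accommodate joint measurability of an uncountable independent family, and (b) it still satisfies the iterated-integral identity. This compatibility is exactly what Keisler's Fubini theorem for Loeb products provides, but applying it requires careful handling of the two standard-part maps simultaneously and control of null sets arising from Anderson's liftings. These nonstandard-measure-theoretic inputs are the technical heart of the argument and the reason that the result cannot be obtained by purely classical means.
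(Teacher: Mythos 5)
The paper does not prove this lemma at all; it cites it as Theorem 1 of Aurell et al.\ and moves on, so there is no in-paper proof to compare against. What can be said is that the cited result rests on Sun's Loeb-measure construction, and your decision to route through nonstandard analysis is the right one: the hyperfinite index set $T_N$ with internal counting measure, the Loeb pushforward under the standard-part map to obtain $(I,\mathcal{I},\lambda)$ with $\lambda|_{\mathcal{B}_I}=\lambda_I$ via Anderson's theorem, the lifting of the measurable $\phi$ to an internal $\Phi$, the transfer of finite independence, and Keisler's Fubini theorem to get the iterated-integral identity are all the correct ingredients, and your remark that the classical Doob obstruction forces a strict enlargement is accurate.

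The place where the sketch breaks down is the sample-space side. You propose to ``form the Loeb measure of the internal product ${}^*\mathbb{P}\otimes\nu_N$ on ${}^*\Omega\times T_N$ and pull it back to $\Omega\times I$ via the appropriate standard-part maps on both coordinates.'' There is no standard-part map from ${}^*\Omega$ to an arbitrary abstract probability space $\Omega$: the standard-part construction needs a standard Hausdorff topological structure on $\Omega$ and a restriction to near-standard points, and even in that favorable case the pullback would only reproduce $\mathbb{P}$-measurable sets, not manufacture the new events in $\mathcal{F}\boxtimes\mathcal{I}\setminus\mathcal{F}\otimes\mathcal{I}$ that carry the joint measurability of the continuum of independent variables. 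In Sun's proof (and in the source theorem), the sample space is itself constructed as a hyperfinite Loeb space, and $\mathcal{F}\boxtimes\mathcal{I}$ is the Loeb $\sigma$-algebra of the internal product on the two hyperfinite spaces; nothing is pulled back from ${}^*\Omega$ to a pre-given $\Omega$. Your hedge ``possibly after an auxiliary internal enlargement of the sample space'' gestures at this but does not repair it, because once you replace $\Omega$ by an enlargement you are no longer proving the statement for the given $(\Omega,\mathcal{F},\mathbb{P})$. Relatedly, the lemma as written cannot hold for arbitrary given $(\Omega,\mathcal{F},\mathbb{P})$: if $\Omega$ is a singleton, every $X_u$ is deterministic and $\mathbb{P}\circ X_u^{-1}$ is a Dirac mass, contradicting the conclusion for any non-Dirac $\phi(u)$. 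In the cited source the quantifier on $\Omega$ is existential — $\Omega$ is produced by the theorem, not supplied as an input — so you should either build $\Omega$ from scratch as an atomless Loeb space (and drop the pullback step entirely in favor of working directly with the Loeb product) or add a richness hypothesis on $\Omega$; as written, the construction of the extended product $\sigma$-algebra over a fixed abstract $\Omega$ does not go through.
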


In order to construct a space containing the continuum of (jointly measurable) Brownian motions $\{B_u\}_{u \in I}$ and initial distributions $\{X_u(0)\}_{u \in I}$ that are independent (in the sense of e.p.i.) in the graphon system, 
let $\phi : I \to \mathcal{P}(\Cc^d \times \R^d)$ be the mapping defined by $\phi(u) = w_u \otimes \rho_u(0)$ with the Wiener measure $w_u$ on $\Cc^d$ and a $\mathcal{B}_I$-measurable (thus $\mathcal{I}$-measurable) function $\rho_\cdot(0) : I \to \mathcal{P}(\R^d)$. 
Then, from Lemma \ref{lem: Fubini extension}, there exists a $(\mathcal{F} \boxtimes \mathcal{I})$-measurable process 
$Z : \Omega \times I \to \Cc^d \times \R^d$, 
defined by $Z_u(\omega) = (B_u(\omega), \zeta_u(\omega))$ 
for $u \in I$, such that $(B_u, \zeta_u)_{u \in I}$ are e.p.i. random variables 
and the law of $(B_u, \zeta_u)$ is equal to $w_u \otimes \rho_u(0)$ for every $u \in I$.

Following the notations of \cite{nonlinear:graphon}, we shall denote $L^2_{\boxtimes}(\Omega \times I, \Cc^d)$ the space of equivalent classes of $(\mathcal{F} \boxtimes \mathcal{I}, \mathcal{B}(\Cc^d))$-measurable functions that are $(\mathbb{P} \boxtimes \lambda)$-square integrable, i.e., $\phi \in L^2_{\boxtimes}(\Omega \times I, \Cc^d)$ if 
\begin{equation*}
	\E^{\boxtimes} \bigg[\sup_{0 \le t \le T} |\phi(t)|^2\bigg] := \int_{\Omega \times I} \sup_{0 \le t \le T} |\phi_u(t, \omega)|^2 \mathbb{P} \boxtimes \lambda (d\omega, du) < \infty,
\end{equation*}
and $L^2_{\lambda}(I, \R^d)$ for the Hilbert space of $\lambda$-a.e. equivalent classes of $\lambda$-measurable functions $\varphi: I \ni u \mapsto \varphi_u \in \R^d$ such that $\int_I |\varphi_u|^2 \lambda (du) < \infty$.

We now consider for $d, m \in \N$
\begin{align*}
	&\bm{\zeta} : \Omega \times I \ni (\omega, u) \mapsto \zeta_u(\omega) \in \R^d,
	\\
	&\bm{B} : \Omega \times I \ni (\omega, u) \mapsto \big(B_u(t, \omega)\big)_{t \in [0, T]} \in \Cc^d,
	\\
	&W : \Omega \ni \omega \mapsto  \big(W(t, \omega)\big)_{t \in [0, T]} \in \Cc^m,
\end{align*}
and the filtration $\mathbb{F}$ generated by $\bm{\zeta}$, $\bm{B}$, and $W$. For a candidate solution $\bm{X}$ and for each $u \in I$, $t \in [0,T]$, we write
\[
    \mu_u(t) := \mathcal{L}\big(X_u(t)\mid W\big) = \mathcal{L}\big(X_u(t)\mid \{W(s)\}_{0\le s\le t}\big)
\]
for the conditional law of $X_u(t)$ given the common noise. Then, the graphon particle system \eqref{eq : graphon particle system with common noise} can be formally written as a stochastic differential equation in $L^2_{\lambda}(I, \R^d)$
\begin{align}
	d\bm{X}(t) = \bm{\phi}_p \big( \bm{X}(t), (\mu_u(t))_{u \in I} \big) dt &+ \bm{\phi}_b \big( \bm{X}(t), (\mu_u(t))_{u \in I} \big) d\bm{B}(t)           \label{eq : formal graphon system}
	\\
	&+ \bm{\phi}_w \big( \bm{X}(t), (\mu_u(t))_{u \in I} \big) dW(t), \qquad t \in [0, T],      \nonumber
\end{align}
with the initial distribution $\bm{X}(0) = \bm{\zeta}$ and 
\begin{align*}
	\bm{\phi}_p &= (\phi_{p, u})_{u \in I} : L^2_{\lambda}(I, \R^d) \times \mathcal{P}_2(\R^d)^I \to L^2_{\lambda}(I, \R^d),
	\\
	\bm{\phi}_b &= (\phi_{b, u})_{u \in I} : L^2_{\lambda}(I, \R^d) \times \mathcal{P}_2(\R^d)^I \to L^2_{\lambda}(I, \R^{d \times d}),
	\\
	\bm{\phi}_w &= (\phi_{w, u})_{u \in I} : L^2_{\lambda}(I, \R^d) \times \mathcal{P}_2(\R^d)^I \to L^2_{\lambda}(I, \R^{d \times m}),
\end{align*}
where for $\bm{x} = (x_u)_{u \in I} \in L^2_{\lambda}(I, \R^d)$ and $\bm{\mu} \in \mathcal{P}_2(\R^d)^I$ the functions are defined by
\begin{align*}
	\phi_{\alpha, u} (\bm{x}, \bm{\mu}) &= \phi_{\alpha} \Big(x_u, \int_I G_{u, v} \mu_v dv \Big),
	\qquad \text{for} \quad \alpha = p, b, w, \quad \text{and} \quad \lambda-\text{a.e.} \quad u \in I.
\end{align*}

A solution to \eqref{eq : formal graphon system} is defined by an $\mathbb{F}$-progressively measurable process $\bm{X} \in L^2_{\boxtimes}(\Omega \times I, \Cc^d)$ satisfying for $u \in I$ and $(\mathbb{P} \boxtimes \lambda)$-a.e.
\begin{align*}
	X_u(t) = \zeta_u &+ \int_0^t \phi_{p, u} \big(\bm{X}(s), (\mu_u(s))_{u \in I} \big) ds + \int_0^t \phi_{b, u} \big(\bm{X}(s), (\mu_u(s))_{u \in I} \big) dB_u(s)
	\\
	&+ \int_0^t \phi_{w, u} \big(\bm{X}(s), (\mu_u(s))_{u \in I} \big) dW(s), \qquad \qquad t \in [0, T].
\end{align*}
We refer to Section 3 and Appendix A of \cite{nonlinear:graphon} for more details about the formal formulation of the graphon system.

\subsection{Notations and assumptions}
For any Polish space $(E,d)$, we denote by $C_b(E)$ the space of bounded continuous functions on $E$, and define $\langle f,\mu\rangle := \int_E f(x)\, \mu(dx)$ for any $f \in C_b(E)$, positive measure $\mu \in \Mc(E)$.

We first recall the graphon particle system \eqref{eq : graphon particle system with common noise} with its approximating $n$-particle system \eqref{eq : finite particle system with common noise}, and the following measures in $\mathcal{P}(\mathcal{C}^d)$:
\begin{equation}    \label{def: measures}
	\mu^n := \frac{1}{n} \sum_{i=1}^n \delta_{X^n_i}, \qquad
	\bar{\mu}^n := \frac{1}{n} \sum_{i=1}^n \delta_{X_{i/n}}, \qquad
	\mu_u := \mathcal{L}[X_u | W], \qquad
	\bar{\mu} := \int_0^1 \mu_u du,
\end{equation}
such that for each $t \in [0, T]$ we have the measures in $\mathcal{P}(\R^d)$
\begin{align*}
	\mu^n(t) := \frac{1}{n} \sum_{i=1}^n \delta_{X^n_i(t)}, \qquad 
	&\bar{\mu}^n(t) := \frac{1}{n} \sum_{i=1}^n \delta_{X_{i/n}(t)},
	\\
	\mu_u(t) := \mathcal{L}[X_u(t) | W], \qquad
	&\bar{\mu}(t) := \int_0^1 \mu_u(t) du.
\end{align*}
Note that $\mathcal{L}[X_u(t) | W] = \mathcal{L}[X_u(t) | \{W_s\}_{s \in [0,T]}] = \mathcal{L}[X_u(t) | \{W_s\}_{s \in [0,t]}]$.
Here and in what follows, the integral of measure should also be understood as a measure, as
\begin{align*}
	\bar{\mu}(A) &:= \int_0^1 \mu_u(A)du, \qquad \forall \, A \in \mathcal{B}(\Cc^d),
	\\
	\bar{\mu}(t)(A) &:= \int_0^1 \big(\mu_u(t)(A)\big) du, \qquad \forall \, A \in \mathcal{B}(\R^d).
\end{align*}

To simplify notations, we shall also write the graphon-weighted interaction measures
\begin{equation}    \label{def: simple notations}
	\mu_u^G := \int_0^1 G_{u,v}\mu_v\,dv, \qquad \mu_u^G(t) := \int_0^1 G_{u,v}\mu_v(t)\,dv,
	\qquad u \in [0,1], ~ t \in [0,T].
\end{equation}

We now list the following conditions that we impose on the two systems \eqref{eq : graphon particle system with common noise} and \eqref{eq : finite particle system with common noise}.

\begin{assumption}  \label{ass:Lipschitz}
	In the two systems \eqref{eq : graphon particle system with common noise} and \eqref{eq : finite particle system with common noise}, we assume that
	\begin{enumerate} [label=(\roman*)]
		\item the functions $\phi_{\alpha}$ for $\alpha = p, b, w$ are bounded and Lipschitz continuous, i.e., there exists some constant $L > 0$ such that for all $x_1, x_2 \in \R^d$ and $\mu_1, \mu_2 \in \mathcal{M}_{2,K}(\R^d)$,
		\begin{equation*}
			|\phi_\alpha(x_1, \mu_1) - \phi_\alpha(x_2, \mu_2)| \le L \Big(\max\{m_{\mu_1},m_{\mu_2},1\}|x_1 - x_2| + \Wop_2(\mu_1, \mu_2) \Big),
		\end{equation*}
		
		\item the mapping $[0, 1] \ni u \mapsto \mathcal{L}\big(X_u(0)\big) \in \mathcal{P}(\R^d)$ is measurable, and the initial distribution has uniformly bounded $(2+\eps)$ moment for some $\eps > 0$:
		\begin{equation*}
			\sup_{u \in [0, 1]} \E|X_u(0)|^{2 + \eps} < \infty.
		\end{equation*}
	\end{enumerate}
\end{assumption}

Lemma \ref{lem: unique pathwise solution} establishes well-posedness of the system \eqref{eq : graphon particle system with common noise}. It can be proven by a standard fixed-point (contraction) argument, so we omit the proof. Instead, we refer to the proof of Proposition 3.1 of \cite{nonlinear:graphon}, as adding a common noise does not change much of this proof.

\begin{lem} \label{lem: unique pathwise solution}
	Under Assumption \ref{ass:Lipschitz}, there exists a unique pathwise solution $\{X_u\}_{u\in[0, 1]}$ to the graphon particle system \eqref{eq : graphon particle system with common noise} with common noise. Furthermore, $\sup_{u \in [0, 1]} \E \Vert X_u \Vert^{2+\eps}_{*, T} < \infty$ holds, and the mapping $[0, 1] \ni u \mapsto \mathcal{L}(X_u) \in \mathcal{P}_2(\mathcal{C}^d)$ is measurable.
\end{lem}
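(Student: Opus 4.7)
The plan is to adapt the Picard/contraction-mapping proof of Proposition 3.1 in \cite{nonlinear:graphon} to the presence of the common noise $W$, where the natural object is the conditional law $\mu_u(t) = \Lc[X_u(t) \mid W]$ rather than the unconditional one, and where the interaction measures take values in $\Mc_{2,K}(\R^d)$ rather than in $\Pc_2(\R^d)$. Fix $\beta > 0$ (to be chosen large) and work on the Banach space $\mathscr{H}_T$ of $\F$-progressively measurable processes $\bm{Y} \in L^2_{\boxtimes}(\Omega \times I, \Cc^d)$ equipped with the norm
\begin{equation*}
    \|\bm{Y}\|_\beta^2 := \sup_{t \in [0,T]} e^{-\beta t}\, \E^{\boxtimes}\|\bm{Y}\|^2_{*,t}.
\end{equation*}
Given $\bm{Y} \in \mathscr{H}_T$, define the $\sigma(W)$-measurable interaction measures $\nu^Y_u(t) := \int_I G_{u,v}\,\mu^Y_v(t)\,dv$ with $\mu^Y_v(t) := \Lc[Y_v(t) \mid W]$, whose fixed total mass is $m_u := \|G(u,\cdot)\|_1 \in [0,1]$, and let $\Phi(\bm{Y}) =: \bm{X}$ be the $\F$-progressively measurable process solving \eqref{eq : graphon particle system with common noise} with $\int_0^1 G_{u,v}\mu_v(t)dv$ replaced by $\nu^Y_u(t)$. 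Because $\nu^Y_u$ is bounded and $\F$-adapted and $\phi_\alpha$ is Lipschitz in the state variable, classical SDE theory combined with the Fubini extension yields a unique jointly measurable $\bm{X} \in \mathscr{H}_T$ satisfying $\sup_{u \in [0,1]} \E \|X_u\|^{2+\eps}_{*,T} < \infty$ by the boundedness of $\phi_\alpha$, BDG, and Gronwall, using the $(2+\eps)$-moment assumption on $X_u(0)$.

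The core step is showing that $\Phi$ is a contraction on $(\mathscr{H}_T, \|\cdot\|_\beta)$. For $\bm{Y}, \bm{Y}' \in \mathscr{H}_T$ with images $\bm{X}, \bm{X}'$, It\^o's formula applied to $|X_u(t) - X'_u(t)|^2$, BDG, and Assumption~\ref{ass:Lipschitz}(i) give
\begin{equation*}
    \E\|X_u - X'_u\|^2_{*,t}
    \;\le\; C\!\int_0^t \E\,\Wop_2^2\bigl(\nu^Y_u(s), \nu^{Y'}_u(s)\bigr)\,ds
    \;+\; C\!\int_0^t \E\|X_u - X'_u\|^2_{*,s}\,ds.
\end{equation*}
Since $\nu^Y_u(s)$ and $\nu^{Y'}_u(s)$ share the mass $m_u$, identity~\eqref{eq: WOP} reduces $\Wop_2^2$ to $m_u^2\,\Wc_2^2(\bar{\nu}^Y_u(s), \bar{\nu}^{Y'}_u(s))$; convexity of $\Wc_2^2$ applied to the mixture representation, followed by the conditional coupling bound $\Wc_2^2(\mu^Y_v(s), \mu^{Y'}_v(s)) \le \E[\,|Y_v(s) - Y'_v(s)|^2 \mid W\,]$ a.s., yields
\begin{equation*}
    \E\,\Wop_2^2\bigl(\nu^Y_u(s), \nu^{Y'}_u(s)\bigr)
    \;\le\; m_u\!\int_I G_{u,v}\,\E|Y_v(s) - Y'_v(s)|^2\,dv.
\end{equation*}
Integrating in $u$ and using $m_u, G_{u,v} \in [0,1]$, a Fubini argument on the extension gives $\int_I \E\,\Wop_2^2(\nu^Y_u(s), \nu^{Y'}_u(s))\,d\lambda(u) \le \E^{\boxtimes}\|Y - Y'\|^2_{*,s}$. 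Multiplying the resulting Gronwall-type inequality by $e^{-\beta t}$ and taking the supremum in $t$ makes $\Phi$ a strict contraction for $\beta$ large enough, and the Banach fixed-point theorem yields the unique $\bm{X} \in \mathscr{H}_T$ solving \eqref{eq : formal graphon system}.

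Finally, the uniform moment bound on the fixed point is obtained by It\^o's formula applied to $|X_u|^{2+\eps}$, combined with the boundedness of $\phi_\alpha$, BDG, and Gronwall applied uniformly in $u$. Measurability of $u \mapsto \Lc(X_u) \in \Pc_2(\Cc^d)$ is inherited from the joint $(\mathcal{F} \boxtimes \mathcal{I})$-measurability of $\bm{X}$, which is preserved through the Picard iteration starting from measurable initial data, together with the fact that $\Pc_2(\Cc^d)$ is Polish. The main obstacle, which is precisely the novelty over \cite{nonlinear:graphon}, is the simultaneous handling of conditional laws and of positive (not probability) interaction measures: one must combine the conditional coupling bound $\Wc_2^2(\Lc[Y_v \mid W], \Lc[Y'_v \mid W]) \le \E[\|Y_v - Y'_v\|^2_{*,T} \mid W]$ with the $\Wop_2$ distance from Subsection~\ref{subsec: Wop}, and exploit the mass-preservation identity~\eqref{eq: WOP} to reduce $\Wop_2$ to $\Wc_2$ at each contraction step; once these pieces are aligned, the remainder is a routine fixed-point argument.
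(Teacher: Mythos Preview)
Your proposal is correct and follows exactly the approach the paper indicates: the paper in fact omits the proof entirely, stating only that ``it can be proven by a standard fixed-point (contraction) argument'' and referring to Proposition~3.1 of \cite{nonlinear:graphon} with the remark that ``adding a common noise does not change much of this proof.'' Your sketch is a faithful and careful fleshing-out of that argument, including the two genuinely new ingredients (conditional laws given $W$ and the reduction of $\Wop_2$ to $\Wc_2$ via the equal-mass identity \eqref{eq: WOP}), so there is nothing to add.
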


The following additional assumptions are necessary when proving the $\Wc_{2}$-uniform continuity in $u$ of the conditional law $\mu_u$, defined in \eqref{def: measures} (see Lemma \ref{lem: continuity of average measure} below).

\begin{assumption}\label{ass: G Lipschitz}
	Regarding the two systems \eqref{eq : graphon particle system with common noise} and \eqref{eq : finite particle system with common noise}, we shall assume one of the following conditions on the graphon $G$. The first condition is stronger than the second one.
	\begin{enumerate}[label=(\roman*)]
		\item $G$ is Lipschitz continuous in the sense that for each $v \in [0,1]$ there exists some $L_v$ satisfying
		\begin{equation*}
			|G_{u_1,v} - G_{u_2,v}|
			~ \le ~
			L_v|u_1 - u_2|,
		\end{equation*}
		and $v \longmapsto L_v$ is $L^2$-integrable, i.e.,
		$\int_{0}^{1} |L_v|^2 dv < \infty$;
		\item $G$ is continuous on its domain $[0, 1] \times [0, 1]$.
	\end{enumerate}
\end{assumption}

\begin{assumption}  \label{ass: initial Wasserstein distance}
	In the two systems \eqref{eq : graphon particle system with common noise} and \eqref{eq : finite particle system with common noise}, we assume that the mapping $u \longmapsto \mu_u(0)$ satisfies the following continuity in $\Wc_{2+\eps}$ distance for some $\eps > 0$:
	\begin{equation}    \label{initial condition}
		\E\big[\Wc^{2+\eps}_{2+\eps} \big(\mu_{u_1}(0), \mu_{u_2}(0)\big)\big] \le \kappa |u_1-u_2|^{1 + \frac{\eps}{2}}, \qquad \forall \, u_1, u_2 \in [0, 1].
	\end{equation}
\end{assumption}

\begin{rem}
	The value of the constant $\epsilon$ in Assumptions \ref{ass:Lipschitz} and \ref{ass: initial Wasserstein distance} can be different, but we use the same symbol for simplicity. Moreover, the assumption \eqref{initial condition} can be relaxed to $\eps=0$ in the linear case (see Appendix \ref{sec: linear case}).
\end{rem}

\begin{assumption}\label{ass:G^n_and_G}
	For the graphons $G$ and $G^n$ in the two systems \eqref{eq : graphon particle system with common noise} and \eqref{eq : finite particle system with common noise}, we assume
	\begin{align*}
		\lim_{n \to \infty}\|G- G^n\|_\square
		~ = ~
		0,
	\end{align*}
	where the cut norm $\|\cdot\|_\square$ of a graphon is defined by 
	$$\|G\|_\square := \sup_{S,T \in \mathcal{B}(I)}\Bigg|\iint_{S \x T} G_{u,v}\,du\,dv\Bigg|.$$
	Here, $\mathcal{B}(I)$ is the set of all Borel measurable subsets of $[0, 1]$.
\end{assumption}

\bigskip

\section{Results}

This section states our main results regarding the two systems \eqref{eq : graphon particle system with common noise} and \eqref{eq : finite particle system with common noise}. Their proofs will be given in the next section.

\subsection{Preliminaries}

Before we state and prove our main theorems, we provide some preliminary results in this subsection. First, we show that the conditional law $\mu_u$ has a modification that is $\Wc_2$-uniformly continuous in $u$.

\begin{defn}    \label{Def: spatial modi}
	For the family of $\mathcal{P}(\mathcal{C}^d)$-valued random variables $\{\mu_u\}_{u \in [0, 1]}$ in \eqref{def: measures}, a family $\{\tilde{\mu}_u\}_{u \in [0, 1]}$ of $\mathcal{P}(\mathcal{C}^d)$-valued random variables satisfying
	\[
		\P\big(\mu_u = \tilde{\mu}_u\big) = 1, \qquad \forall\, u \in [0,1],
	\]
	is called a \textit{spatial modification} of $\{\mu_u\}_{u \in [0, 1]}$.
\end{defn}

\begin{lem} \label{lem: basic coefficient estimate}
	Let $q \in \{2,\,2+\eps\}$. Under Assumption~\ref{ass:Lipschitz}, for each $\alpha \in \{p,b,w\}$, all $x_1,x_2 \in \R^d$, and $\mu_1,\mu_2 \in \Mc_{2,K}(\R^d)$, there exists a constant $C_q > 0$ such that
	\begin{equation*}
		\big|\phi_\alpha(x_1,\mu_1)-\phi_\alpha(x_2,\mu_2)\big|^q
        \le C_q\Big( (1 \vee m_{\mu_1} \vee m_{\mu_2})^q |x_1-x_2|^q + \Wop_2^q(\mu_1,\mu_2) \Big).
	\end{equation*}
	In particular, if $m_{\mu_1},m_{\mu_2}\le 1$, then
	\begin{equation*}
		\big|\phi_\alpha(x_1,\mu_1)-\phi_\alpha(x_2,\mu_2)\big|^q
		\le C_q\Big( |x_1-x_2|^q+\Wop_2^q(\mu_1,\mu_2) \Big).
	\end{equation*}
\end{lem}

\begin{proof}
	The result follows directly from Assumption \ref{ass:Lipschitz} and $(a+b)^q \le 2^{q-1}(a^q+b^q)$.
\end{proof}

\begin{lem} \label{lem: continuity of average measure}
	If Assumptions \ref{ass:Lipschitz} and \ref{ass: G Lipschitz} (i) hold true, then the family $\{\mu_u\}_{u \in [0, 1]}$ of $\mathcal{P}_2(\mathcal{C}^d)$-valued random variables has a spatial modification $\{\tilde{\mu}_u\}_{u \in [0, 1]}$ and there exists an event $\Omega_0 \in \mathcal{F}$ with $\P(\Omega_0)=1$ such that, for every $\omega \in \Omega_0$, the mapping
	\[
		[0,1] \ni u \longmapsto \tilde{\mu}_u(\omega) \in \mathcal{P}_2(\mathcal{C}^d)
	\]
	is $\gamma$-H\"older continuous in $\Wc_2$ for $\gamma < \frac{\epsilon}{2(2+\epsilon)}$. In particular, for every $\omega \in \Omega_0$, the mapping $u \longmapsto \tilde{\mu}_u(\omega)$ is uniformly continuous in $\Wc_2$.
\end{lem}

The proof of Lemma \ref{lem: continuity of average measure} can be found in Appendix \ref{sec: proofs}.

\begin{rem}
	Throughout this paper, thanks to Lemma \ref{lem: continuity of average measure}, we shall abuse the notation by denoting the spatially modified version $\tilde{\mu}_u$ simply as $\mu_u$. Thus, almost surely, the mapping $u \longmapsto \mu_u(\omega)$ is $\mathcal{W}_2$-uniformly continuous in $u$.
\end{rem}

The following result provides several equivalent conditions for the $\mathcal{W}_2$-convergence of a sequence of probability measures. Since it will be used in the proofs of results in subsequent sections, we include it here for completeness.

\begin{lem}[Theorem 7.12 of \cite{villani}] \label{lem : villani}
	For a Polish space $X$, endowed with a distance $d$, let $(\nu_n)_{n \in \N}$ be a sequence of probability measures in $\mathcal{P}_2(X)$ and let $\nu \in \mathcal{P}(X)$. Then, the following statements are equivalent:
	\begin{enumerate}[label=(\roman*)]
		\item $\Wc_2(\nu_n, \nu) \xlongrightarrow{n \to \infty} 0$.
		\item $\nu_n \xlongrightarrow{n \to \infty} \nu$ in weak sense, and $(\nu_n)_{n \in \N}$ satisfies the tightness condition: for some $x_0 \in X$, 
		\[
		\lim_{a \to \infty} \limsup_{n \to \infty} \int_{d(x_0, x) \ge a} d(x_0, x)^2 d\nu_n(x) = 0.
		\]
		\item $\nu_n \xlongrightarrow{n \to \infty} \nu$ in a weak sense, and there is a convergence of the moment of order $2$: for some $x_0 \in X$,
		\[
		\int d(x_0, x)^2 d\nu_n(x) \xlongrightarrow{n \to \infty} \int d(x_0, x)^2 d\nu(x).
		\]
		\item For any continuous function $\varphi$ on $X$ satisfying the condition $|\varphi(x)| \le C[1+d(x_0, x)^2]$ for some $x_0 \in X$, 
		\[
		\int \varphi \, d\nu_n \xlongrightarrow{n \to \infty} \int \varphi \, d\nu.
		\]
	\end{enumerate}
\end{lem}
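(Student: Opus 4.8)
This is Theorem 7.12 of \cite{villani}; the route I would follow is to close the cycle $(i)\Rightarrow(iii)\Rightarrow(ii)\Rightarrow(i)$ together with $(iii)\Leftrightarrow(iv)$, with the Skorokhod representation theorem carrying the essential content. The implications $(i)\Rightarrow(iii)$, $(iii)\Leftrightarrow(ii)$, and $(iii)\Leftrightarrow(iv)$ are the soft part. For $(i)\Rightarrow(iii)$: $\Wc_2$-convergence implies $\Wc_1$-convergence, hence weak convergence; and from the identity $\Wc_2(\mu,\delta_{x_0})^2 = \int d(x_0,x)^2\,d\mu(x)$ together with the triangle inequality one gets $\big|\Wc_2(\nu_n,\delta_{x_0}) - \Wc_2(\nu,\delta_{x_0})\big| \le \Wc_2(\nu_n,\nu)\to 0$, i.e.\ convergence of the second moments.

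For $(iii)\Leftrightarrow(ii)$ one uses that, under weak convergence, $\int (d(x_0,x)^2\wedge a)\,d\nu_n \to \int(d(x_0,x)^2\wedge a)\,d\nu$ since the integrand is bounded and continuous; splitting $d(x_0,x)^2 = (d(x_0,x)^2\wedge a) + (d(x_0,x)^2 - a)^+$ and letting first $n\to\infty$ then $a\to\infty$ shows that the tail condition and moment convergence are equivalent, the Portmanteau lower bound handling the $\liminf$. For $(iii)\Leftrightarrow(iv)$: substituting $\varphi\in C_b(X)$ and $\varphi = d(x_0,\cdot)^2$ into $(iv)$ yields $(iii)$; conversely, for $\varphi$ with $|\varphi(x)|\le C[1+d(x_0,x)^2]$, pick a continuous cutoff $\chi_R$ that equals $1$ on $\{d(x_0,\cdot)\le R\}$ and vanishes off $\{d(x_0,\cdot)\le 2R\}$, write $\int \varphi\,d\nu_n = \int\varphi\chi_R\,d\nu_n + \int\varphi(1-\chi_R)\,d\nu_n$, pass to the limit in the first term by weak convergence, and bound the second by $C\int_{d(x_0,x)\ge R}[1+d(x_0,x)^2]\,d\nu_n$, which is uniformly small for $R$ large by the uniform integrability furnished by $(ii)$.

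The heart of the matter is $(ii)\Rightarrow(i)$. Since $\nu_n\to\nu$ weakly on the Polish space $X$, the Skorokhod representation theorem yields random variables $Y_n\sim\nu_n$, $Y\sim\nu$ on a common probability space with $Y_n\to Y$ almost surely, so $d(Y_n,Y)^2\to 0$ a.s. The family $\{d(Y_n,Y)^2\}_n$ is uniformly integrable: $d(Y_n,Y)^2 \le 2d(x_0,Y_n)^2 + 2d(x_0,Y)^2$, the term $d(x_0,Y)^2$ is a fixed integrable random variable ($\nu\in\Pc_2(X)$ following from the tail condition), and $\{d(x_0,Y_n)^2\}_n$ is uniformly integrable exactly by that tail condition. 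Vitali's convergence theorem then gives $\E[d(Y_n,Y)^2]\to 0$, and since $(Y_n,Y)$ is a coupling of $\nu_n$ and $\nu$ we conclude $\Wc_2^2(\nu_n,\nu)\le\E[d(Y_n,Y)^2]\to 0$. I expect this implication to be the only genuine obstacle, because weak convergence alone controls neither the second moments nor the escape of mass to infinity, so one must manufacture an explicit coupling (Skorokhod) and feed in the uniform-integrability hypothesis (Vitali); everything else is truncation and moment bookkeeping. Since the lemma is quoted verbatim, in the paper I would simply cite \cite{villani} rather than reproduce the argument.
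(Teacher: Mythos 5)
The paper does not prove this lemma; it is quoted as Theorem~7.12 of \cite{villani} and used as a black box, which you yourself note is the appropriate treatment. Your sketch is correct: the cycle $(i)\Rightarrow(iii)\Leftrightarrow(ii)\Rightarrow(i)$ plus $(iii)\Leftrightarrow(iv)$ is complete, the soft implications are handled by the right truncation and Portmanteau arguments, and you correctly observe that $\nu\in\Pc_2(X)$ follows from the tail condition via lower semicontinuity of $d(x_0,\cdot)^2$ under weak convergence, which is needed both to make $(i)$ meaningful and to integrate $d(x_0,Y)^2$. The one place your route departs from Villani's book proof is the hard direction $(ii)\Rightarrow(i)$: you manufacture an a.s.\ coupling via Skorokhod representation and close with Vitali, whereas Villani works directly with (near-)optimal transport plans and splits the cost integral into a near-diagonal piece, a bounded off-diagonal piece controlled by weak convergence, and a tail piece controlled by the uniform integrability of second moments. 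Both are standard; Skorokhod is cleaner to state, the transport-plan argument avoids invoking almost sure representation and is more self-contained within optimal transport. Since the lemma is cited verbatim and not reproved in the paper, there is nothing to reconcile beyond confirming your sketch is sound.
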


\subsection{Convergence within the graphon system}
This subsection is devoted to convergence results within the graphon mean-field system \eqref{eq : graphon particle system with common noise}, where dynamics in finite particle systems are not involved. In particular, the convergences of the empirical measures $\{\frac{1}{n} \sum_{i=1}^n \delta_{X_{i/n}}\}_{n \in \N}$ in $\Wc_2$, and the weighted empirical measures $\{\frac{1}{n} \sum_{i=1}^n G^n_{j,i}\delta_{X_{i/n}}\}_{n \in \N}$ in $\Wop_2$, respectively, are established. 

By recalling the measures in $\mathcal{P}(\mathcal{C}^d)$ of \eqref{def: measures}, we shall consider joint laws $\mathcal{L} (\bar{\mu}^n, \bar{\mu})$ and $\mathcal{L} (\bar{\mu}, \bar{\mu})$ on $\mathcal{P}(\mathcal{C}^d) \x \mathcal{P}(\mathcal{C}^d)$, instead of $\mathcal{L} (\bar{\mu}^n)$, $\mathcal{L} (\bar{\mu})$ on $\mathcal{P}(\mathcal{C}^d)$:
\begin{align}
	\mathcal{L} (\bar{\mu}^n, \bar{\mu}) (A) &:= \P [ (\bar{\mu}^n, \bar{\mu}) \in A], \qquad \forall \, A \in \mathcal{B}\big( \mathcal{P}(\mathcal{C}^d) \x \mathcal{P}(\mathcal{C}^d)\big),
	\\
	\mathcal{L} (\bar{\mu}, \bar{\mu}) (A) &:= \P [ (\bar{\mu}, \bar{\mu}) \in A], \qquad \quad \forall \, A \in \mathcal{B}\big( \mathcal{P}(\mathcal{C}^d) \x \mathcal{P}(\mathcal{C}^d)\big).
\end{align}

The following convergence \eqref{conv: W2 ppcd} will play a central role in proving subsequent results, and its proof is provided in Appendix \ref{sec: proofs}.

\begin{lem} \label{lem : W2 joint law conv}
	Under Assumptions \ref{ass:Lipschitz} and \ref{ass: G Lipschitz} (i), we have
	\begin{equation}    \label{conv: W2 ppcd}
		\lim_{n \to \infty} \W_2 \big( \Lc(\bar{\mu}^n,\bar{\mu}), \, \Lc(\bar{\mu},\bar{\mu}) \big) = 0.
	\end{equation}
\end{lem}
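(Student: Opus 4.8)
The plan is to prove the convergence $\W_2(\Lc(\bar\mu^n,\bar\mu),\Lc(\bar\mu,\bar\mu))\to 0$ on the space $\Pc(\Cc^d)\times\Pc(\Cc^d)$ by appealing to Lemma~\ref{lem : villani}: it suffices to show (a) $\Lc(\bar\mu^n,\bar\mu)\to\Lc(\bar\mu,\bar\mu)$ weakly in $\Pc\big(\Pc(\Cc^d)\times\Pc(\Cc^d)\big)$, and (b) a uniform-integrability / moment-convergence condition for the second moments with respect to a suitable metric on $\Pc(\Cc^d)\times\Pc(\Cc^d)$ (for instance the $\W_2\vee\W_2$ or $\ell_2$-combination of the two $\W_2$-distances). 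The second coordinate is deterministic (it is always $\bar\mu$), so the content is entirely in the first coordinate $\bar\mu^n=\frac1n\sum_{i=1}^n\delta_{X_{i/n}}$, and what we must really show is that $\bar\mu^n\to\bar\mu$ in $\W_2(\Pc(\Cc^d))$ in probability, together with convergence of $\E[\,\W_2^2(\bar\mu^n,\delta_{x_0})\,]$ to $\E[\,\W_2^2(\bar\mu,\delta_{x_0})\,]$ — since once $\bar\mu^n\to\bar\mu$ in probability as $\Pc(\Cc^d)$-valued random variables and the relevant second moments converge, the law $\Lc(\bar\mu^n)$ converges to $\Lc(\bar\mu)$ in $\W_2$ on $\Pc_2(\Pc(\Cc^d))$, and the joint statement with the deterministic second coordinate is immediate.

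First I would establish the in-probability convergence $\bar\mu^n\to\bar\mu$. Condition on the common noise $W$. Given $W$, the variables $\{X_{i/n}\}$ are, by the e.p.i.\ structure of the Fubini extension, conditionally essentially pairwise independent with $\Lc[X_{i/n}\mid W]=\mu_{i/n}$. Hence $\bar\mu^n=\frac1n\sum_i\delta_{X_{i/n}}$ has conditional mean measure $\frac1n\sum_i\mu_{i/n}$, which by the $\W_2$-uniform continuity of $u\mapsto\mu_u$ (Lemma~\ref{lem: continuity of average measure}, i.e.\ a Riemann-sum argument) converges to $\bar\mu=\int_0^1\mu_u\,du$ in $\W_2$ as $n\to\infty$, uniformly in $\omega$. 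For the fluctuation of $\bar\mu^n$ around its conditional mean, I would use a conditional law-of-large-numbers for empirical measures of pairwise-independent (non-i.i.d., triangular-array) samples: testing against bounded Lipschitz functions $f$, $\E[(\langle f,\bar\mu^n\rangle-\langle f,\frac1n\sum_i\mu_{i/n}\rangle)^2\mid W]=\frac1{n^2}\sum_i\mathrm{Var}(f(X_{i/n})\mid W)=O(1/n)$, using only pairwise independence to kill the cross terms; combined with a countable convergence-determining family this gives conditional weak convergence $\bar\mu^n\to\bar\mu$ a.s., and the uniform $(2+\eps)$-moment bound $\sup_u\E\|X_u\|_{*,T}^{2+\eps}<\infty$ from Lemma~\ref{lem: unique pathwise solution} upgrades this to conditional $\W_2$-convergence via Lemma~\ref{lem : villani}(ii). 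Taking expectations (dominated convergence, using the uniform moment bound) yields $\E[\W_2^2(\bar\mu^n,\bar\mu)]\to 0$, which is even stronger than in-probability convergence and subsumes the moment-convergence requirement for the final $\W_2$ statement on laws.

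Finally I would assemble the pieces: from $\E[\W_2^2(\bar\mu^n,\bar\mu)]\to0$, the pair $(\bar\mu^n,\bar\mu)$ converges to $(\bar\mu,\bar\mu)$ in $L^2$ for the product metric $\big(\W_2^2+\W_2^2\big)^{1/2}$ on $\Pc(\Cc^d)\times\Pc(\Cc^d)$ (the second coordinate contributing nothing), and a random variable converging in $L^2$ to a limit has its law converging in $\W_2$; concretely, $\W_2^2\big(\Lc(\bar\mu^n,\bar\mu),\Lc(\bar\mu,\bar\mu)\big)\le\E\big[\W_2^2(\bar\mu^n,\bar\mu)+\W_2^2(\bar\mu,\bar\mu)\big]=\E[\W_2^2(\bar\mu^n,\bar\mu)]\to0$, using the identity coupling between the two laws. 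The main obstacle is the fluctuation estimate under only \emph{pairwise} (e.p.i.) conditional independence rather than full independence: one must be careful that the second-moment computation genuinely requires only pairwise independence — which it does, since $\E[(\sum_i Y_i)^2]=\sum_i\E[Y_i^2]+\sum_{i\ne j}\E[Y_i Y_j]$ and the off-diagonal terms vanish — and that measurability in $\omega$ of all conditional objects is legitimate within the Fubini extension, so that conditioning on $W$ and then integrating is justified; handling the triangular-array (the sample points $X_{1/n},\dots,X_{n/n}$ change with $n$ and are not a subsequence of a fixed sequence) via a Riemann-sum/uniform-continuity argument for the mean measures is the other point needing care, but this is exactly what Lemma~\ref{lem: continuity of average measure} is designed to supply.
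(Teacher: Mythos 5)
Your proposal is correct in substance but reverses the logical order of the paper's argument, which is a genuinely different route. The paper works directly at the level of joint laws in $\Pc\big(\Pc(\Cc^d)\times\Pc(\Cc^d)\big)$: it establishes tightness (via Aldous's criterion applied to $\frac1n\sum_i\Lc(X_{i/n})$), weak convergence (by computing moments of products $\prod_i\langle h_i,\bar\mu^n\rangle\langle f,\bar\mu\rangle$ and exploiting the conditional independence given $W$), and uniform integrability (via Markov and the $(2+\eps)$-moment bound), then invokes Lemma~\ref{lem : villani}(ii) to conclude $\W_2$-convergence of the joint laws; the $L^2$-estimate $\E[\W_2^2(\bar\mu^n,\bar\mu)]\to0$ of Theorem~\ref{thm: bar mu_n to bar mu} is subsequently derived \emph{from} the lemma via Lemma~\ref{lem : villani}(iv). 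You instead propose to prove the $L^2$-estimate first—decomposing $\bar\mu^n$ into its conditional mean $\frac1n\sum_i\mu_{i/n}$ (handled by a Riemann-sum argument with Lemma~\ref{lem: continuity of average measure}) plus fluctuations controlled by the conditional pairwise independence—and then deduce the lemma from the coupling inequality $\W_2^2\big(\Lc(\bar\mu^n,\bar\mu),\Lc(\bar\mu,\bar\mu)\big)\le\E[\W_2^2(\bar\mu^n,\bar\mu)]$. This is valid and arguably more direct: it bypasses the explicit tightness argument and the product-moment computation, and automatically yields Theorem~\ref{thm: bar mu_n to bar mu} along the way. The trade-off is that converting the test-function variance bounds into convergence of $\W_2^2(\bar\mu^n,\bar\mu)$ requires a subsequence (or convergence-in-probability) argument that the paper sidesteps by working with laws and the Villani characterization throughout. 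One technical caution: the $O(1/n)$ conditional variance bound gives convergence in probability of $\langle f,\bar\mu^n\rangle$ (or a.s.\ along sparse subsequences via Borel--Cantelli), not a.s.\ along the full sequence as you state; you should phrase this step as convergence in probability, which together with the uniform integrability supplied by $\sup_u\E\|X_u\|_{*,T}^{2+\eps}<\infty$ already gives the needed $L^1$-convergence of $\W_2^2(\bar\mu^n,\bar\mu)$ without ever establishing a.s.\ convergence.
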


With the aid of Lemma \ref{lem : W2 joint law conv}, the following convergence of $\bar{\mu}^n$ to $\bar{\mu}$ can be established without difficulty. 

\begin{prop} \label{thm: bar mu_n to bar mu}
	Under Assumptions \ref{ass:Lipschitz} and \ref{ass: G Lipschitz} (i), we have
	\begin{equation}  \label{conv: mean W2 continuity}
		\E \Big[ \Wc^2_2( \bar{\mu}^n, \, \bar{\mu} ) \Big] \xlongrightarrow{n \to \infty} 0.
	\end{equation}
	Consequently, it follows that
	\begin{equation*}
		\Wc_2 \big( \Lc(\bar{\mu}^n), \, \Lc(\bar{\mu}) \big) \xlongrightarrow{n \to \infty} 0.
	\end{equation*}
\end{prop}

\begin{proof}
	In (i), (iv) of Lemma \ref{lem : villani}, we set
	\begin{equation*}
		X := \mathcal{P}(\mathcal{C}^d) \times \mathcal{P}(\mathcal{C}^d), \qquad \nu_n := \mathcal{L}(\bar{\mu}^n, \bar{\mu}), \qquad \nu := \mathcal{L}(\bar{\mu}, \bar{\mu}),
	\end{equation*}
	and define a continuous function $\varphi$ on $\mathcal{P}(\mathcal{C}^d) \times \mathcal{P}(\mathcal{C}^d)$
	\begin{align*}
		\varphi(\nu_1, \nu_2) := \Wc^2_2(\nu_1, \nu_2) \le 2 \Wc^2_2(\delta_0, \nu_1) +
		2 \Wc^2_2(\delta_0, \nu_2) = 2d^2 \big( (\delta_0,\delta_0), (\nu_1, \nu_2) \big)
	\end{align*}
	then, Lemma \ref{lem : W2 joint law conv} implies the result \eqref{conv: mean W2 continuity}:
	\begin{equation*}
		%\int \Wc^2_2(\mu_1,\mu_2)\Lc(\bar{\mu}^n,\bar{\mu})(d\mu_1,d\mu_2) =
		\E[\Wc^2_2(\bar{\mu}^n,\bar{\mu})] = \int \varphi \, d\nu_n \xlongrightarrow{n \to \infty} \int \varphi \, d\nu = 0.
	\end{equation*}
\end{proof}

By an approximation argument, the Lipschitz assumption on $G$ can be removed as the following.
\begin{thm} \label{thm: relaxed_bar mu_n to bar mu}
	Under Assumptions \ref{ass:Lipschitz} and \ref{ass: G Lipschitz} (ii), we have
	\begin{equation}
		\E \Big[ \Wc^2_2( \bar{\mu}^n, \, \bar{\mu} ) \Big] \xlongrightarrow{n \to \infty} 0.
	\end{equation}
\end{thm}

The proof approximating $G$ with a sequence of Lipschitz graphons is very similar to the argument in the proof of Theorem \ref{thm: relaxed_weighted mean W2 continuity} below, so we omit its proof.

In the following, we recall the notations \eqref{def: simple notations}. We can approximate the graphon-weighted average of the distributions of continuum of particles with the graphon-weighted average of the empirical measures of finite particles in the graphon mean field system \eqref{eq : graphon particle system with common noise} in terms of $\Wop_2$-metric. The proofs of these law of large number type results can be found in Appendix \ref{sec: proofs}.

\begin{prop} \label{thm: weighted mean W2 continuity}
	Under Assumptions \ref{ass:Lipschitz}, \ref{ass: G Lipschitz} (i), and \ref{ass:G^n_and_G},
	we have
	\begin{equation} \label{conv: weighted mean W2 continuity}
		\frac{1}{n}\sum_{i = 1}^n \E\Bigg[ \Wop^2_2\bigg(\frac{1}{n}\sum_{j = 1}^n G^n_{i,j} \delta_{X_{{j/n}}}, ~ \mu_{i/n}^G \bigg)\Bigg] \xlongrightarrow{n \to \infty} 0.
	\end{equation}
\end{prop}

Again by the approximation argument, the Lipschitz condition on the graphon can be removed.

\begin{thm}\label{thm: relaxed_weighted mean W2 continuity}
	Under Assumptions \ref{ass:Lipschitz}, \ref{ass: G Lipschitz} (ii), and \ref{ass:G^n_and_G}, we have
	\begin{equation}
		\frac{1}{n}\sum_{i = 1}^n \E\Bigg[ \Wop^2_2\bigg(\frac{1}{n}\sum_{j = 1}^n G^n_{i,j} \delta_{X_{{j/n}}}, ~ \mu_{i/n}^G \bigg)\Bigg] \xlongrightarrow{n \to \infty} 0.
	\end{equation}
\end{thm}

\begin{rem} \hspace{1pt}
	\begin{enumerate}[label=(\roman*)]
		\item Theorem \ref{thm: relaxed_bar mu_n to bar mu} can be seen as a special case of Theorem \ref{thm: relaxed_weighted mean W2 continuity}, if we simply let $G^n_{i,j} = 1$ for all $i,j \in [n]$, and $G_{u,v} = 1$ for all $u,v \in [0,1]$.
		\item To the best of our knowledge, this convergence in $\Wop_2$ is a new result, which reveals that the convergence of $\frac{1}{n}\sum_{j = 1}^n G^n_{i,j} \delta_{X_{{j/n}}}$ is actually not only in the topology of weak convergence but also in $\Wop_2$. The convergence in the topology of weak convergence can be found in Lemma 6.2 of \cite{BCW}.
	\end{enumerate}
\end{rem}

\subsection{Convergence of finite particle systems to the graphon system}
All the results in this subsection state the convergence of (weighted) empirical measures in finite particle systems to the graphon system \eqref{eq : graphon particle system with common noise}, as the number of particles $n$ tends to infinity, i.e., the convergence of $\{\frac{1}{n}\sum_{j = 1}^n G^n_{i,j}\delta_{X^n_j}\}_{n \in \N}$ in $\Wop_2$, and $\{\frac{1}{n}\sum_{j = 1}^n\delta_{X^n_j}\}_{n \in \N}$ in $\Wc_2$, respectively.

We first present a similar law of large number result between the graphon system \eqref{eq : graphon particle system with common noise} and the finite particle system \eqref{eq : finite particle system with common noise}. Note that this result is stronger than Theorem 4.1 of \cite{nonlinear:graphon}, in which they only consider the convergence of the integral of empirical marginal measures, i.e., $\frac{1}{n}\sum_{i = 1}^n \E\big[\int_0^T \Wop^2_2\big(\frac{1}{n}\sum_{j = 1}^n G^n_{i,j} \delta_{X^{{n}}_j(t)}, ~ \mu_{i/n}^G(t) \big)dt\big]$. We consider the convergence of the process law, which is stronger and does not require Assumption 3.1 (ii) in their paper. Their proofs are given in Appendix \ref{sec: proofs}.

\begin{thm} \label{thm: weighted mean W2 squared}
	Under Assumptions \ref{ass:Lipschitz}, \ref{ass: G Lipschitz} (ii), and \ref{ass:G^n_and_G}, we have
	\begin{equation}  \label{conv: weighted mean W2 squared}
		\frac{1}{n}\sum_{i = 1}^n \E\Bigg[ \Wop^2_2\bigg(\frac{1}{n}\sum_{j = 1}^n G^n_{i,j}\delta_{X^n_j}, \mu_{i/n}^G \bigg) \Bigg] \xlongrightarrow{n \to \infty} 0.
	\end{equation}
\end{thm}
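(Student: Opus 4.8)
The plan is to interpolate between the two measures in \eqref{conv: weighted mean W2 squared} through the \emph{graphon-system weighted empirical measure} $\frac1n\sum_{j=1}^n G^n_{i,j}\delta_{X_{j/n}}$ and thereby reduce the claim to a Gr\"onwall estimate on $\frac1n\sum_j\E\Vert X^n_j-X_{j/n}\Vert_{*,T}^2$. Setting
\begin{equation*}
    \rho_n:=\frac1n\sum_{i=1}^n\E\Big[\Wop_2^2\Big(\tfrac1n\sum_{j=1}^n G^n_{i,j}\delta_{X_{j/n}},\ \int_0^1 G_{i/n,v}\mu_v\,dv\Big)\Big],
\end{equation*}
Theorem \ref{thm: weighted mean W2 continuity} gives $\rho_n\to0$. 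By the triangle inequality for $\Wop_2$ and $(a+b)^2\le2a^2+2b^2$, the left-hand side of \eqref{conv: weighted mean W2 squared} is at most $\frac2n\sum_i\E[\Wop_2^2(\frac1n\sum_j G^n_{i,j}\delta_{X^n_j},\frac1n\sum_j G^n_{i,j}\delta_{X_{j/n}})]+2\rho_n$. In the first term the two measures share the total mass $m^n_i:=\frac1n\sum_j G^n_{i,j}\le1$, so by the equal-mass identity \eqref{eq: WOP} it equals $(m^n_i)^2$ times the squared $\Wc_2$-distance of the normalized measures; estimating the latter by the synchronous coupling that matches the $j$-th atoms and using $m^n_i\le1$ bounds it by $\frac1n\sum_j G^n_{i,j}\Vert X^n_j-X_{j/n}\Vert_{*,T}^2$. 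Averaging over $i$ and using the column-sum bound $\frac1n\sum_i G^n_{i,j}\le1$, it therefore suffices to prove $\theta_n:=\frac1n\sum_{j=1}^n\E\Vert X^n_j-X_{j/n}\Vert_{*,T}^2\to0$.

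For this, fix $i$ and subtract the equation for $X_{i/n}$ in \eqref{eq : graphon particle system with common noise} from that for $X^n_i$ in \eqref{eq : finite particle system with common noise}; the initial values coincide and so do the driving noises $B_{i/n}$ and $W$. With $\nu^n_i(s):=\frac1n\sum_j G^n_{i,j}\delta_{X^n_j(s)}$, $\eta_i(s):=\int_0^1 G_{i/n,v}\mu_v(s)\,dv$, and $e^n_i(t):=\E\Vert X^n_i-X_{i/n}\Vert_{*,t}^2$, standard moment bounds (It\^o's isometry, Doob's and Jensen's inequalities) together with Assumption \ref{ass:Lipschitz}(i) --- noting $m_{\nu^n_i(s)},m_{\eta_i(s)}\le1$, so the prefactor $\max\{\cdot,\cdot,1\}$ equals $1$ --- yield
\begin{equation*}
    e^n_i(t)\ \le\ C\int_0^t e^n_i(s)\,ds\ +\ C\int_0^t\E\big[\Wop_2^2(\nu^n_i(s),\eta_i(s))\big]\,ds .
\end{equation*}
Splitting $\Wop_2^2(\nu^n_i(s),\eta_i(s))$ through $\frac1n\sum_j G^n_{i,j}\delta_{X_{j/n}(s)}$: the first piece has equal masses, so by \eqref{eq: WOP} and the synchronous coupling it is $\le\frac2n\sum_j G^n_{i,j}\Vert X^n_j-X_{j/n}\Vert_{*,s}^2$; the second piece equals $\Wop_2^2$ of the time-$s$ marginals of $\frac1n\sum_j G^n_{i,j}\delta_{X_{j/n}}$ and $\int_0^1 G_{i/n,v}\mu_v\,dv$, hence is dominated by $\Wop_2^2(\frac1n\sum_j G^n_{i,j}\delta_{X_{j/n}},\int_0^1 G_{i/n,v}\mu_v\,dv)$ on path space, since the scaling maps in Definition \ref{Def: WOP C^d} commute with time-evaluation and preserve total mass, so any coupling of path measures projects to a coupling of their time-$s$ marginals. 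Averaging over $i$, putting $\bar e^n(t):=\frac1n\sum_i e^n_i(t)$, folding the interaction sum back into $\bar e^n$ via $\frac1n\sum_i G^n_{i,j}\le1$, and bounding the marginal term by $\rho_n$, we arrive at $\bar e^n(t)\le C'\int_0^t\bar e^n(s)\,ds+C'T\rho_n$. The a priori bounds $\sup_{n,i}\E\Vert X^n_i\Vert_{*,T}^2<\infty$ (immediate from boundedness of the $\phi_\alpha$ and Assumption \ref{ass:Lipschitz}(ii)) and $\sup_u\E\Vert X_u\Vert_{*,T}^2<\infty$ (Lemma \ref{lem: unique pathwise solution}) make $\bar e^n(T)$ finite, so Gr\"onwall's lemma gives $\bar e^n(T)\le C'T\rho_n e^{C'T}\to0$, whence $\theta_n=\bar e^n(T)\to0$ and the theorem follows.

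I expect the Gr\"onwall step to be the main obstacle, because the interaction error $\Wop_2^2(\nu^n_i(s),\eta_i(s))$ must be split into a part that feeds the particle errors $e^n_j(s)$ back into the estimate (through the equal-mass identity \eqref{eq: WOP} and the synchronous coupling) and a part equal to the already-controlled quantity of Theorem \ref{thm: weighted mean W2 continuity}, all while the double average over $i,j$ is kept bounded via the column-sum inequality $\frac1n\sum_i G^n_{i,j}\le1$. A secondary technical point is that the empirical interaction measures need not lie in $\Mc_{2,K}(\R^d)$ for every realization, so the use of Assumption \ref{ass:Lipschitz}(i) should be supplemented by the boundedness of the $\phi_\alpha$ (handling, via a truncation on the small-probability event where the $K$-moment bound fails, the contribution of atypical configurations) in order to make all estimates fully rigorous.
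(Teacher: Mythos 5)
Your proposal is correct and follows essentially the same route as the paper: interpolate through the graphon-system weighted empirical measure $\tfrac1n\sum_j G^n_{i,j}\delta_{X_{j/n}}$, use the equal-mass identity \eqref{eq: WOP} with the synchronous coupling and the column-sum bound $\tfrac1n\sum_i G^n_{i,j}\le1$, dominate the time-marginal $\Wop_2^2$ by the path-space $\Wop_2^2$ (the paper's \eqref{ineq: two WOPs}), invoke Theorem \ref{thm: weighted mean W2 continuity}, and close with Gr\"onwall. The only cosmetic difference is that you run Gr\"onwall on the averaged particle-level error $\bar e^n(t)$, whereas the paper formally runs it on the averaged measure-level quantity $\tfrac1n\sum_i\E[\Wop_2^2(\bar\mu^n_i,\bar\mu_{i/n})]$; the two are linked by exactly the synchronous-coupling inequality you use, so this is the same argument. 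Your final remark about whether the random interaction measures always lie in $\Mc_{2,K}(\R^d)$ is a fair observation — the paper's proof also applies Assumption \ref{ass:Lipschitz}(i) without verifying this membership pathwise — but it is a shared subtlety of the formulation rather than a gap in your argument relative to the paper's.
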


We also provide the $\Wc_2$-convergence of the empirical measures of the $n$-particle system to those of the graphon system in the sense of process laws.

\begin{prop} \label{thm: W2 to zero}
	Under Assumptions \ref{ass:Lipschitz}, \ref{ass: G Lipschitz} (ii), and \ref{ass:G^n_and_G}, we have
	\begin{equation}    \label{conv: mean W2 squared}
		\E \Big[ \Wc^2_2( \mu^n, \bar{\mu} ) \Big] \xlongrightarrow{n \to \infty} 0,
	\end{equation}
	and
	\begin{equation}    \label{conv: to zero}
		\frac{1}{n} \sum_{i = 1}^n \E\bigg[\sup_{t \in [0,T]} \big|X^n_i(t) - X_{i/n}(t) \big|^2 \bigg]\xlongrightarrow{n \to \infty} 0.
	\end{equation}
\end{prop}
Note that the result \eqref{conv: to zero} recovers Theorem 4.1 of \cite{nonlinear:graphon} in the setting without common noise, but we used a different argument. Moreover, in the next theorem, we obtain a stronger result assuming $G$ is only measurable and showing convergence of process laws.

\begin{thm} \label{thm: relaxed_weighted mean W2 squared}
	Under Assumptions \ref{ass:Lipschitz} and \ref{ass:G^n_and_G}, and $G$ is only measurable instead of continuous, we have
	\begin{equation*}
		\E\Big[ \Wc^2_2\big(\mu^n, ~ \bar{\mu} \big) \Big] \xlongrightarrow{n \to \infty} 0.
	\end{equation*}
\end{thm}

\subsection{Convergence rates}
The following lemma will play a central role in proving Theorem \ref{thm: convergence rates} and its proof can be found in Appendix \ref{sec: proofs}.

\begin{lem} \label{lem:sampling_rate}
	Suppose Assumption \ref{ass:Lipschitz} holds true. Then, there exists some positive constant $C$ satisfying
	\begin{equation}\label{eq:rate_marginal}
		\sup_{t \in [0, T]} \sup_{i \in [n]} \E\Bigg[\Wop^2_2\bigg(\frac{1}{n}\sum_{j = 1}^n G^n_{i,j}\delta_{X_{{j/n}}(t)}, 
		\frac{1}{n}\sum_{j = 1}^n G^n_{i,j} \mu_{j/n}(t) \bigg)\Bigg]
		~ \le ~ 
		CM_n,
	\end{equation}
	where the rate of convergence $M_n$ is given by
	\begin{equation*}
		M_n = 
		\begin{cases}
			n^{-\frac{1}{2}} + n^{-\frac{\eps}{2+\eps}}, &~\mbox{if}~ d < 4, ~ \eps \neq 2, 
			\\
			n^{-\frac{1}{2}}\log(1 + n) + n^{-\frac{\eps}{2+\eps}}, &~\mbox{if}~ d = 4, ~ \eps \neq 2,
			\\
			n^{-\frac{2}{d}} + n^{-\frac{\eps}{2+\eps}}, &~\mbox{if}~ d > 4, ~ \eps \neq \frac{d-4}{d - 2}.
		\end{cases}
	\end{equation*}
	Here, $\eps$ is the constant in Assumption \ref{ass:Lipschitz} (ii).
\end{lem}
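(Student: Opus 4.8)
\emph{Proof proposal.} The plan is to reduce, via the $\Wop_2$ mass identity, to a Fournier--Guillin-type estimate for a graphon-weighted empirical measure around its conditional mean given $W$. First, fix $n\in\N$, $t\in[0,T]$, $i\in[n]$ and set
$m:=\tfrac1n\sum_{j=1}^nG^n_{i,j}\in[0,1]$, $A:=\tfrac1n\sum_jG^n_{i,j}\delta_{X_{j/n}(t)}$, $B:=\tfrac1n\sum_jG^n_{i,j}\mu_{j/n}(t)$. Since each $\delta_{X_{j/n}(t)}$ and each $\mu_{j/n}(t)$ is a probability measure, $A$ and $B$ share the \emph{deterministic} total mass $m$; if $m=0$ the left side of \eqref{eq:rate_marginal} vanishes, so assume $m>0$. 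By Lemma~\ref{Lem: WOP R^d}(ii) with equal masses,
\[
    \Wop_2^2(A,B)=m^2\,\Wc_2^2(\bar A,\bar B),\qquad \bar A=\sum_{j=1}^nw_j\delta_{X_{j/n}(t)},\quad\bar B=\sum_{j=1}^nw_j\mu_{j/n}(t),
\]
where $w_j:=G^n_{i,j}/(nm)$, $\sum_jw_j=1$, and $\max_jw_j\le 1/(nm)$ because $G^n_{i,j}\le1$. So the task becomes controlling $m^2\,\E[\Wc_2^2(\bar A,\bar B)]$, the error of the weighted empirical measure $\bar A$ about its mean $\bar B$.

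Conditionally on the common noise $W$, the finite family $\{X_{j/n}(t)\}_{j=1}^n$ is independent with $X_{j/n}(t)\sim\mu_{j/n}(t)=\Lc[X_{j/n}(t)\mid W]$ (independence at the grid points being built into the construction of the system), and then $\bar B$ is a deterministic measure equal to the conditional mean of $\bar A$. I would run the dyadic partition argument of Fournier--Guillin for $\Wc_2^2(\bar A,\bar B)$ conditionally on $W$; the only change from the i.i.d.\ case is the fluctuation estimate on a dyadic cube $Q$, which by conditional independence satisfies
\[
    \E\big[\big|\bar A(Q)-\bar B(Q)\big|\,\big|\,W\big]\le\Big(\sum_{j=1}^nw_j^2\,\mu_{j/n}(t)(Q)\Big)^{1/2}\le\sqrt{\max_jw_j}\,\sqrt{\bar B(Q)}\le\tfrac{1}{\sqrt{nm}}\sqrt{\bar B(Q)},
\]
together with the trivial bound $\le2\bar B(Q)$. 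These are exactly the inputs the Fournier--Guillin estimate uses for $nm$ i.i.d.\ samples drawn from $\bar B$; writing $q:=2+\eps$, it yields, in the regime $nm\ge1$,
\[
    \E\big[\Wc_2^2(\bar A,\bar B)\,\big|\,W\big]\le C_{d,q}\big(M_{x_0}(\bar B)\big)^{2/q}R_{nm},\qquad R_N:=\begin{cases}N^{-\frac12}+N^{-\frac{\eps}{2+\eps}},&d<4,\ \eps\ne2,\\ N^{-\frac12}\log(1+N)+N^{-\frac{\eps}{2+\eps}},&d=4,\ \eps\ne2,\\ N^{-\frac2d}+N^{-\frac{\eps}{2+\eps}},&d>4,\ \eps\ne\tfrac{4-d}{d-2},\end{cases}
\]
with $C_{d,q}$ depending only on $d$ and $q$ (the exceptional values of $\eps$ are precisely those of the Fournier--Guillin theorem applied with moment order $q$).

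For the final step, note that each exponent occurring in $R_N$ (namely $\tfrac12$, $\tfrac2d$, $\tfrac{\eps}{2+\eps}$) is strictly less than $2$ and $m\le1$, so $m^2(nm)^{-\alpha}=m^{2-\alpha}n^{-\alpha}\le n^{-\alpha}$ for each such $\alpha$, while $\log(1+nm)\le\log(1+n)$; hence $m^2R_{nm}\le C M_n$. Taking $\E$ over $W$, Jensen's inequality (the exponent $2/q<1$) and Lemma~\ref{lem: unique pathwise solution} give $\E[M_{x_0}(\bar B)]=\sum_jw_j\,\E|X_{j/n}(t)-x_0|^2\le\sup_{u\in[0,1]}\E\|X_u-x_0\|_{*,T}^{2+\eps}<\infty$, so $m^2\,\E[\Wc_2^2(\bar A,\bar B)]\le C M_n$ in the regime $nm\ge1$, with $C$ free of $n,i,t$. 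In the remaining regime $0<nm<1$ one uses the crude bound $\Wc_2^2(\bar A,\bar B)\le2M_{x_0}(\bar A)+2M_{x_0}(\bar B)$ together with $m^2\le n^{-2}$ to bound the contribution by $C n^{-2}\le C M_n$ (since $M_n\ge n^{-2/d}\ge n^{-2}$). Taking the supremum over $t\in[0,T]$ and $i\in[n]$, all bounds being uniform, yields \eqref{eq:rate_marginal}. The main obstacle is the weighted, non-identically-distributed Fournier--Guillin bound of the second step; once the cube-fluctuation estimate above is in hand the classical dyadic argument carries over essentially verbatim, and the remaining work is bookkeeping with the mass factor $m$ — which is exactly what lets the estimate hold uniformly in $i$ without any positivity assumption on the graphon.
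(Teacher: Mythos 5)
Your proposal is correct and is, at its core, the same argument as the paper's: reduce $\Wop_2$ to $\Wc_2$ via the equal-mass identity \eqref{eq: WOP}, handle $m_{i,n}=0$ trivially, and run a Fournier--Guillin dyadic-cube estimate using the conditional independence of $\{X_{j/n}(t)\}_j$ given $W$ to get a variance bound of order $\bar B(Q)/(nm)$ on each cube, together with the $(2+\eps)$-moment tail bound. The bookkeeping differs in two ways that are worth noting. First, you condition on $W$ explicitly, treat $\bar B$ as a fixed target, and invoke the FG theorem as if for $nm$ effective samples; this forces you to split into the regimes $nm\ge 1$ and $0<nm<1$, the latter handled by the crude $m^2\lesssim n^{-2}$ bound. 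The paper instead keeps all expectations unconditional, proving \eqref{eq:each_child_estimate} and \eqref{eq:partition_estimate} directly; after multiplying by $m_{i,n}^2$ and pulling one power of $m_{i,n}$ into \eqref{eq:partition_estimate}, the mass factor collapses to $m_{i,n}\le 1$ and the rate comes out in $n$ with no case split. Both arrive at the same $M_n$. Second, a notational slip: you write the FG bound as $C_{d,q}\big(M_{x_0}(\bar B)\big)^{2/q}R_{nm}$, where $M_{x_0}(\cdot)$ is the paper's \emph{second} moment, but Fournier--Guillin's Theorem~1 (with $p=2$, $q=2+\eps$) requires the $q$-th moment $\int\|x-x_0\|^{q}\,d\bar B(x)$ raised to the power $2/q$; your subsequent bound by $\sup_u\E\|X_u-x_0\|_{*,T}^{2+\eps}$ shows this is what you meant, and Jensen with exponent $2/q<1$ then applies to the $q$-th moment as you intend, so this is a typo rather than a gap. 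With that moment corrected, the proof is sound.
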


\begin{rem} \hspace{1pt}
	\begin{enumerate}[label=(\roman*)]
		\item The main idea and steps of the proof are almost the same as \cite{nonlinear:graphon}, which originates from Theorem 1 in \cite{FournierGuillin}, but our assumptions are weaker. Here, we provide the proof for completeness.
		
		\item It is natural to consider the convergence rate for the process law by replacing $X_{j/n}(t)$ and $\mu_{j/n}(t)$ by $X_{j/n}$ and $\mu_{j/n}$, respectively, in \eqref{eq:rate_marginal}. The main problem is that we don't know whether \eqref{eq:child_estimate} holds true for probability measures on $\Cc^d$, which is an infinite-dimensional space.
		In the proof of \eqref{eq:child_estimate}, one key feature of $\R^d$ is that we can divide $(-1,1]^d$ naturally into $2^{d}$ disjoint parts with half diameter, due to the total boundedness of $(-1,1]^d$. However, the unit ball in $\Cc^d$ doesn't have the total boundedness.
		We refer to Sections 2.1 and 2.2 of \cite{lei} for a detailed discussion.
	\end{enumerate}
\end{rem}

\begin{thm} \label{thm: convergence rates}
	Let Assumptions \ref{ass:Lipschitz} and \ref{ass: G Lipschitz} (i) hold true.
	Suppose that $G^n_{i,j} = G_{{i/n}, {j/n}}$ holds for $i, j \in [n], n \in \N$.
	Then, for some positive constant $K_{d,\eps} > 0$
	\begin{align*}
		\sup_{i \in [n]}\E\bigg[\sup_{t \in [0,T]}|X^n_i(t) - X_{{i/n}}(t)|^2\bigg]
		~ \le ~ 
		K_{d,\eps}M_n
	\end{align*}
	holds for $M_n$ given in Lemma \ref{lem:sampling_rate}.
\end{thm}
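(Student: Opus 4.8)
The plan is to run a Gr\"onwall-type estimate on the squared sup-norm difference $e_n(t) := \sup_{i \in [n]} \E[\sup_{s \le t} |X^n_i(s) - X_{i/n}(s)|^2]$, controlling the interaction terms by splitting into a ``sampling error'' piece governed by Lemma \ref{lem:sampling_rate} and a ``propagation'' piece governed by $e_n$ itself. First I would write, for each fixed $i$, the difference $X^n_i(t) - X_{i/n}(t)$ as the sum of three stochastic integrals (drift, idiosyncratic, common), using that $X^n_i(0) = X_{i/n}(0)$ so there is no initial error. Applying the elementary inequality $|a+b+c|^2 \le 3(|a|^2+|b|^2+|c|^2)$, Jensen on the drift integral, and the Burkholder--Davis--Gundy inequality on the two martingale parts, I get
\begin{equation*}
    \E\Big[\sup_{s \le t}|X^n_i(s) - X_{i/n}(s)|^2\Big]
    \le
    C\int_0^t \E\Big[\big|\phi_p(X^n_i(s),\nu^{n,i}(s)) - \phi_p(X_{i/n}(s),\eta^{i}(s))\big|^2\Big]ds + (\text{diffusion terms}),
\end{equation*}
where $\nu^{n,i}(s) = \frac1n\sum_j G^n_{i,j}\delta_{X^n_j(s)}$ and $\eta^{i}(s) = \int_0^1 G_{i/n,v}\mu_v(s)dv$. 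Here I use boundedness of the $\phi_\alpha$ (Assumption \ref{ass:Lipschitz}(i)) to ensure all moments are finite and BDG applies cleanly.

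Next I would apply the Lipschitz bound of Assumption \ref{ass:Lipschitz}(i) to each $\phi_\alpha$ term. Because the interaction measures here are subprobability-mass measures with uniformly bounded mass (at most $1$, since $G^n_{i,j} \le 1$), the factor $\max\{m_{\mu_1},m_{\mu_2},1\}$ is just $1$, so the bound becomes $|\phi_\alpha(x_1,\mu_1) - \phi_\alpha(x_2,\mu_2)|^2 \le 2L^2(|x_1-x_2|^2 + \Wop_2^2(\mu_1,\mu_2))$. The state difference $|X^n_i(s) - X_{i/n}(s)|^2$ is controlled by $e_n(s)$. For the measure term I would insert the intermediate measure $\frac1n\sum_j G^n_{i,j}\delta_{X_{j/n}(s)}$ and use $\Wop_2^2(\mu,\nu) \le 2\Wop_2^2(\mu,\lambda) + 2\Wop_2^2(\lambda,\nu)$:
\begin{equation*}
    \Wop_2^2\big(\nu^{n,i}(s),\eta^i(s)\big)
    \le
    2\,\Wop_2^2\Big(\tfrac1n\textstyle\sum_j G^n_{i,j}\delta_{X^n_j(s)},\,\tfrac1n\textstyle\sum_j G^n_{i,j}\delta_{X_{j/n}(s)}\Big)
    + 2\,\Wop_2^2\Big(\tfrac1n\textstyle\sum_j G^n_{i,j}\delta_{X_{j/n}(s)},\,\eta^i(s)\Big).
\end{equation*}
The first term on the right has equal masses $\frac1n\sum_j G^n_{i,j}$ on both sides, so by \eqref{eq: WOP} it equals (mass${}^2) \cdot \Wc_2^2$ of the normalized measures, which is at most $\frac1n\sum_j |X^n_j(s) - X_{j/n}(s)|^2$ using the obvious coupling matching $X^n_j$ with $X_{j/n}$; averaging over $i$ this is $\le e_n(s)$. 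The second term is handled by splitting once more via the intermediate measure $\frac1n\sum_j G^n_{i,j}\mu_{j/n}(s)$: one piece is exactly the object bounded by $CM_n$ in Lemma \ref{lem:sampling_rate} (uniformly in $i$ and $s$), and the other piece, $\Wop_2^2(\frac1n\sum_j G^n_{i,j}\mu_{j/n}(s), \int_0^1 G_{i/n,v}\mu_v(s)dv)$, is a deterministic discretization error that tends to $0$ — indeed using $G^n_{i,j} = G_{i/n,j/n}$, the H\"older continuity of $v \mapsto \mu_v$ in $\Wc_2$ from Lemma \ref{lem: continuity of average measure}, the $L^2$-Lipschitz continuity of $G$ from Assumption \ref{ass: G Lipschitz}(i), and standard Riemann-sum estimates, this is $O(n^{-1}) + O(n^{-2\gamma})$ for $\gamma < \frac{\eps}{2(2+\eps)}$, which is dominated by $M_n$. (Strictly, one should check that this discretization rate is $\le K M_n$; since $n^{-2\gamma}$ can be taken arbitrarily close to $n^{-\eps/(2+\eps)}$ and $M_n$ already contains the term $n^{-\eps/(2+\eps)}$, this is fine after absorbing constants, at the cost of possibly a further $\eps$-loss in the exponent — which is acceptable since $M_n$ is only an upper bound on the rate.)

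Collecting everything, I arrive at an integral inequality of the form
\begin{equation*}
    e_n(t) \le C_1 M_n + C_2 \int_0^t e_n(s)\,ds, \qquad t \in [0,T],
\end{equation*}
with $C_1, C_2$ depending only on $L, T, d, \eps$ (and $T$ through the BDG/Jensen constants). Since $e_n$ is finite (by Lemma \ref{lem: unique pathwise solution} and boundedness of the coefficients, $\sup_i \E\|X^n_i\|_{*,T}^2 < \infty$), Gr\"onwall's inequality gives $e_n(T) \le C_1 M_n e^{C_2 T} =: K_{d,\eps} M_n$, which is the claim. The main obstacle is the bookkeeping in the measure term: one must be careful that after all the triangle-inequality splittings the ``equal mass'' structure is genuinely preserved in the one place where \eqref{eq: WOP} is invoked (otherwise only the weaker inequality \eqref{ineq: WOP} is available, which carries an extra $M_{x_0}$-difference term that would need its own bound), and that the deterministic discretization error is honestly controlled by $M_n$ rather than by a slower rate — this is exactly where the hypothesis $G^n_{i,j} = G_{i/n,j/n}$ and the H\"older regularity of $u \mapsto \mu_u$ are used, and it is the reason Assumption \ref{ass: G Lipschitz} appears in the statement while Assumption \ref{ass:G_n_and_G} does not.
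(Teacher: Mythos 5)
Your overall architecture matches the paper's: starting from the estimate that reduces the state-difference to the time integral of $\Wop_2^2$ between $\tfrac1n\sum_j G^n_{i,j}\delta_{X^n_j(t)}$ and $\int_0^1 G_{i/n,v}\mu_v(t)\,dv$ (which is exactly \eqref{ineq: W2 bound}), splitting by triangle inequality through the two intermediate measures $\tfrac1n\sum_j G^n_{i,j}\delta_{X_{j/n}(t)}$ and $\tfrac1n\sum_j G^n_{i,j}\mu_{j/n}(t)$, invoking \eqref{eq: WOP} on the equal-mass piece, applying Lemma~\ref{lem:sampling_rate} to the sampling piece, and closing with Gr\"onwall. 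That part is right.

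The genuine gap is in your treatment of the deterministic discretization error $\Wop_2^2\!\bigl(\tfrac1n\sum_j G^n_{i,j}\mu_{j/n}(t),\int_0^1 G_{i/n,v}\mu_v(t)\,dv\bigr)$. You control the $\mu_v$-piece of the Riemann-sum error via the a.s.\ $\gamma$-H\"older continuity of $v\mapsto\mu_v$ from the statement of Lemma~\ref{lem: continuity of average measure}, yielding $O(n^{-2\gamma})$ with $\gamma<\tfrac{\eps}{2(2+\eps)}$. But then $2\gamma<\tfrac{\eps}{2+\eps}$ strictly, so $n^{-2\gamma}$ is a \emph{slower} rate than $n^{-\eps/(2+\eps)}$, hence is \emph{not} dominated by $M_n$; your parenthetical claim that the resulting ``$\eps$-loss in the exponent'' is ``acceptable since $M_n$ is only an upper bound on the rate'' gets the logic backwards — if your error term is asymptotically larger than $M_n$, the conclusion $\sup_i\E[\cdot]\le K_{d,\eps}M_n$ fails. (There is also a secondary issue: the a.s.\ H\"older constant $Y$ in the proof of Lemma~\ref{lem: continuity of average measure} is only shown to be a.s.\ finite, not $L^2$, so even the weaker rate is not immediately available in expectation.) The fix, and what the paper actually does, is to use the $L^2$-in-$\omega$ estimate that appears \emph{inside} the proof of Lemma~\ref{lem: continuity of average measure}, namely \eqref{ineq: W^2_2} specialized to exponent $2$ (which follows from the $2+\eps$ version by Jensen, and which the paper calls ``\eqref{ineq: W^2_2} with $\eps=0$''):
\begin{equation*}
\E\bigl[\Wc_2^2(\mu_{u_1}(t),\mu_{u_2}(t))\bigr]\;\le\;C\,|u_1-u_2|.
\end{equation*}
Combined with the $L^2$-Lipschitz estimate for $G$, this bounds the discretization error by $O(n^{-1})$, which \emph{is} dominated by $M_n$ since $\eps/(2+\eps)<1$. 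The paper also takes some care with the degenerate cases $m_{i,n}=0$ or $\int_0^1 G_{i/n,v}\,dv=0$ when normalizing, and splits the discretization error further into a measure-discretization piece $\Dc^n_i$ and a graphon-discretization piece $\Gc^n_i$; you should make a similar distinction explicit, since \eqref{eq: WOP} only applies when masses coincide and the two masses $\tfrac1n\sum_j G^n_{i,j}=m_{i,n}$ and $\int_0^1 G_{i/n,v}\,dv$ are generally different.
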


\begin{rem} \hspace{0.5pt}
	\begin{enumerate}[label=(\roman*)]
		\item Compared to the result \eqref{conv: to zero} in Proposition \ref{thm: W2 to zero}, it seems that the convergence in this theorem is stronger and at the same time with fewer assumptions. However, note that the convergence rate holds true only in the case of $G^n_{i,j} = G_{{i/n}, {j/n}}$, which is stronger than Assumption \ref{ass:G^n_and_G}.
		\item Compared to Theorem 4.2 in \cite{nonlinear:graphon}, we do not require Assumptions 3.1 and 3.2 in their paper.
	\end{enumerate}
\end{rem}

\bigskip

%%%%%%%%%%%%%%%%%%%%%%%%%%%%%%%%%%%%%%%%%%%%%%
%% Single Appendix:                         %%
%%%%%%%%%%%%%%%%%%%%%%%%%%%%%%%%%%%%%%%%%%%%%%
%\begin{appendix}
%\section*{???}%% if no title is needed, leave empty \section*{}.
%\end{appendix}
%%%%%%%%%%%%%%%%%%%%%%%%%%%%%%%%%%%%%%%%%%%%%%
%% Multiple Appendixes:                     %%
%%%%%%%%%%%%%%%%%%%%%%%%%%%%%%%%%%%%%%%%%%%%%%
\begin{appendix}

\section{Proofs} \label{sec: proofs}

\begin{proof} [Proof of Lemma \ref{lem: continuity of average measure}]
	For fixed $u_1, u_2 \in [0, 1]$, consider auxiliary processes for $i = 1, 2$:
	\begin{align*}
		\tilde{X}_{u_i}(t) = \tilde{X}_{u_i}(0) + \int_0^t \phi_p \big(\tilde{X}_{u_i}(s), ~ \mu_{u_i}^G(s) \big) ds
		&+ \int_0^t \phi_b \big(\tilde{X}_{u_i}(s), ~ \mu_{u_i}^G(s) \big) dB(s)
		\\
        &+ \int_0^t \phi_w \big(\tilde{X}_{u_i}(s), ~ \mu_{u_i}^G(s) \big) dW(s),
	\end{align*}
	where $B$ is a $d$-dimensional Brownian motion independent of $W$ and $\{\tilde{X}_{u_i}(0)\}_{i=1, 2}$, and the conditional initial law is given as $\mathcal{L}(\tilde{X}_{u_i}(0)|W) = \mu_{u_i}(0)$ for $i=1, 2$. Then, from Lemma \ref{lem: unique pathwise solution}, we have $\mathcal{L}(\tilde{X}_{u_i}|W) = \mu_{u_i}$ for $i=1, 2$.
	
	Note that the positive measures $\mu^G_{u_i}(s)$ in $\mathcal{M}_2(\R^d)$ have their masses $m_{u_i} := \int_0^1 G_{u_i, v} dv \le 1$ for $i = 1, 2$. Using Jensen inequality with the convexity of $x \mapsto x^{2+\eps}$, H\"older inequality, and Burkholder-Davis-Gundy inequality, we obtain
	\begin{align}
		\E \left[ \Vert \tilde{X}_{u_1}-\tilde{X}_{u_2} \Vert^{2+\eps}_{*, t} \right]
		&\le C \E \left[ \big| \tilde{X}_{u_1}(0) - \tilde{X}_{u_2}(0) \big|^{2+\eps} \right]       \nonumber
		\\
		& \quad + C \E \left[ \left( \int_0^t \Big| \phi_{p} \big(\tilde{X}_{u_1}(s), ~ \mu_{u_1}^G(s) \big) - \phi_{p} \big(\tilde{X}_{u_2}(s), ~ \mu_{u_2}^G(s) \big) \Big| ds \right)^{2+\eps} \right]       \nonumber
		\\
		& \quad + C \sum_{\alpha = b, w} \E \left[ \left( \int_0^t \Big| \phi_{\alpha} \big(\tilde{X}_{u_1}(s), ~ \mu_{u_1}^G(s) \big) - \phi_{\alpha} \big(\tilde{X}_{u_2}(s), ~ \mu_{u_2}^G(s) \big) \Big|^2 ds \right)^{\frac{2+\eps}{2}} \right]        \nonumber
		\\
		&\le C \E \left[ \big| \tilde{X}_{u_1}(0) - \tilde{X}_{u_2}(0) \big|^{2+\eps} \right]       \nonumber
		\\
		& \quad + C t^{1+\eps} \E \left[ \int_0^t \Big| \phi_{p} \big(\tilde{X}_{u_1}(s), ~ \mu_{u_1}^G(s) \big) - \phi_{p} \big(\tilde{X}_{u_2}(s), ~ \mu_{u_2}^G(s) \big) \Big|^{2+\eps} ds \right]       \nonumber
		\\
		& \quad + C t^{\frac{\eps}{2}} \sum_{\alpha = b, w} \E \left[ \int_0^t \Big| \phi_{\alpha} \big(\tilde{X}_{u_1}(s), ~ \mu_{u_1}^G(s) \big) - \phi_{\alpha} \big(\tilde{X}_{u_2}(s), ~ \mu_{u_2}^G(s) \big) \Big|^{2+\eps} ds \right]        \nonumber
		\\
		&\le C \E \left[ \big| \tilde{X}_{u_1}(0) - \tilde{X}_{u_2}(0) \big|^{2+\eps} \right]   \label{ineq: u1 u2 bound}
		\\
		& \quad + C_t \sum_{\alpha = p, b, w} \E \left[ \int_0^t \Big| \phi_{\alpha} \big(X_{u_1}(s), ~ \mu_{u_1}^G(s) \big) - \phi_{\alpha} \big(X_{u_2}(s), ~ \mu_{u_2}^G(s) \big) \Big|^{2+\eps} ds \right].  \nonumber
	\end{align}
	Here and in what follows, $C$ is a positive constant depending only on $\eps$, and $C_t$ is a positive constant depending on $t$ and $\eps$, but their values may change from line to line. By Lemma \ref{lem: basic coefficient estimate}, we have
	\begin{align}
		\Big| \phi_{\alpha} \big(\tilde{X}_{u_1}(s), \mu_{u_1}^G(s) \big) & - \phi_{\alpha} \big(\tilde{X}_{u_2}(s), \mu_{u_2}^G(s) \big) \Big|^{2+\eps}  \label{ineq: 4th moment}
        \\
        & \qquad \le C \Big( \big|\tilde{X}_{u_1}(s)-\tilde{X}_{u_2}(s)\big|^{2+\eps} + \Wop_2^{2+\eps}\big(\mu_{u_1}^G(s), \mu_{u_2}^G(s)\big) \Big). \nonumber
	\end{align}
	
	To compute the $\Wop$ term, Lemma~\ref{Lem: WOP R^d} (ii) yields the inequality for any $s \in [0, T]$
	\begin{align*}
		\Wop_2^2\big(\mu_{u_1}^G(s), \mu_{u_2}^G(s)\big)
		\le (m_{u_1} - m_{u_2})^2 &+ |m_{u_1} - m_{u_2}| \cdot \big|M_0\big(\mu_{u_1}^G(s)\big) - M_0\big(\mu_{u_2}^G(s)\big)\big| \\
		&+ m_{u_1}m_{u_2}\Wc_2^2\big(\mu_{u_1}^{G\circ}(s), \mu_{u_2}^{G\circ}(s)\big). 
	\end{align*}
	For the first two terms on the right-hand side, Assumption \ref{ass: G Lipschitz} (i) implies
	\begin{equation*}
		(m_{u_1} - m_{u_2})^2 \le |u_1-u_2|^2 \int_0^1 L^2_v dv, \qquad |m_{u_1} - m_{u_2}| \le |u_1-u_2| \int_0^1 |L_v| dv,
	\end{equation*}
	and we have
	\begin{equation*}
		\E\big| M_{0}\big(\mu_{u_1}^G(s)\big) - M_{0}\big(\mu_{u_2}^G(s)\big) \big| \le 2 \sup_{u \in [0, 1]} \E \Vert X_u \Vert^2_{*, T} < \infty
	\end{equation*}
	from Lemma \ref{lem: unique pathwise solution}. For the last term, we discuss two cases.
	If $m_{{u_1}}= 0$ or $m_{u_2} = 0$, then
	\begin{align}
		\E[\Wop_2^2\big(\mu_{u_1}^G(s), \mu_{u_2}^G(s)\big)]
		&\le (m_{u_1} - m_{u_2})^2 + |m_{u_1} - m_{u_2}| \cdot \E\big|M_0\big(\mu_{u_1}^G(s)\big) - M_0\big(\mu_{u_2}^G(s)\big)\big|  \nonumber
		\\
		&\le C|u_1-u_2|.    \label{ineq: Wop u1u2 bound0}
	\end{align}
	Otherwise, Theorem 6.15 of \cite{villani2016optimal} yields
	\begin{equation}    \label{W bar bar bound}
		\Wc_2^2\big(\mu_{u_1}^{G\circ}(s), \mu_{u_2}^{G\circ}(s)\big) \le 2\int_{\R^d} |x|^2 \big|\mu_{u_1}^{G\circ}(s) - \mu_{u_2}^{G\circ}(s)\big|(dx)
		= 2 \int_0^1 \bigg| \frac{G_{u_1, v}}{m_{u_1}} - \frac{G_{u_2, v}}{m_{u_2}} \bigg| \int_{\R^d} |x|^2 \mu_v(s)(dx)dv.
	\end{equation}
	We now derive
	\begin{align*}
		m_{{u_1}}m_{{u_2}}\bigg| \frac{G_{u_1, v}}{m_{u_1}} - \frac{G_{u_2, v}}{m_{u_2}} \bigg|
		&= m_{{u_1}}m_{{u_2}}\bigg| \frac{G_{u_1, v}}{m_{u_1}} - \frac{G_{u_2, v}}{m_{u_1}} + \frac{G_{u_2, v}}{m_{u_1}} - \frac{G_{u_2, v}}{m_{u_2}} \bigg|
		\\ & \le m_{{u_2}}|G_{u_1, v}-G_{u_2, v}| + G_{u_2, v}|m_{u_2}-m_{u_1}|
		\\
		& \le |G_{u_1, v}-G_{u_2, v}| + |m_{u_2}-m_{u_1}|
		\le  |u_1-u_2| |L_v| + |u_1-u_2| \int_0^1 |L_v| dv,
	\end{align*}
	where the last inequality uses Assumption \ref{ass: G Lipschitz} (i). Thus, the inequality \eqref{W bar bar bound} becomes
	\begin{equation*}
		m_{{u_1}}m_{{u_2}}
		\E[\Wc_2^2\big(\mu_{u_1}^{G\circ}(s), \mu_{u_2}^{G\circ}(s)\big)]
		\le 2 |u_1-u_2| \Big(\sup_{u \in [0, 1]} \E \Vert X_u \Vert^2_{*, T} \Big) \int_0^1 |L_v|dv,
	\end{equation*}
	and then there exists $C>0$ that depends on the previous constants $K, \int_0^1|L_v|dv, \int_0^1|L_v|^2 dv$, and $\sup_{u \in [0, 1]} \E \Vert X_u \Vert^2_{*, T}$ satisfying
	\begin{equation}    \label{ineq: Wop u1u2 bound}
		\E[\Wop_2^2\big(\mu_{u_1}^{G}(s), \mu_{u_2}^{G}(s)\big)]
		\le C|u_1-u_2|, \qquad \forall \, s \in [0, T].
	\end{equation}
	Therefore, by combining \eqref{ineq: Wop u1u2 bound} (or \eqref{ineq: Wop u1u2 bound0}) with \eqref{ineq: 4th moment}, the inequality \eqref{ineq: u1 u2 bound} becomes for any $t \in [0, T]$
	\begin{align*}
		\E \left[ \Vert \tilde{X}_{u_1}-\tilde{X}_{u_2} \Vert^{2+\eps}_{*, t} \right]
		\le & C \E \left[ \big| \tilde{X}_{u_1}(0) - \tilde{X}_{u_2}(0) \big|^{2+\eps} \right]
		\\
		& \qquad + C_t \int_0^t \E \left[ \big|\tilde{X}_{u_1}(s) - \tilde{X}_{u_2}(s)\big|^{2+\eps} \right] ds + C_t |u_1-u_2|^{1 + \frac{\eps}{2}},
	\end{align*}
	and Grönwall's inequality yields
	\begin{align*}
		\E \left[ \Wc^{2+\eps}_2(\mu_{u_1},\mu_{u_2}) \right]
		&\le
		\E \left[ \Vert \tilde{X}_{u_1}-\tilde{X}_{u_2} \Vert^{2+\eps}_{*, T} \right]
		\le
		C \E \left[ \big| \tilde{X}_{u_1}(0) - \tilde{X}_{u_2}(0) \big|^{2+\eps} \right] + C|u_1-u_2|^{1 + \frac{\eps}{2}}
		\\
		&= C \E \left[ \E \big| \tilde{X}_{u_1}(0) - \tilde{X}_{u_2}(0) \big|^{2+\eps} \bigg| W\right]  + C|u_1-u_2|^{1 + \frac{\eps}{2}}.
	\end{align*}
	Here, the constant $C$ now depends on $T$. Taking the infimum over the conditional random variables satisfying $\mathcal{L}(X_{u_i}(0)|W) = \mu_{u_i}(0)$ for $i=1, 2$, we obtain
	\begin{equation}   \label{ineq: W^2_2}
		\E \left[ \Wc^{2+\eps}_2(\mu_{u_1},\mu_{u_2}) \right]
		\le C \E \left[ \Wc^{2+\eps}_{2+\eps} \big(\mu_{u_1}(0),\mu_{u_2}(0) \big) \right] + C|u_1-u_2|^{1 + \frac{\eps}{2}} \le C|u_1-u_2|^{1 + \frac{\eps}{2}}.
	\end{equation}
	Here, the last inequality uses Assumption \ref{ass: initial Wasserstein distance}, and note that $\mathcal{P}_2(\mathcal{C}^d)$ is complete and separable.

	Since $(\Pc_2(\Cc^d),\Wc_2)$ is complete and separable, the standard dyadic Kolmogorov-Chentsov/Borel-Cantelli argument applied to \eqref{ineq: W^2_2} yields a spatial modification $\{\tilde{\mu}_u\}_{u\in[0,1]}$ such that, for every $\gamma < \frac{\eps}{2(2+\eps)}$, there exists an event $\Omega_0 \in \mathcal{F}$ with $\P(\Omega_0)=1$ and
	\[
		\Wc_2\big(\tilde{\mu}_{u_1}(\omega),\tilde{\mu}_{u_2}(\omega)\big)
		\le C(\omega)|u_1-u_2|^\gamma, \qquad \forall\,u_1,u_2\in[0,1],\ \omega\in\Omega_0.
	\]
	For completeness, we briefly indicate the argument: one first obtains the corresponding estimate on the dyadic rationals by Markov's inequality and the Borel-Cantelli lemma, and then extends it to all of $[0,1]$ by completeness of $(\Pc_2(\Cc^d),\Wc_2)$.
	Finally, for each fixed $u \in [0, 1]$ and any dyadic sequence $u_n \to u$, Fatou's lemma and \eqref{ineq: W^2_2} imply
	\[
		\E \big[\Wc_2^{2+\eps}(\mu_u,\tilde{\mu}_u)\big]
		\le \liminf_{n\to\infty} \E \big[\Wc_2^{2+\eps}(\mu_u,\mu_{u_n})\big] = 0,
	\]
	so $\mu_u=\tilde{\mu}_u$ almost surely. This completes the proof.
\end{proof}

\medskip

\begin{proof} [Proof of Lemma \ref{lem : W2 joint law conv}]
	The proof consists of 3 parts.
	
	\textbf{Part 1:} $\{\Lc(\bar{\mu}^n,\bar{\mu})\}_{n \in \N}$ is tight in $\mathcal{P}(\mathcal{P}(\mathcal{C}^d) \x \mathcal{P}(\mathcal{C}^d))$. \\
	The tightness of $\{\Lc(\bar{\mu}^n,\bar{\mu})\}_{n \in \N}$ in $\mathcal{P}(\mathcal{P}(\mathcal{C}^d) \x \mathcal{P}(\mathcal{C}^d))$ is equivalent to the tightness of  $\{\Lc(\bar{\mu}^n)\}_{n \in \N}$ in $\mathcal{P}(\mathcal{P}(\mathcal{C}^d))$, which will hold true if and only if
	$\frac{1}{n}\sum_{i=1}^n \mathcal{L}(X_{i/n})$ is tight in $\mathcal{P}(\mathcal{C}^d)$ from Chapter I (2.5) in \cite{sznitman}.
	
	To show the latter condition, we first denote $\mathcal{T}$ the collection of all $\mathbb{F} = (\mathcal{F}_t)_{t \in [0, T]}$-stopping times. We obtain that there exists a constant $C>0$, independent of $n$, satisfying for all $i \in [n]$ and $n \in \N$
	\begin{align}
		& ~
		\sup_{\tau \in \mathcal{T}} \E \Big|X_{i/n}\big((\tau + \epsilon) \wedge T \big) - X_{i/n}(\tau) \Big|^2 \nonumber
		\\
		~ = & ~
		\sup_{\tau \in \mathcal{T}} \E \Big|
		\int_{\tau}^{(\tau + \varepsilon)\wedge T}
		\widetilde{\phi_p}(i/n) dt
		~ + ~
		\int_{\tau}^{(\tau + \varepsilon)\wedge T} \widetilde{\phi_b}(i/n) dB_{i/n}(t)
		~ + ~
		\int_{\tau}^{(\tau + \varepsilon)\wedge T} \widetilde{\phi_w}(i/n) dW(t) \Big|^2 \nonumber
		\\
		~ \le & ~
		C\sup_{\tau \in \mathcal{T}} \E \bigg[ 
		\int_{\tau}^{(\tau + \varepsilon)\wedge T}
		|\widetilde{\phi_p}(i/n)|^2+
		|\widetilde{\phi_b}(i/n)|^2+
		|\widetilde{\phi_w}(i/n)|^2 dt
		\bigg] 
		\le C \epsilon, \label{ineq: bounded sup stopping times}
	\end{align}
	where for each $u \in [0,1]$ and $\alpha = p, b, w$,
	\begin{align*}
		\widetilde{\phi_{\alpha}}(u)
		~ := ~
		\phi_{\alpha} \big(X_u(t), ~ \mu_{u}^G(t) \big).
	\end{align*}
    Moreover, Lemma \ref{lem: unique pathwise solution} and Markov's inequality yield
	\begin{align*}
		\sup_{n \in \N}\frac{1}{n}\sum_{i=1}^n \P\bigg[\sup_{t \in [0,T]} |X_{i/n}(t)| > a\bigg] &\le \frac{1}{a^{2+\eps}} \sup_{u \in [0,1]} E\big[\|X_u\|_{*,T}^{2+\eps}\big] \xlongrightarrow{a \to \infty} 0.
	\end{align*}
	Hence, the compact containment condition in Aldous's criterion (see, e.g., (16.22) of \cite{billingsley}) is satisfied. Together with \eqref{ineq: bounded sup stopping times}, this provides the tightness of $\frac{1}{n}\sum_{i=1}^n \mathcal{L}(X_{i/n})$ in $\mathcal{P}(\mathcal{C}^d)$ from Aldous's criterion (see, e.g., (16.22)-(16.23) of \cite{billingsley}).
	
	\smallskip
	
	\textbf{Part 2:} Weak convergence of 
	$ \Lc(\bar{\mu}^n,\bar{\mu})  \xlongrightarrow{n \to \infty} \Lc(\bar{\mu},\bar{\mu}) $. \\
	For each $m \in \N$, consider $(m + 1)$ many of arbitrary test functions $\{h_i\}_{i \in [m]} \subset C_b(\mathcal{C}^d)$, and $f \in C_b(\Cc^d)$. We shall show the convergence
	\begin{align}    
		\int_{\mathcal{P}(\mathcal{C}^d) \times \mathcal{P}(\mathcal{C}^d)}
		\prod_{i=1}^m \langle h_i, \nu \rangle \langle f,\nu' \rangle
		\Lc(\bar{\mu}^n,\bar{\mu}) &(d\nu, d\nu') 
		~~~\xlongrightarrow{n \to \infty}	\label{conv: test functions}
		\\ 
		&\int_{\mathcal{P}(\mathcal{C}^d) \times \mathcal{P}(\mathcal{C}^d)}
		\prod_{i=1}^m \langle h_i, \nu\rangle  \langle f,\nu' \rangle
		\Lc(\bar{\mu},\bar{\mu}) (d\nu, d\nu').		\nonumber
	\end{align}
	For simplicity, we consider the case $m = 2$, then the left-hand side of \eqref{conv: test functions} can be expressed as
	\begin{align*}
		& \quad \ \frac{1}{n^2}\sum_{i=1}^n \sum_{j=1}^n \E \Big[h_{1}(X_{i/n}) h_{2}(X_{j/n}) \langle f,\bar{\mu} \rangle\Big] 
		= \frac{1}{n^2} \sum_{i=1}^n \sum_{j=1}^n \E \Bigg[ \E \Big[ h_{1}(X_{i/n}) h_{2}(X_{j/n}) \big| W \Big] 
		\langle f,\bar{\mu} \rangle\Bigg]
		\\
		& = \frac{1}{n^2} \sum_{i=1}^n \E \Bigg[ \E \Big[ h_{1}(X_{i/n}) h_{2}(X_{i/n}) \big| W \Big] \langle f,\bar{\mu} \rangle \Bigg] 
		+ 
		\frac{2}{n^2} \sum_{i<j} \E \Bigg[ \E \Big[ h_{1}(X_{i/n}) h_{2}(X_{j/n}) \big| W \Big] \langle f,\bar{\mu} \rangle \Bigg]
		\\
		& = \frac{1}{n^2} \sum_{i=1}^n \E \Bigg[ \E \Big[ h_{1}(X_{i/n}) h_{2}(X_{i/n}) \big| W \Big] \langle f,\bar{\mu} \rangle\Bigg] + \frac{2}{n^2} \sum_{i<j} \E \Bigg[ \E \Big[ h_{1}(X_{i/n}) \big| W \Big] \E \Big[ h_{2}(X_{j/n}) \big| W \Big] \langle f,\bar{\mu} \rangle\Bigg]
		\\
		& = \frac{1}{n^2} \sum_{i=1}^n \E \Bigg[ \E \Big[ h_{1}(X_{i/n}) \big| W \Big] \E \Big[ h_{2}(X_{i/n}) \big| W \Big] \langle f,\bar{\mu} \rangle\Bigg] 
		+ \frac{2}{n^2} \sum_{i<j} \E \Bigg[ \E \Big[ h_{1}(X_{i/n}) \big| W \Big] \E \Big[ h_{2}(X_{j/n}) \big| W \Big] \langle f,\bar{\mu} \rangle\Bigg]
		\\
		& \qquad \qquad + \frac{1}{n^2} \sum_{i=1}^n \E \Bigg[ \bigg(\E \Big[ h_{1}(X_{i/n}) h_{2}(X_{i/n}) \big| W \Big] - \E \Big[ h_{1}(X_{i/n}) \big| W \Big] \E \Big[ h_{2}(X_{i/n}) \big| W \Big]\bigg)\langle f,\bar{\mu} \rangle \Bigg]
		\\
		&= \E \Bigg[ \bigg( \frac{1}{n} \sum_{i=1}^n \E \Big[ h_{1}(X_{i/n}) \big| W \Big] \bigg) \bigg( \frac{1}{n} \sum_{j=1}^n \E \Big[ h_{2}(X_{j/n}) \big| W \Big] \bigg) \langle f,\bar{\mu} \rangle \Bigg]
		\\
		& \qquad \qquad + \frac{1}{n^2} \sum_{i=1}^n \E \Bigg[ \bigg(\E \Big[ h_{1}(X_{i/n}) h_{2}(X_{i/n}) \big| W \Big] - \E \Big[ h_{1}(X_{i/n}) \big| W \Big] \E \Big[ h_{2}(X_{i/n}) \big| W \Big]\bigg)\langle f,\bar{\mu} \rangle \Bigg].
	\end{align*}
	Here, the third equality uses the conditional independence of $X^n_i$ and $X^n_j$ given $W$, whenever $i \neq j$. Since the functions $f, h_i$'s are bounded, say, by a constant $C>0$, the last term is bounded by
	\begin{equation*}
		\frac{1}{n^2} \sum_{i=1}^n \E \Bigg[ \bigg(\E \Big[ h_{1}(X_{i/n}) h_{2}(X_{i/n}) \big| W \Big] - \E \Big[ h_{1}(X_{i/n}) \big| W \Big] \E \Big[ h_{2}(X_{i/n}) \big| W \Big]\bigg)\langle f,\bar{\mu} \rangle \Bigg]
		\le 
		\frac{2C^3n}{n^2} = \frac{2C^3}{n}.
	\end{equation*}
	Taking limit $n \to \infty$, we can obtain the following convergence from Lemma \ref{lem: continuity of average measure}
	\begin{equation}    \label{conv into integral}
		\frac{1}{n} \sum_{i=1}^n \E \Big[ h_{1}(X_{i/n}) \big| W \Big] \xlongrightarrow{n \to \infty}  \int_0^1 \E \big[ h_1(X_u) | W \big] du,
		~\P~\mbox{-a.s.}
	\end{equation}
	
	Therefore, from the Dominated Convergence Theorem, we obtain the convergence
	\begin{align*}
		\frac{1}{n^2} &\sum_{i=1}^n \sum_{j=1}^n \E \Big[h_{1}(X_{i/n}) h_{2}(X_{j/n})\langle f,\bar{\mu} \rangle\Big] \xlongrightarrow{n \to \infty} \E \bigg[ \langle h_1,\bar{\mu} \rangle\langle h_2,\bar{\mu} \rangle \langle f,\bar{\mu} \rangle \bigg] 
	\end{align*}
	and the last limit is equal to the right-hand side of \eqref{conv: test functions}. The general case $m > 2$ can be proven in a similar manner.
	Then, by Theorem \ref{thm: characterization2}, we can conclude the proof of part 2.
	
	\smallskip
	
	\textbf{Part 3:} Uniform integrability of $\Lc(\bar{\mu}^n,\mub)$. \\
	We shall show that
	\begin{align*}
		\lim_{a \to \infty} \sup_{n \in \N} \, 
		\E \bigg[ 
		\Big(\Wc^2_2(\bar{\mu}^n, \delta_0) + \Wc^2_2(\bar{\mu}, \delta_0) \Big)
		\mathbbm{1}_{\big\{\sqrt{\Wc^2_2(\bar{\mu}^n, \delta_0) + \Wc^2_2(\bar{\mu}, \delta_0) } \ge a \big\}} \bigg] = 0.
	\end{align*}
	From Markov's inequality, we derive
	\begin{align*}
		&\E \bigg[ 
		\Big(\Wc^2_2(\bar{\mu}^n, \delta_0) + \Wc^2_2(\bar{\mu}, \delta_0)\Big)
		\mathbbm{1}_{\big\{\sqrt{\Wc^2_2(\bar{\mu}^n, \delta_0) + \Wc^2_2(\bar{\mu}, \delta_0) } \ge a \big\}} \bigg]
		\\
		&\qquad \qquad \qquad \qquad \qquad \qquad \le \frac{1}{a^{\eps}} \sup_{n \in \N} \, \E \Big[ \big(\Wc^2_2(\bar{\mu}^n, \delta_0) + \Wc^2_2(\bar{\mu}, \delta_0) \big)^{1 + \eps/2}\Big]
		\\
		& \qquad \qquad \qquad \qquad \qquad \qquad \le \frac{C}{a^{\eps}} \sup_{n \in \N} \, \bigg( \frac{1}{n} \sum_{i=1}^n \E \Big[ \sup_{t \in [0, T]} |X_{i/n}(t)|^{2 + \eps} \Big] \bigg)
		+ \frac{C}{a^{\eps}} \E \big[\Wc^{2 + \eps}_2(\bar{\mu}, \delta_0) \big].
	\end{align*}
	Similar to \eqref{ineq: bounded sup stopping times}, thanks to the boundedness of $\phi$'s, there exists a constant $C>0$ such that the first term on the right-hand side is bounded by $CT/a$. Sending $a \to \infty$ yields the desired uniform integrability.
	
	Lemma \ref{lem : villani} (ii) with the results in Parts 2 and 3 proves the result \eqref{conv: W2 ppcd}.
\end{proof}

\medskip

\begin{proof} [Proof of Proposition \ref{thm: weighted mean W2 continuity}]
	We first denote, for any $i \in [n]$ and $n \in \N$,
	\[
		\bar{\mu}^n_i := \frac{1}{n}\sum_{j = 1}^n G^n_{i,j}\delta_{X_{j/n}}, \qquad \nu_n := \frac{1}{n}\sum_{i = 1}^n \Lc(\bar{\mu}^n_i, \mu^G_{i/n}), \qquad \nu := \int_0^1 \Lc(\mu_u^G, \mu_u^G)\,du.
	\]
	As in the proof of Proposition \ref{thm: bar mu_n to bar mu}, it is sufficient to prove that
	\[
		\Wc_2(\nu_n,\nu) \xlongrightarrow{n \to \infty} 0.
	\]

	The proof follows the same three-step argument as in Lemma \ref{lem : W2 joint law conv}; we only indicate the modifications.

	\smallskip

	\noindent\textbf{Part 1: Tightness.}
	Since $0 \le G \le 1$, the masses of $\bar{\mu}^n_i$ and $\bar{\mu}_{i/n}$ are bounded by $1$, and their second moments are uniformly controlled by Lemma \ref{lem: unique pathwise solution}. Hence the families
	\[
		\bigg\{\frac{1}{n}\sum_{j=1}^n G^n_{i,j}\Lc(X_{j/n})\bigg\}_{n \in \N,\, i \in [n]} \quad\text{and}\quad \bigg\{\int_0^1 G_{i/n,v}\Lc(X_v)\,dv\bigg\}_{n \in \N,\, i \in [n]}
	\]
	are tight in $\Mc_{2,K}(\Cc^d)$ for some $K>0$, by the same Aldous-type argument as in Lemma~\ref{lem : W2 joint law conv}.

	\smallskip

	\noindent\textbf{Part 2: Weak convergence.}
	Testing against functions of the form
	\[
		(\nu,\nu') \longmapsto \prod_{j=1}^m \langle h_j,\nu\rangle \prod_{k=1}^\ell \langle f_k,\nu'\rangle, \qquad h_j,f_k \in C_b(\Cc^d),
	\]
	one obtains the same expansion as in Lemma \ref{lem : W2 joint law conv}, except that each unweighted average
	\[
		\frac{1}{n}\sum_{i=1}^n \E[h(X_{i/n})\mid W]
	\]
	is replaced by the weighted average
	\[
		\frac{1}{n}\sum_{i=1}^n G^n_{k,i}\E[h(X_{i/n})\mid W].
	\]
	The key additional convergence result we need is
	\[
		\frac{1}{n}\sum_{i=1}^n G^n_{k,i}\E[h(X_{i/n})\mid W] \xlongrightarrow{n \to \infty,\, k/n \to u} \int_0^1 G_{u,v}\E[h(X_v)\mid W]\,dv,
	\]
	which follows from Lemma \ref{lem: continuity of average measure}, Riemann-sum convergence, and Lemma 8.11 of \cite{lovasz}. 
    The auxiliary term for this convergence is
    \begin{align*}
        \frac{1}{n} \sum_{i=1}^n G_{k/n,i/n}\E \Big[ h_{1}(X_{{i/n}}) \big| W \Big].
    \end{align*}
    Therefore, $\nu_n \Rightarrow \nu$.

	\smallskip

	\noindent\textbf{Part 3: Uniform integrability.}
	The same argument as in Part 3 of Lemma \ref{lem : W2 joint law conv}, with $\Wc_2$ replaced by $\Wop_2$, yields the uniform integrability of $\{\nu_n\}_{n\in\N}$. Since $\Mc_{2,K}(\Cc^d)$ is Polish by Theorem~\ref{thm:Polish}, Lemma~\ref{lem : villani} implies that $\Wc_2(\nu_n,\nu)\to 0$. This proves the claim.
\end{proof}

\medskip

\begin{proof}  [Proof of Theorem \ref{thm: relaxed_weighted mean W2 continuity}]
	This proof consists of two parts.
	
	\smallskip
	
	{\noindent \textbf{Part 1:} Approximation of the graphon $G$.}
    
    Fix $\eta > 0$. Since $G$ is uniformly continuous on $[0,1]^2$, we may choose a Lipschitz graphon $\tilde G : [0,1]^2 \to [0,1]$ such that
    \[
    	\|G-\tilde G\|_{L^\infty} \le \eta.
    \]
    For each $n \in \N$, let $\mathcal P_n=\{I_i\times I_j\}_{i,j=1}^n$, where
    \[
    	I_i:=\Big(\frac{i-1}{n},\frac{i}{n}\Big], \qquad i \in [n],
    \]
    and let $P_n:L^1([0,1]^2)\to \mathcal S_n$ denote the conditional expectation onto the step functions that are constant on each atom of $\mathcal P_n$.
    Define $\tilde G^n := P_n\tilde G + G^n - P_nG$. Then, since $P_n$ is an $L^1$-contraction,
    \[
    	\sup_{n \in \N}\|G^n-\tilde G^n\|_{L^1} = \sup_{n \in \N}\|P_n(\tilde G-G)\|_{L^1} \le \|\tilde G-G\|_{L^1} \le \eta.
    \]
    Moreover,
    \[
    	\|\tilde G^n-\tilde G\|_\square \le \|P_n\tilde G-\tilde G\|_{L^1} + \|G^n-G\|_\square + \|P_nG-G\|_{L^1} \xlongrightarrow{n\to\infty} 0,
    \]
    because $P_n\tilde G \to \tilde G$ and $P_nG \to G$ in $L^1$, while $\|G^n-G\|_\square \to 0$ by Assumption \ref{ass:G^n_and_G}.
    Thus, the required approximation exists.
	
	\smallskip
	
	{\noindent \textbf{Part 2:} Estimation.}
	
	By Proposition \ref{thm: weighted mean W2 continuity}, it follows that
	\begin{equation*}
		\frac{1}{n}\sum_{i = 1}^n \E\Bigg[ \Wop^2_2\bigg(\frac{1}{n}\sum_{j = 1}^n \Gt^n_{i,j} \delta_{\Xt_{{j/n}}}, ~ \mut^{\Gt}_{i/n}\bigg)\Bigg] \xlongrightarrow{n \to \infty} 0,
	\end{equation*}
	where $\mut_v := \Lc(\Xt_v|W)$ and $\{\Xt_v\}_{v\in [0,1]}$ is the solution to the graphon system \eqref{eq : graphon particle system with common noise} with $\Gt$.
	By triangle inequality, it remains to estimate the terms
	\begin{align}\label{eq: distance between n particle}
		\frac{1}{n}\sum_{i = 1}^n \E\Bigg[ \Wop^2_2\bigg(\frac{1}{n}\sum_{j = 1}^n \Gt^n_{i,j} \delta_{\Xt_{{j/n}}}, ~ \frac{1}{n}\sum_{j = 1}^n G^n_{i,j} \delta_{X_{{j/n}}}\bigg)\Bigg],
		\\ \label{eq: distance between integral}
		\frac{1}{n}\sum_{i = 1}^n \E\bigg[ \Wop^2_2\Big(\mu^G_{i/n}, ~ \mut^{\Gt}_{i/n}\Big)\bigg].
	\end{align}
	Again, by triangle inequality, the term \eqref{eq: distance between n particle} is bounded by
	\begin{align*}
		&
		\frac{1}{n}\sum_{i = 1}^n \E\Bigg[ \Wop^2_2\bigg(\frac{1}{n}\sum_{j = 1}^n \Gt^n_{i,j} \delta_{\Xt_{{j/n}}}, ~ \frac{1}{n}\sum_{j = 1}^n G^n_{i,j} \delta_{X_{{j/n}}}\bigg)\Bigg]
		\\
		\le ~&
		\frac{2}{n}\sum_{i = 1}^n \E\Bigg[ \Wop^2_2\bigg(\frac{1}{n}\sum_{j = 1}^n \Gt^n_{i,j} \delta_{\Xt_{{j/n}}}, ~ \frac{1}{n}\sum_{j = 1}^n \Gt^n_{i,j} \delta_{X_{{j/n}}}\bigg)\Bigg]
		\\& ~~~~~~~+
		\frac{2}{n}\sum_{i = 1}^n \E\Bigg[ \Wop^2_2\bigg(\frac{1}{n}\sum_{j = 1}^n \Gt^n_{i,j} \delta_{X_{{j/n}}}, ~ \frac{1}{n}\sum_{j = 1}^n G^n_{i,j} \delta_{X_{{j/n}}}\bigg)\Bigg]
		\\
		\le ~ &
		\frac{2}{n^2}\sum_{i,j = 1}^n \Gt^n_{i,j}\E\bigg[\sup_{t \in [0,T]}|X_{j/n}(t) - \Xt_{j/n}(t)|^2\bigg] 
		+ \frac{2K}{n^2}\sum_{i,j = 1}^n|\Gt^n_{i,j} - G^n_{i,j}|
		\\
		\le ~ &
		2\sup_{v \in [0,1]}\E\bigg[\sup_{t \in [0,T]}|X_{v}(t) - \Xt_{v}(t)|^2\bigg] 
		~+~
		2K\|G^n - \Gt^n\|_{L^1},
	\end{align*}
	where the second inequality holds by estimate for Wasserstein distance between empirical measures and similar argument of $\Gc^n_i$ in Theorem \ref{thm: convergence rates}, for some constant $K$, which may vary from line to line.
	
	Similarly, the term \eqref{eq: distance between integral} is bounded by
	\begin{align*}
		&
		\frac{1}{n}\sum_{i = 1}^n \E\bigg[ \Wop^2_2\Big(\mu^G_{i/n}, ~ \mut^{\Gt}_{i/n}\Big)\bigg]
		\\
		\le ~ &
		\frac{2}{n}\sum_{i = 1}^n \E\bigg[ \Wop^2_2\Big(\mu^G_{i/n}, ~ \mut^{G}_{i/n}\Big)\bigg]
		+\frac{2}{n}\sum_{i = 1}^n \E\bigg[ \Wop^2_2\Big(\mut^G_{i/n}, ~ \mut^{\Gt}_{i/n}\Big)\bigg]
		\\ \le ~ &
		2\int_0^1\E[\Wc_2^2(\mu_v,\mut_v)]\,dv ~ + ~ \frac{2K}{n}\sum_{i = 1}^n\int_0^1|G_{i/n,v} - \Gt_{i/n,v}|\,dv
		\\ \le ~ &
		2\int_0^1\E\bigg[\sup_{t \in [0,T]}|X_{v}(t) - \Xt_{v}(t)|^2\bigg] \,dv ~ + ~ \frac{2K}{n}\sum_{i = 1}^n\int_0^1|G_{i/n,v} - \Gt_{i/n,v}|\,dv.
	\end{align*}
	By the continuity of $G$, we have
	\begin{align*}
		\lim_{n \to \infty}\frac{1}{n}\sum_{i = 1}^n\int_0^1|G_{i/n,v} - \Gt_{i/n,v}|\,dv = \|G - \Gt\|_{L^1} \le \eta.
	\end{align*}
	Finally, the following term remains to be estimated: 
	$$\sup_{v \in [0,1]}\E\Big[\sup_{t \in [0,T]}|X_{v}(t) - \Xt_{v}(t)|^2\Big].$$
	Indeed, there exists some constant $K$, which may vary from line to line, such that
	\begin{align*}
		&
		\sup_{v \in [0,1]}\E\Big[\sup_{t \in [0,T]}|X_{v}(t) - \Xt_{v}(t)|^2\Big]
		\\ \le ~ & 
		K\sup_{v \in [0,1]}\E\bigg[\int_0^T\sum_{\alpha = p,b,w}\bigg|\phi_\alpha\big(X_v(t), \mu^G_{v}(t) \big) - \phi_\alpha\big(\Xt_v(t), \tilde{\mu}^{\tilde{G}}_{v}(t)\big)\bigg|^2 dt\bigg]
		\\ \le  ~ &
		K\sup_{v \in [0,1]}\E\bigg[\int_0^T \sup_{s \in [0,t]}|X_v(s) - \Xt_v(s)|^2 + \Wop^2_2\big(\mu^{G}_{v}(t), ~ \tilde{\mu}^{\tilde{G}}_{v}(t)\big) dt\bigg]
		\\ \le  ~ &
		K\int_0^T \sup_{v \in [0,1]}\E\bigg[\sup_{s \in [0,t]}|X_v(s) - \Xt_v(s)|^2\bigg] dt + K\sup_{v \in [0,1]}\int_0^1|G_{v,u} - \Gt_{v,u}|du
		\\ \le ~ &
		K\sup_{v \in [0,1]}\int_0^1|G_{v,u} - \Gt_{v,u}|du ~ \le ~ K\eta,
	\end{align*}
	where the first inequality holds true by Burkholder-Davis-Gundy inequality, the second one by Lemma \ref{lem: basic coefficient estimate}, the third one by a similar argument as in estimate of \eqref{eq: distance between integral}, the fourth one by Grönwall's inequality, and the last one by that $\|G - \Gt\|_{L^\infty} \le \eta$.
	
	Therefore, we conclude that for any $\eta > 0$, there exists some constant $K$, independent of $n$ and $\eta$, such that
	\begin{align*}
		\limsup_{n \to \infty}\frac{1}{n}\sum_{i = 1}^n \E\Bigg[ \Wop^2_2\bigg(\frac{1}{n}\sum_{j = 1}^n \Gt^n_{i,j} \delta_{\Xt_{{j/n}}}, ~ \tilde{\mu}^{\tilde{G}}_{i/n}(t) \bigg)\Bigg]
		\le K\eta.
	\end{align*}
	This proves the result.
\end{proof}

\medskip

\begin{proof} [Proof of Theorem \ref{thm: weighted mean W2 squared}]
	We first denote for any $i \in [n]$, $n \in \N$ and $t \in [0, T]$
	\begin{align*}
		\bar{\mu}^n_i := \frac{1}{n}\sum_{j = 1}^n G^n_{i,j}\delta_{X^n_j}, \qquad
		\bar{\mu}^n_i(t) := \frac{1}{n} \sum_{j = 1}^n G^n_{i,j}\delta_{X^n_j(t)}.
	\end{align*}

    By Burkholder-Davis-Gundy inequality and Lemma \ref{lem: basic coefficient estimate}, we obtain
	\begin{align*}
		&\E \Big[\sup_{s \in [0,T]} \big| X^n_j(s) - X_{j/n}(s) \big|^2 \Big]
		\\
		& \qquad \qquad \le C\int_0^T \E\bigg[\sup_{s \in [0,t]} \big|X^n_j(s) - X_{j/n}(s)\big|^2\bigg] \, dt +C\E\bigg[\int_0^T\Wop^2_2 \big(\bar{\mu}^n_j(t), \mu^G_{j/n}(t)\big) \, dt \bigg].
	\end{align*}
	Then, by Grönwall's inequality, it follows that
	\begin{align}   \label{ineq: W2 bound}
		\E\bigg[\sup_{t \in [0,T]} \big|X^n_j(t) - X_{j/n}(t) \big|^2 \bigg] ~ \le ~ C \E \bigg[\int_0^T \Wop^2_2 \big(\bar{\mu}^n_j(t), \mu^G_{j/n}(t)\big) \, dt \bigg].
	\end{align}
	On the other hand, we obtain from the triangle inequality for $\Wop$ between the positive measures of $\mathcal{M}_2(\Cc^d)$
	\begin{align*}
		& ~ \frac{1}{n}\sum_{i = 1}^n \E\bigg[\Wop^2_2(\bar{\mu}^n_i,\mu^G_{i/n})\bigg]
		\\
		\le & ~ \frac{2}{n}\sum_{i = 1}^n \E\bigg[ \Wop^2_2\bigg(\frac{1}{n}\sum_{j = 1}^n G^n_{i,j}\delta_{X^n_j}, \frac{1}{n}\sum_{j = 1}^n G^n_{i,j}\delta_{X_{j/n}}\bigg)\bigg]
		+ \frac{2}{n}\sum_{i = 1}^n \E \bigg[\Wop^2_2\bigg( \frac{1}{n} \sum_{j = 1}^n G^n_{i,j} \delta_{X_{j/n}}, \mu^G_{i/n} \bigg) \bigg].
	\end{align*}
	Writing the common total masses of the two measures as
	\begin{equation*}
		\bar{G}^n_i := \frac{1}{n}\sum_{j=1}^n G^n_{i, j} = m_{\frac{1}{n}\sum_{j = 1}^n G^n_{i,j}\delta_{X^n_j}} = m_{\frac{1}{n}\sum_{j = 1}^n G^n_{i,j}\delta_{X_{j/n}}},
	\end{equation*}
	when $\bar{G}^n_i > 0$,
	the term inside the first expectation can be expressed as
	\begin{align*}
		&\Wop^2_2\bigg(\frac{1}{n}\sum_{j = 1}^n G^n_{i,j}\delta_{X^n_j}, \frac{1}{n}\sum_{j = 1}^n G^n_{i,j}\delta_{X_{j/n}}\bigg)
		= \big( \bar{G}^n_i \big)^2 \Wc^2_2 \bigg( \frac{\frac{1}{n}\sum_{j = 1}^n G^n_{i,j}\delta_{X^n_j}}{\bar{G}^n_i}, \frac{\frac{1}{n}\sum_{j = 1}^n G^n_{i,j}\delta_{X_{j/n}}}{\bar{G}^n_i} \bigg)
		\\
		&\le \big(\bar{G}^n_i\big) \frac{1}{n}\sum_{j=1}^n G^n_{i, j} \sup_{t \in [0, T]} \big|X^n_j(t) - X_{j/n}(t)\big|^2
		\le \frac{1}{n}\sum_{j=1}^n G^n_{i, j} \sup_{t \in [0, T]} \big|X^n_j(t) - X_{j/n}(t)\big|^2.
	\end{align*}
	Here, the first equality follows from \eqref{eq: WOP}, the first inequality uses the property of Wasserstein distance between the empirical measures with the coupling $\gamma = \frac{\frac{1}{n}\sum_{j=1}^n G^n_{i, j}\delta_{(x^n_j, X_{j/n})}}{\bar{G}^n_i}$, and the last inequality follows from the bound $\bar{G}^n_{i} \le 1$. 
	When $\bar{G}^n_i = 0$, the inequality still holds since they are all $0$.
	Therefore, we have
	\begin{align}
		& ~ \frac{1}{n}\sum_{i = 1}^n \E\bigg[\Wop^2_2(\bar{\mu}^n_i, \mu^G_{i/n})\bigg]     \label{ineq: WOP before Grönwall}
		\\
		\le & ~ \frac{2}{n^2} \sum_{i,j = 1}^n G^n_{i,j} \E\bigg[\sup_{t \in [0,T]}|X^n_j(t) - X_{j/n}(t)|^2\bigg]
		+\frac{2}{n}\sum_{i = 1}^n \E\bigg[\Wop^2_2\bigg(\frac{1}{n}\sum_{j = 1}^n G^n_{i,j} \delta_{X_{j/n}},\mu^G_{i/n}\bigg)\bigg]          \nonumber
		\\ \le & ~
		\frac{2C}{n^2}\sum_{i,j = 1}^n G^n_{i,j} \E\bigg[\int_0^T\Wop^2_2\big(\bar{\mu}^n_j(t),\mu^G_{j/n}(t)\big)dt\bigg]
		+ \frac{2}{n}\sum_{i = 1}^n \E\bigg[\Wop^2_2\bigg(\frac{1}{n}\sum_{j = 1}^n G^n_{i,j} \delta_{X_{j/n}},\mu^G_{i/n}\bigg)\bigg]          \nonumber
		\\ \le & ~
		2C\int_0^T\frac{1}{n}\sum_{j = 1}^n \E\Big[\Wop^2_2\big(\bar{\mu}^n_j(t),\mu^G_{j/n}(t)\big)\Big]dt
		+\frac{2}{n}\sum_{i = 1}^n \E\bigg[\Wop^2_2\bigg(\frac{1}{n}\sum_{j = 1}^n G^n_{i,j} \delta_{X_{j/n}},\mu^G_{i/n}\bigg)\bigg],          \nonumber
	\end{align}
	where the second inequality uses \eqref{ineq: W2 bound} and the last one follows from the bound $G^n_{i, j} \le 1$.
	
	If we again write the common total masses of the two positive measures as
	\begin{equation*}
		\bar{G}^n_i := \frac{1}{n}\sum_{j=1}^n G^n_{i, j} = m_{\bar{\mu}^n_i} = m_{\bar{\mu}^n_i(t)}, \qquad \bar{G}_{{i/n}} := m_{\mu^G_{i/n}} = m_{\mu^G_{i/n}(t)}, \qquad \forall \, t \in [0, T],
	\end{equation*}
	respectively, and the probability measures
	\begin{equation*}
		\bar{\mu}^{\circ n}_i := \frac{\bar{\mu}^n_i}{\bar{G}^n_i}, \qquad \bar{\mu}^{\circ n}_i(t) := \frac{\bar{\mu}^n_i(t)}{\bar{G}^n_i},
        \qquad \bar{\mu}^\circ_{{i/n}} = \frac{\mu^G_{i/n}}{\bar{G}^{{i/n}}},
        \qquad \bar{\mu}^\circ_{{i/n}}(t) = \frac{\mu^G_{i/n}(t)}{\bar{G}^{{i/n}}}, \qquad \forall \, i \in [n], ~n \in \N,
	\end{equation*}
	the definition of $\Wop_2$ gives for every $t \in [0, T]$
	\begin{align}
		\Wop^2_2 \big(\bar{\mu}^n_i(t), \mu^G_{i/n}(t) \big) &= \big(\bar{G}^n_i - \bar{G}_{{i/n}} \big)^2 + \Wc^2_2 \Big( T_{\bar{G}^n_i}\# \bar{\mu}^{\circ n}_i(t), ~ T_{\bar{G}_{{i/n}}}\#\bar{\mu}^\circ_{{i/n}}(t)\Big)   \label{ineq: two WOPs}
		\\
		&\le \big(\bar{G}^n_i - \bar{G}_{{i/n}} \big)^2 + \Wc^2_2 \Big( T_{\bar{G}^n_i}\# \bar{\mu}^{\circ n}_i, ~ T_{\bar{G}_{{i/n}}}\#\bar{\mu}^\circ_{{i/n}}\Big)
		= \Wop^2_2 \big(\bar{\mu}^n_i, \mu^G_{i/n} \big),       \nonumber
	\end{align}
	where the notation $T_a\# \nu$ is the pushforward of $\nu$ by $T_a(x) = ax$ (taking the reference point $x_0 = 0$) for $\nu$ either in $\mathcal{M}_2(\R^d)$ or in $\mathcal{M}_2(\mathcal{C}^d)$. Here, the above inequality follows from
	\begin{align*}
		\sup_{t \in [0, T]}\Wc^2_2 \big(\mu(t), \nu(t) \big) \le \Wc^2_2(\mu, \nu)
	\end{align*}
	for any $\mu, \nu \in \mathcal{P}_2(\mathcal{C}^d)$, since we have a series of inequalities
	\begin{align*}
		\Wc_2^2 \big(\mu(t), \nu(t)\big)
		\le
		\E[|X(t) - Y(t)|^2]
		\le
		\E[\sup_{t \in [0, T]} |X(t) - Y(t)|^2] = \E \Vert X-Y \Vert^2
	\end{align*}
	for arbitrary $\Cc^d$-valued random variables $X, Y$ with $\mu = \Lc(X)$, $\nu = \Lc(Y)$ with $\mu(t) = \Lc(X(t))$, $\nu(t) = \Lc(Y(t))$ for any $t \in [0, T]$, along with the definition of 2-Wasserstein distance and the arbitrariness of $X$, $Y$, and $t$.
	
	Plugging \eqref{ineq: two WOPs} into \eqref{ineq: WOP before Grönwall} and using Grönwall's inequality, we have
	\begin{align*}
		~ \frac{1}{n}\sum_{i = 1}^n
		\E\bigg[\Wop^2_2(\bar{\mu}^n_i,\mu^G_{i/n})\bigg]
		\le  ~
		\frac{C}{n}\sum_{i = 1}^n \E \bigg[\Wop^2_2\bigg(\frac{1}{n}\sum_{j = 1}^n G^n_{i,j} \delta_{X_{j/n}},\mu^G_{i/n}\bigg)\bigg],
	\end{align*}
	where the right-hand side tends to $0$ as $n$ goes to $\infty$ from Theorem \ref{thm: relaxed_weighted mean W2 continuity}. This proves \eqref{conv: weighted mean W2 squared}.
\end{proof}

\medskip

\begin{proof} [Proof of Proposition \ref{thm: W2 to zero}]
	Using the triangle inequality, the property of Wasserstein distance between two empirical measures, and the bound \eqref{ineq: W2 bound}, we derive
	\begin{align*}
		\E \big[\Wc^2_2(\mu^n,\bar{\mu}) \big]
		\le & ~ 2 \E \big[\Wc^2_2(\mu^n, \bar{\mu}^n)\big]
		+ 2 \E\big[\Wc^2_2(\bar{\mu}^n, \bar{\mu})\big]
		\\
		\le & ~ \frac{2}{n} \sum_{i = 1}^n \E\bigg[\sup_{t \in [0,T]} \big|X^n_i(t) - X_{i/n}(t) \big|^2 \bigg]
		+ 2 \E\big[\Wc^2_2(\bar{\mu}^n, \bar{\mu})\big]
		\\
		\le & ~ \frac{C}{n}\sum_{i = 1}^n \E\bigg[\int_0^T \Wop^2_2 \big(\bar{\mu}^n_i(t), \mu^G_{i/n}(t) \big) dt \bigg] + 2 \E \big[\Wc^2_2(\bar{\mu}^n, \bar{\mu})\big]
		\\
		\le & ~ C\int_0^T\frac{1}{n}\sum_{j = 1}^n\E\Big[\Wop^2_2 \big(\bar{\mu}^n_j(t), \mu^G_{j/n}(t) \big) \Big] dt
		+ 2\E[\Wc^2_2(\bar{\mu}^n, \bar{\mu})].
	\end{align*}
	Thanks to Proposition \ref{thm: bar mu_n to bar mu} and Theorem \ref{thm: weighted mean W2 squared}, the last term tends to $0$, as $n$ goes to $\infty$, which proves \eqref{conv: mean W2 squared}.
\end{proof}

\medskip

\begin{proof} [Proof of Theorem \ref{thm: relaxed_weighted mean W2 squared}]
	The proof is similar to that of Theorem \ref{thm: relaxed_weighted mean W2 continuity}. For a given measurable graphon $G$ and any constant $\eta > 0$, there exists a continuous graphon $\Gt : [0,1]^2 \longrightarrow [0,1]$ with
	\begin{align*}
		\|G - \Gt\|_{L^1} \le \eta,
	\end{align*}
	and a sequence of discrete graphons $\Gt^n: [n] \x [n] \longrightarrow [0,1]$ for each $n \in \N$ such that
	\begin{align*}
		\sup_{n \in \N}\|G^n - \Gt^n\|_{L^1} \le \eta \qquad \text{and} \qquad
		\quad \lim_{n \to \infty} \|\Gt^n - \Gt\|_\square ~ = ~ 0.
	\end{align*}
	Let $\{\Xt_v\}_{v\in [0,1]}$ be the solution to the graphon system \eqref{eq : graphon particle system with common noise} with $\Gt$, 
	$\{\Xt^n_i\}_{i\in [n]}$ be the solution to the finite particle system \eqref{eq : finite particle system with common noise} with $\Gt^n$, and define $\mut_v := \Lc(\Xt_v|W)$. 
	By Proposition \ref{thm: W2 to zero}, we have
	\begin{align*}
		\E\Bigg[ \Wc^2_2\bigg(\frac{1}{n}\sum_{j = 1}^n\delta_{\Xt^n_j}, \int_{0}^1\mut_vdv \bigg) \Bigg] \xlongrightarrow{n \to \infty} 0.
	\end{align*}
	
	We now try to estimate the two terms
	\begin{align*}
		\E\Bigg[ \Wc^2_2\bigg(\frac{1}{n}\sum_{j = 1}^n\delta_{X^n_j},\frac{1}{n}\sum_{j = 1}^n\delta_{\Xt^n_j} \bigg) \Bigg],
		\qquad
		\E\Bigg[ \Wc^2_2\bigg(\int_{0}^1\mu_vdv, \int_{0}^1\mut_vdv \bigg) \Bigg].
	\end{align*}
    For the first term, by the property of Wasserstein distance between empirical measures, Burkholder-Davis-Gundy inequality, Lemma \ref{lem: basic coefficient estimate}, and Grönwall's inequality, 
	\begin{align*}
		& \E\Bigg[ \Wc^2_2\bigg(\frac{1}{n}\sum_{j = 1}^n\delta_{X^n_j},\frac{1}{n}\sum_{j = 1}^n\delta_{\Xt^n_j} \bigg) \Bigg]
		\le \frac{1}{n}\sum_{j = 1}^n \E\bigg[\sup_{t \in [0,T]}|X^n_j(t) - \Xt^n_j(t)|^2\bigg]
		\\ \le ~ &
		\frac{K}{n}\sum_{j = 1}^n\E\bigg[\int_0^T \sup_{s \in [0,t]}|X^n_j(s) - \Xt^n_j(s)|^2 + \Wop_2^2\bigg(\frac{1}{n}\sum_{k = 1}^n G^n_{j,k}\delta_{X^n_k(t)},
        \frac{1}{n}\sum_{k = 1}^n \Gt^n_{j,k}\delta_{\Xt^n_k(t)}\bigg)dt\bigg]
		\\ \le ~ &
		K\int_0^T \frac{1}{n}\sum_{j = 1}^n \E\bigg[\sup_{s \in [0,t]}|X^n_j(s) - \Xt^n_j(s)|^2\bigg] dt ~+~ 2K\|G^n - \Gt^n\|_{L^1}
		\le K\|G^n - \Gt^n\|_{L^1} \le K\eta.
	\end{align*}
	For the second term, we have a similar estimate that
	\begin{align*}
		\E\Bigg[ \Wc^2_2\bigg(\int_{0}^1\mu_vdv, \int_{0}^1\mut_vdv \bigg) \Bigg] \le \int_0^1\E[\Wc^2_2(\mut_v,\mu_v)]dv \le K\|G - \Gt\|_{L^1} \le K\eta.
	\end{align*}
	Therefore, for any $\eta > 0$, there exists some constant $K$ independent of $n$ and $\eta$ such that
	\begin{align*}
		\limsup_{n \to \infty} \E\Bigg[ \Wc^2_2\bigg(\frac{1}{n}\sum_{j = 1}^n\delta_{X^n_j}, \int_{0}^1\mu_vdv \bigg) \Bigg]
		\le K\eta.
	\end{align*}
\end{proof}

\medskip

\begin{proof} [Proof of Lemma \ref{lem:sampling_rate}]
	\textbf{Part 1: Reduction to the probability measure}
	
	We define $m_{i,n} := \frac{1}{n}\sum_{j = 1}^nG^n_{i,j}$, then
	by (ii) of Lemma \ref{Lem: WOP R^d}, one has that for $m_{i,n} > 0$,
	\begin{align*}
		\E\Bigg[\Wop^2_2\bigg(\frac{1}{n}\sum_{j = 1}^nG^n_{i,j}\delta_{X_{{j/n}}(t)}, 
		&\frac{1}{n}\sum_{j = 1}^nG^n_{i,j} \mu_{j/n}(t) \bigg)\Bigg]
		\\
		~ &\le ~ m^2_{i, n}
		\E\Bigg[\Wc^2_2\bigg(\frac{1}{n}\sum_{j = 1}^n\frac{G^n_{i,j}}{m_{i,n}} \delta_{X_{{j/n}}(t)}, 
		\frac{1}{n}\sum_{j = 1}^n\frac{G^n_{i,j}}{m_{i, n}} \mu_{j/n}(t) \bigg)\Bigg],
	\end{align*}
	and for $m_{i, n} = 0$,
	\begin{align*}
		\E\Bigg[\Wop^2_2\bigg(\frac{1}{n}\sum_{j = 1}^nG^n_{i,j}\delta_{X_{{j/n}}(t)}, 
		\frac{1}{n}\sum_{j = 1}^nG^n_{i,j} \mu_{j/n}(t) \bigg)\Bigg]
		~ = ~ 0,
	\end{align*}
	which clearly satisfies the desired inequality \eqref{eq:rate_marginal}.
	Therefore, we only focus on the case $m_{i, n} > 0$ from now on, and for simplicity, we denote
	\begin{equation*}
		\delta^i_n(t) := \frac{1}{n}\sum_{j = 1}^n\frac{G^n_{i,j}}{m_{i, n}}\delta_{X_{{j/n}}(t)} \in \mathcal{P}(\R^d),
		\qquad
		\mu^i_n(t) := \frac{1}{n}\sum_{j = 1}^n\frac{G^n_{i,j}}{m_{i, n}} \mu_{j/n}(t) \in \mathcal{P}(\R^d).
	\end{equation*}
	
	\noindent \textbf{Part 2: Key estimates}
	
	By Lemmas 5 and 6 in \cite{FournierGuillin}, one has that for any $\mu,\nu \in \Pc_2(\R^d)$
	\begin{equation}\label{eq:child_estimate}
		\Wc^2_2(\mu,\nu)
		~ \le ~
		K_d \sum_{k = 0}^\infty 2^{2k}\sum_{l = 0}^\infty 2^{-2l}\sum_{F \in \Pc_l}|\mu(2^kF \cap B_k) - \nu(2^kF \cap B_k)|,
	\end{equation}
	where the constant $K_d$ depends only on dimension $d$, the subsets 
	\[
		B_0 := (-1,1]^d, \qquad B_k :=  (-2^k,2^k]^d\backslash (-2^{k-1},2^{k-1}]^d
	\]
	for $k \in \N$ constitute a partition of $\R^d$, $\Pc_l$ denotes the natural partition of $B_0$ into $2^{dl}$ translations of $(-2^{-l},2^{-l}]^d$ for $l \in \N$, and $2^kF := \{2^k x : x \in F\}$ for $F \subset \R^d$.
	
	Then, to estimate $\sum_{F \in \Pc_l}|\delta^i_n(t)(2^kF \cap B_k) - \mu^i_n(t)(2^kF \cap B_k)|$ for each $n$, we claim that for each $A \in \mathcal{B}(\R^d)$
	\begin{align}\label{eq:each_child_estimate}
		\E[|\delta^i_n(t)(A) - \mu^i_n(t)(A)|]
		~ \le ~
		\min\Bigg\{2\E[\mu^i_n(t)(A)], \sqrt{\frac{\E[\mu^i_n(t)(A)]}{nm_{i,n}}} \Bigg\}.
	\end{align}
	and that for some positive constant $C$ independent of $n$ and $i$, 
	\begin{align}\label{eq:partition_estimate}
		m_{i,n}\E[\mu^i_n(t)(B_k)]
		~ \le ~
		C2^{-(2 + \eps)(k - 1)}.
	\end{align}        
	In fact, we have 
	\begin{align*}
		\E[|\delta^i_n(t)(A) - \mu^i_n(t)(A)|] 
		~ \le & ~
		\frac{1}{n m_{i,n}}\sum^n_{j = 1}G^n_{i,j}
		\E\bigg[\Big|\delta_{X_{{j/n}}(t)}(A) -  \mu_{j/n}(t)(A)\Big|\bigg]
		\\ ~ \le & ~
		\frac{1}{n m_{i,n}}\sum^n_{j = 1}G^n_{i,j}
		\E\bigg[\delta_{X_{{j/n}}(t)}(A) +  \mu_{j/n}(t)(A)\bigg]
		~ = ~ 2\E[\mu^i_n(t)(A)],
	\end{align*}
	and
	\begin{align*}
		& \Big(\E\big[|\delta^i_n(t)(A) - \mu^i_n(t)(A)|\big]\Big)^2
		~ \le ~
		\E \big[|\delta^i_n(t)(A) - \mu^i_n(t)(A)|^2 \big]
		\\ ~ = & ~
		\frac{1}{n^2 m^2_{i,n}}\sum^n_{j,k = 1}G^n_{i,j}G^n_{i,k}
		\E\bigg[\Big(\delta_{X_{{j/n}}(t)}(A) -  \mu_{j/n}(t)(A)\Big)
		\Big(\delta_{X_{k/n}(t)}(A) -  \mu_{j/n}(t)(A)\Big)\bigg]
		\\ ~ = & ~
		\frac{1}{n^2 m^2_{i,n}}\sum^n_{j = 1} (G^n_{i,j})^2
		\E\bigg[\Big(\delta_{X_{{j/n}}(t)}(A) -  \mu_{j/n}(t)(A)\Big)^2 \bigg]
		\\ ~ = & ~
		\frac{1}{n^2 m^2_{i,n}}\sum^n_{j = 1} (G^n_{i,j})^2
		\E\bigg[\delta_{X_{{j/n}}(t)}(A) + \Big( \mu_{j/n}(t)(A)\Big)^2 - 2\delta_{X_{{j/n}}(t)}(A) \mu_{j/n}(t)(A) \bigg]
		\\ ~ = & ~
		\frac{1}{n^2 m^2_{i,n}}\sum^n_{j = 1} (G^n_{i,j})^2
		\E\bigg[\delta_{X_{{j/n}}(t)}(A) - \Big( \mu_{j/n}(t)(A)\Big)^2
		\bigg]
		\\ ~ \le & ~
		\frac{1}{n^2 m^2_{i,n}}\sum^n_{j = 1} G^n_{i,j} \E\bigg[\delta_{X_{{j/n}}(t)}(A)\bigg]
		~ = ~ \frac{\E[\mu^i_n(t)(A)]}{nm_{i,n}},
	\end{align*}
	where the second and fourth equalities hold by the conditional independence of $X_{{j/n}}$ and $X_{\ell/n}$ for $j \neq \ell$, and the tower property of expectation, i.e., for $j\neq \ell$
	\begin{align*}
		&~\E\bigg[\bigg(\delta_{X_{{j/n}}(t)}(A) - \mu_{j/n}(t)(A)\bigg)
		\bigg(\delta_{X_{\ell/n}(t)}(A) - \mu_{\ell/n}(t)(A)\bigg)\bigg]
		\\
		= &~
		\E\bigg[\Big( \delta_{X_{{j/n}}(t)} \delta_{X_{\ell/n}(t)} - \delta_{X_{{j/n}}(t)}\mu_{\ell/n}(t) 
		- \mu_{j/n}(t) \delta_{X_{{\ell/n}}(t)} + \mu_{j/n}(t)\mu_{\ell/n}(t)\Big) (A)\bigg]
		\\
		= &~
		\E\bigg[\E\Big[\delta_{X_{{j/n}}(t)}(A)\delta_{X_{\ell/n}(t)}(A)|W\Big] - \E\Big[\delta_{X_{{j/n}}(t)}(A)|W\Big] \mu_{\ell/n}(t)(A) 
		\\ & \qquad \qquad \qquad \qquad \qquad
		- \mu_{j/n}(t)(A) \E\Big[\delta_{X_{{\ell/n}}(t)}(A)|W\Big] + \mu_{j/n}(t)(A)\mu_{\ell/n}(t)(A)\bigg]
		\\
		= & ~
		\E\bigg[ \E\Big[\delta_{X_{{j/n}}(t)}(A)|W\Big] \E\Big[\delta_{X_{\ell/n}(t)}(A)|W\Big] - \mu_{j/n}(t)(A) \mu_{\ell/n}(t)(A)
		\\ & \qquad \qquad \qquad \qquad \qquad - \mu_{j/n}(t)(A)\mu_{\ell/n}(t)(A) + \mu_{j/n}(t)(A)\mu_{\ell/n}(t)(A)\bigg] = 0.
	\end{align*}
	Finally, we can prove \eqref{eq:partition_estimate}, if we observe that
	\begin{align*}
		m_{i,n}\E[\mu^i_n(t)(B_k)]
		\le & ~ m_{i,n}\E\bigg[\int_{B_k}\bigg(\frac{|x|}{2^{k - 1}}\bigg)^{2 + \eps}\mu^i_n(t)(dx)\bigg]
		\le m_{i,n}\E\bigg[\int_{\R^d}\bigg(\frac{|x|}{2^{k - 1}}\bigg)^{2 + \eps}\mu^i_n(t)(dx)\bigg]
		\\ ~ = & ~
		2^{-(2 + \eps)(k - 1)}
		\frac{1}{n}\sum_{j = 1}^nG^n_{i,j}\E[|X_{{j/n}}(t)|^{2 + \eps}]
		\\ ~ \le & ~
		2^{-(2 + \eps)(k - 1)}\sup_{u \in [0,1]}\E\bigg[\sup_{t \in [0,T]}|X_{u}(t)|^{2 + \eps}\bigg]
		~ \le ~ C2^{-(2 + \eps)(k - 1)},
	\end{align*}
	since $|x| \ge 2^{k-1}$ in $B_k$, and the last inequality holds by Lemma \ref{lem: unique pathwise solution}.
	
	\noindent \textbf{Part 3: Application of the key estimates}
	
	With the estimates \eqref{eq:each_child_estimate} and \eqref{eq:partition_estimate}, we observe that for $F_1, F_2 \in \Pc_l$, with $F_1 \neq F_2$, $F_1 \cap F_2 = \emptyset$:
	\begin{align*}
		&
		\sum_{F \in \Pc_l}\E[|\delta^i_n(t)(2^kF \cap B_k) - \mu^i_n(t)(2^kF \cap B_k)|]
		\\ ~ \le & ~
		\sum_{F \in \Pc_l}\min\Bigg\{2\E[\mu^i_n(t)(2^kF \cap B_k)], \sqrt{\frac{\E[\mu^i_n(t)(2^kF \cap B_k)]}{n m_{i,n}}} \Bigg\}
		\\ ~ \le& ~
		\min\Bigg\{2\sum_{F \in \Pc_l}\E[\mu^i_n(t)(2^kF \cap B_k)], \sum_{F \in \Pc_l}\sqrt{\frac{\E[\mu^i_n(t)(2^kF \cap B_k)]}{n m_{i,n}}} \Bigg\}
		\\ ~ \le& ~
		\min\Bigg\{2\E[\mu^i_n(t)(B_k)], 2^{\frac{dl}{2}}\sqrt{\frac{\E[\mu^i_n(t)(B_k)]}{n m_{i,n}}} \Bigg\}
		\le \frac{C}{m_{i,n}}\min\Bigg\{2^{-(2 + \eps)(k - 1) +1}, 2^{\frac{dl}{2}}\sqrt{\frac{2^{-(2 + \eps)(k - 1)}}{n}} \Bigg\}
		\\ ~ \le & ~
		\frac{C2^{3+\eps}}{m_{i,n}}\min\Bigg\{2^{-(2 + \eps)k}, 2^{\frac{dl}{2}}\sqrt{\frac{2^{-(2 + \eps)k}}{n}} \Bigg\}
	\end{align*}
	where the third inequality uses Cauchy-Schwarz inequality and the fact that $\#(\Pc_l) = 2^{dl}$.
	From all the estimates above, we have for some constant $C_d > 0$,
	\begin{align*}
		&
		\E\Bigg[\Wop^2_2\bigg(\frac{1}{n}\sum_{j = 1}^nG^n_{i,j}\delta_{X_{{j/n}}(t)}, 
		\frac{1}{n}\sum_{j = 1}^nG^n_{i,j} \mu_{j/n}(t) \bigg)\Bigg]
		\\~ \le &~ m_{i,n}^2
		\E\Bigg[\Wc^2_2\bigg(\frac{1}{n}\sum_{j = 1}^n\frac{G^n_{i,j}}{m_{i,n}}\delta_{X_{{j/n}}(t)}, 
		\frac{1}{n}\sum_{j = 1}^n\frac{G^n_{i,j}}{m_{i,n}} \mu_{j/n}(t) \bigg)\Bigg]
		\\ ~ \le & ~
		m_{i,n}^2K_d \sum_{k = 0}^\infty 2^{2k}\sum_{l = 0}^\infty 2^{-2l}\frac{C2^{3+\eps}}{m_{i,n}}\min\Bigg\{2^{-(2 + \eps)k}, 2^{\frac{dl}{2}}\sqrt{\frac{2^{-(2 + \eps)k}}{n}} \Bigg\}
		\\ ~ \le & ~
		C_d2^{3+\eps} \sum_{k = 0}^\infty 2^{2k}\sum_{l = 0}^\infty 2^{-2l}\min\Bigg\{2^{-(2 + \eps)k}, 2^{\frac{dl}{2}}\sqrt{\frac{2^{-(2 + \eps)k}}{n}} \Bigg\},
	\end{align*}
	where the last inequality holds by $m_n \le 1$. The rest of the estimate follows exactly by the same argument as in the proof of Theorem 1 in \cite{FournierGuillin} with $q = 2 + \eps$, thus we conclude the proof.
\end{proof}

\medskip

\begin{proof} [Proof of Theorem \ref{thm: convergence rates}]
	From \eqref{ineq: W2 bound} in the proof of Theorem \ref{thm: weighted mean W2 squared}, we have that for some constant $K > 0$, which may vary from line to line throughout the proof
	\begin{align*}
		\E\bigg[\sup_{t \in [0,T]} \big|X^n_i(t) - X_{i/n}(t) \big|^2 \bigg] 
		~ \le &~ 
		K \E \bigg[\int_0^T \Wop^2_2 \bigg(\frac{1}{n} \sum_{j = 1}^n G^n_{i,j}\delta_{X^n_j(t)}, 
		\mu^G_{i/n}(t) \bigg) \, dt \bigg]
		\\ ~ \le & ~
		K\E \bigg[\int_0^T \Wop^2_2 \bigg(\frac{1}{n} \sum_{j = 1}^n G^n_{i,j}\delta_{X^n_j(t)}, 
		\frac{1}{n} \sum_{j = 1}^n G^n_{i,j}\mu_{j/n}(t)\bigg) \, dt \bigg]
		\\ &+
		K\E \bigg[\int_0^T \Wop^2_2 
		\bigg(\frac{1}{n} \sum_{j = 1}^n G^n_{i,j}\mu_{j/n}(t), \mu^G_{i/n}(t) \bigg) \, dt \bigg].
	\end{align*}
	For the first term on the right-hand side, by Lemma \ref{lem:sampling_rate}, there exists some positive constant $K_{d,\eps} > 0$, which may vary from line to line,
	\begin{align*}
		&\E \bigg[\Wop^2_2 \bigg(\frac{1}{n} \sum_{j = 1}^n G^n_{i,j}\delta_{X^n_j(t)}, \frac{1}{n} \sum_{j = 1}^n G^n_{i,j}\mu_{j/n}(t)\bigg)\bigg]
		\\~ \le &~
		2\E \bigg[\Wop^2_2 \bigg(\frac{1}{n} \sum_{j = 1}^n G^n_{i,j}\delta_{X^n_j(t)}, \frac{1}{n} \sum_{j = 1}^n G^n_{i,j}\delta_{X_{j/n}(t)}\bigg) \bigg]
		\\ & \qquad \qquad \qquad  + 
		2\E \bigg[\Wop^2_2 \bigg(\frac{1}{n} \sum_{j = 1}^n G^n_{i,j}\delta_{X_{j/n}(t)}, \frac{1}{n} \sum_{j = 1}^n G^n_{i,j}\mu_{j/n}(t)\bigg)\bigg]
		\\ ~ \le & ~
		\frac{2}{n}\sum_{j = 1}^n G^n_{i,j}\E\bigg[\big|X^n_j(t) - X_{j/n}(t) \big|^2 \bigg] + K_{d,\eps}M_n
		\le 2\sup_{i \in [n]}\E\bigg[\sup_{s \in [0,t]}|X^n_i(s) - X_{{i/n}}(s)|^2\bigg] + K_{d,\eps}M_n.
	\end{align*}
	By Grönwall's inequality, it follows that
	\begin{align*}
		\sup_{i \in [n]}\E\bigg[\sup_{t \in [0,T]}|X^n_i(t) - X_{{i/n}}(t)|^2\bigg]
		\le K\E \bigg[\int_0^T \Wop^2_2 
		\bigg(\frac{1}{n} \sum_{j = 1}^n G^n_{i,j}\mu_{j/n}(t), \mu^G_{i/n}(t) \bigg) \, dt \bigg] + K_{d,\eps}M_n.
	\end{align*}
	Then, it remains to compute the first term on the right-hand side of the above inequality. 
	Let us define 
	\begin{align*}
		\Gt^n_{u,v} := \sum_{i,j = 1}^nG^n_{i,j}\mathds{1}_{(\frac{i - 1}{n},\frac{i}{n}] \x (\frac{j - 1}{n},\frac{j}{n}]}(u,v), \qquad
		~\mbox{for}~(u,v) \in [0,1]^2
		\\
		\mut^n_{v}(t) := \sum_{j = 1}^n\mu_{j/n}(t)\mathds{1}_{(\frac{j - 1}{n},\frac{j}{n}]}(v), \qquad
		~\mbox{for}~v \in [0,1], \quad t \in [0,T],
	\end{align*}
	then, by the triangle inequality, 
	\begin{align*}
		\E \bigg[\int_0^T \Wop^2_2 
		\bigg(\frac{1}{n} \sum_{j = 1}^n G^n_{i,j}\mu_{j/n}(t), \mu^G_{i/n}(t)\bigg) \, dt \bigg]
		~ \le ~
		2\Dc^n_i + 2\Gc^n_i,
	\end{align*}
	where for $n \in \N$, $i \in [n]$, $t \in [0,T]$, $\Dc^n_i$ and $\Gc^n_i$ are given by
	\begin{align*}
		\Dc^n_i
		:=
		\E \bigg[\int_0^T\Wop^2_2 
		\bigg(\frac{1}{n} \sum_{j = 1}^n G^n_{i,j}\mu_{j/n}(t), \mu^{\Gt}_{i/n}(t) \bigg) dt \bigg],
		\quad          
		\Gc^n_i
		:=
		\E \bigg[\int_0^T \Wop^2_2 
		\big(\mu^{\Gt}_{i/n}(t), \mu^G_{i/n}(t) \big) dt \bigg].
	\end{align*}
	As for $\Dc^n_i$, we observe that
	if $m_{i,n} := \frac{1}{n}\sum_{i = 1}^nG^n_{i,j} = 0$, then $\Dc^n_i = 0$. When $m_{i,n} > 0$, it follow that
	\begin{align*}
		\Dc^n_i
		~ = &~
		\E\bigg[\int_0^T\Wop^2_2 
		\big( \mut^{\Gt}_{i/n}(t), \mu^{\Gt}_{i/n}(t) \big)\, dt\bigg]
		\\~ \le & ~
		m^2_{i,n}
		\E\bigg[\int_0^T\Wc^2_2 
		\bigg( \int_{0}^1 \frac{\Gt^n_{i/n,v}}{m_{i,n}}\mut^n_v(t)dv, 
		\int_{0}^1 \frac{\Gt^n_{i/n,v}}{m_{i,n}}\mu_v(t)dv\bigg)\, dt\bigg]
		\\ ~ \le & ~
		m^2_{i,n}
		\int_0^T\int_{0}^1 \frac{\Gt^n_{i/n,v}}{m_{i,n}}
		\E\big[\Wc_2^2\big(\mut^n_v(t), 
		\mu_v(t)\big)\big]dv\, dt
		\le \int_0^T\int_{0}^1 \E\big[\Wc_2^2\big(\mut^n_v(t), \mu_v(t)\big)\big]dv\, dt \le \frac{K}{n},
	\end{align*}
	where the first inequality holds by $(ii)$ of Lemma \ref{Lem: WOP R^d}, the second one by the continuity estimate from Kantorovitch duality of Wasserstein distance as in the proof of Lemma 3.1 in \cite{nonlinear:graphon},
	and the last one by \eqref{ineq: W^2_2} with $\eps = 0$ for some positive constant $K$.
	
	As for the term $\Gc^n_i$, when $m_{i,n} = 0$ or $m_{i/n} := \int_0^1 G_{i/n,v}dv = 0$, there exists some positive constant $K$, which may vary from line to line, such that
	\begin{align*}
		\Gc^n_i = ~&\E \bigg[\int_0^T 
		(m_{i,n} - m_{i/n})^2 + 
		(m_{i,n} - m_{i/n})    
		\int_{0}^1  \big(\Gt^n_{i/n,v} - G_{i/n,v}\big)\int_{\R^d}|x|^2\mu_v(t)(dx)dv
		\, dt \bigg]
		\\ \le ~& 
		T\int_0^1 \big(\Gt^n_{i/n,v} - G_{i/n,v}\big)^2 dv
		+ T(m_{i,n} - m_{i/n})\int_0^1 \big(\Gt^n_{i/n,v} - G_{i/n,v}\big)\E\bigg[\sup_{t \in [0,T]}|X_v(t)|^2\bigg] dv
		\\ \le ~ &
		K\int_0^1 \big(\Gt^n_{i/n,v} - G_{i/n,v}\big)^2 dv
		~ \le ~ \frac{K}{n^2}.
	\end{align*}
	Then, for the case $m_{i,n}, m_{i/n} > 0$, by Theorem 6.15 in \cite{villani2016optimal}, there exists some positive constant $K$, which may vary from line to line, satisfying
	\begin{align*}
		\Gc^n_i \le ~&  \frac{K}{n^2} + \E \bigg[\int_0^T 
		2m_{i,n}m_{i/n}\int_{\R^d}|x|^2\bigg|\int_{0}^1 \frac{\Gt^n_{i/n,v}}{m_{i,n}}\mu_v(t)dv - 
		\int_{0}^1 \frac{G_{i/n,v}}{m_{i/n}}\mu_v(t)dv\bigg|(dx)
		\, dt \bigg]
		\\ \le ~ &
		\frac{K}{n^2} + 
		\E \bigg[\int_0^T 
		2m_{i,n}m_{i/n}\int_{0}^{1}\bigg|\frac{\Gt^n_{i/n,v}}{m_{i,n}}- \frac{G_{i/n,v}}{m_{i/n}} \bigg|\int_{\R^d}|x|^2\mu_v(t)(dx)\,dv\, dt\bigg]
		\\ \le ~ &
		\frac{K}{n^2} + 
		K\E \bigg[\int_0^T 
		\int_{0}^{1}\Big|m_{i/n}\Gt^n_{i/n,v}- m_{i/n}G_{i/n,v} + m_{i/n}G_{i/n,v} -
		m_{i,n}G_{i/n,v}\Big|\,dv\,dt\bigg]
		\\ \le ~ &
		\frac{K}{n^2} + K\int_0^T 
		\int_{0}^{1}\big|\Gt^n_{i/n,v}- G_{i/n,v}\big| + \big|m_{i/n} - m_{i,n}\big|\,dv\,dt
		\le \frac{K}{n^2} + \frac{K}{n}.
	\end{align*}
	Finally, we can conclude that there exists some positive constant $K_{d,\eps}$ satisfying
	\begin{align*}
		\sup_{i \in [n]}\E\bigg[\sup_{t \in [0,T]}|X^n_i(t) - X_{{i/n}}(t)|^2\bigg]
		\le 
		\frac{K}{n^2} + \frac{K}{n}+ K_{d,\eps}M_n
		~ \le ~
		K_{d,\eps}M_n.
	\end{align*}
\end{proof}

\bigskip

\section{Characterization of $\Pc(\Mc_{2,K}(\Cc^d) \x \Mc_{2,K}(\Cc^d))$}

The following two characterization results are inspired by Proposition A.3 in \cite{DTP}. The aim is to find a more feasible way to identify that two probability measures on $\Mc_{2,K}(\Cc^d) \x \Mc_{2,K}(\Cc^d)$ are the same.
\begin{thm}\label{thm: characterization}
	Suppose that $\Gamma_1, \Gamma_2 \in \Pc(\Mc_{2,K}(\Cc^d) \x \Mc_{2,K}(\Cc^d))$ satisfy 
	\begin{align}\label{eq:characterization}
		\int_{\Mc^2_{2,K}(\Cc^d)} \prod_{i = 1}^k\prod_{j = 1}^l \langle\varphi_i,\mu\rangle \langle\phi_j,\nu\rangle \Gamma_1(d\mu,d\nu)
		= 
		\int_{\Mc^2_{2,K}(\Cc^d)} \prod_{i = 1}^k\prod_{j = 1}^l \langle\varphi_i,\mu\rangle \langle\phi_j,\nu\rangle \Gamma_2(d\mu,d\nu)
	\end{align}
	for all $k, l \ge 1$, and $\varphi_i, \phi_j \in C_b(\Cc^d)$ for $i \in [k], j \in [l]$. Then, $\Gamma_1 = \Gamma_2$.
\end{thm}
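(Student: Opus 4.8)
The plan is to reduce the claim to a standard monotone-class / Stone–Weierstrass argument on the Polish space $\Mc_{2,K}(\Cc^d)\x\Mc_{2,K}(\Cc^d)$. First I would observe that, since this product space is Polish (by the Polishness of $(\Mc_{2,K}(\Cc^d),\Wop_2)$ proved above), two Borel probability measures $\Gamma_1,\Gamma_2$ coincide as soon as they agree against a family of bounded continuous test functions that is rich enough to generate the Borel $\sigma$-algebra — concretely, it suffices that they agree on an algebra of bounded continuous functions that separates points and is closed under multiplication, then invoke the functional version of the monotone class theorem (or Stone–Weierstrass on compacts combined with tightness of each $\Gamma_i$). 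So the real content is: the functions $(\mu,\nu)\mapsto \prod_{i=1}^k\langle\varphi_i,\mu\rangle\prod_{j=1}^l\langle\phi_j,\nu\rangle$, as $k,l$ range over $\N$ and $\varphi_i,\phi_j$ over $C_b(\Cc^d)$, together with constants, form such a point-separating multiplicative family.

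The key steps, in order, would be: (1) Let $\mathcal{A}$ be the linear span of the functions $F_{\varphi_1,\dots,\varphi_k;\phi_1,\dots,\phi_l}(\mu,\nu):=\prod_i\langle\varphi_i,\mu\rangle\prod_j\langle\phi_j,\nu\rangle$ together with the constant $1$ (take $k=l=0$). Each such $F$ is bounded on $\Mc_{2,K}(\Cc^d)^2$ because $\langle\varphi,\mu\rangle\le \|\varphi\|_\infty m_\mu$ and masses are bounded (note $m_\mu\le \sqrt{M_{x_0}(\mu)/\cdots}$ is controlled via the $K$-constraint together with, say, taking $\varphi\equiv 1$ to recover the mass, so masses lie in a bounded set on $\Mc_{2,K}$ once one also uses that a tight family has bounded mass — here one should be slightly careful, see the obstacle below). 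Each $F$ is continuous for the $\Wop_2$-topology: $\mu_n\to\mu$ in $\Wop_2$ implies $m_{\mu_n}\to m_\mu$ and $\bar\mu_n\to\bar\mu$ weakly, hence $\langle\varphi,\mu_n\rangle=m_{\mu_n}\langle\varphi,\bar\mu_n\rangle\to m_\mu\langle\varphi,\bar\mu\rangle=\langle\varphi,\mu\rangle$ for $\varphi\in C_b(\Cc^d)$; products of continuous bounded functions are continuous. (2) Check $\mathcal{A}$ is an algebra: the product of two generators is again a generator with the index sets concatenated, so $\mathcal{A}$ is closed under multiplication, and it is a vector space by construction. (3) Check $\mathcal{A}$ separates points of $\Mc_{2,K}(\Cc^d)^2$: if $(\mu_1,\nu_1)\neq(\mu_2,\nu_2)$, then WLOG $\mu_1\neq\mu_2$ as finite measures, so there is $\varphi\in C_b(\Cc^d)$ with $\langle\varphi,\mu_1\rangle\neq\langle\varphi,\mu_2\rangle$ (bounded continuous functions separate finite Borel measures on a Polish space — e.g. by regularity, approximating indicators of a distinguishing Borel set), and the single-factor function $F_{\varphi;}$ distinguishes the two pairs. (4) Conclude: by the Stone–Weierstrass theorem, $\mathcal{A}$ restricted to any compact subset $L\subset\Mc_{2,K}(\Cc^d)^2$ is dense in $C(L)$ for the sup norm; combined with the inner regularity / tightness of the Borel measures $\Gamma_1,\Gamma_2$ on the Polish space (choose $L$ with $\Gamma_i(L^c)<\eps$ for both $i$ simultaneously), the hypothesis \eqref{eq:characterization} that $\int F\,d\Gamma_1=\int F\,d\Gamma_2$ for all $F\in\mathcal{A}$ upgrades to $\int g\,d\Gamma_1=\int g\,d\Gamma_2$ for all $g\in C_b$, whence $\Gamma_1=\Gamma_2$.

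The main obstacle, and the step needing the most care, is the interplay between boundedness of the test functions $F$ and the non-compactness of $\Mc_{2,K}(\Cc^d)$. The generators $F$ are bounded on $\Mc_{2,K}(\Cc^d)^2$ only if the total masses $m_\mu$ stay in a bounded interval, which is \emph{not} automatic from the $K$-constraint alone; one must either restrict attention from the outset to a bounded-mass sub-collection (which is harmless here since all the measures appearing in the applications — empirical and graphon-weighted measures with $G\in[0,1]$ — have mass at most $1$), or run the Stone–Weierstrass step only on compacts $L$, on which masses are automatically bounded, and then use tightness. I would take the latter route: work on compacts, use that on each compact $L$ the family $\mathcal{A}|_L$ is a point-separating unital subalgebra of $C(L)$, apply Stone–Weierstrass there, and finish with a standard $\eps$-tightness argument using that finite Borel measures on a Polish space are tight. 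A second minor point to get right is justifying that $\langle\varphi,\cdot\rangle$ is $\Wop_2$-continuous for \emph{every} $\varphi\in C_b$, not merely Lipschitz ones; this follows from the mass-plus-weak-convergence characterization of $\Wop_2$-convergence noted in step (1), which in turn rests on Definition \ref{Def: WOP C^d} and \eqref{eq: WOP}.
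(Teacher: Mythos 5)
Your proposal takes a genuinely different route from the paper. The paper first passes from products of $\langle\varphi_i,\mu\rangle$ to polynomials of these quantities (by linearity), then to bounded continuous compositions $\Psi_i(\langle\varphi_i,\mu\rangle)$, then to indicator compositions $\mathds{1}_{(-\infty,r_i)}(\langle\varphi_i,\mu\rangle)$; this yields agreement of $\Gamma_1,\Gamma_2$ on a $\pi$-system of cylinder sets, which generates the Borel $\sigma$-field because $C_b$ determines the weak topology, and the proof closes with the monotone class theorem. You instead form the algebra $\mathcal{A}$, verify it separates points, and invoke Stone--Weierstrass on compacts plus tightness. The $\pi$-system / Dynkin route is cheaper in one respect: it never needs to approximate a general $g\in C_b$ by elements of $\mathcal{A}$, only indicators by bounded continuous functions, and indicators are automatically bounded, so dominated convergence is available for free at that step.

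Here is the concrete gap in your preferred (``latter'') route. Having chosen a compact $L$ with $\Gamma_i(L^c)<\eps$ and $F\in\mathcal{A}$ with $\sup_L|F-g|<\eps$, your chain of inequalities needs $\int_L F\,d\Gamma_1\approx\int_L F\,d\Gamma_2$. But the hypothesis only provides $\int F\,d\Gamma_1=\int F\,d\Gamma_2$, and the difference between these and the $L$-restricted integrals is $\int_{L^c}F\,d\Gamma_i$, over which you have no control: $F$ need not be bounded on $\Mc_{2,K}(\Cc^d)^2$ (the constraint $M_{x_0}(\mu)\le K m_\mu$ does not bound $m_\mu$), so $\Gamma_i(L^c)<\eps$ gives no bound on $\int_{L^c}F\,d\Gamma_i$. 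Tightness alone does not close this. Your first proposed fix --- working from the outset on a bounded-mass sub-collection --- is what actually closes the gap, and it is also implicitly needed by the paper: the paper's claim that ``every bounded continuous function on $\R$ can be approximated by polynomials'' holds only uniformly on compacts, so to pass the limit inside the integral one again needs $\langle\varphi_i,\mu\rangle$ to take values in a bounded set, i.e.\ a uniform mass bound (which is available in every application of the theorem, since the relevant interaction measures have total mass at most $1$). A minor further point: your continuity argument implicitly asserts $\bar\mu_n\to\bar\mu$ weakly from $\Wop_2$-convergence, which can fail when $m_\mu=0$; but the conclusion $\langle\varphi,\mu_n\rangle\to 0$ still holds there because $\langle\varphi,\bar\mu_n\rangle$ is bounded by $\|\varphi\|_\infty$ while $m_{\mu_n}\to 0$, so continuity of $F$ survives.

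Bottom line: the structure of your argument is sound and the point-separation check is right, but the ``Stone--Weierstrass on compacts + tightness'' finishing step as described does not go through; replace it with the bounded-mass restriction you already identified, or switch to the paper's route through polynomials and indicators and a $\pi$-$\lambda$ argument, which only ever needs to dominate \emph{bounded} test functions.
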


\begin{proof}
	First, by linearity of integral and arbitrariness of $k$, we can generalize \eqref{eq:characterization} as follows:
	for any polynomials $\Psi_i, \Phi_j$ on $\R$ and $i \in [k]$, $j \in [l]$
	\begin{align}
		&~~~\int_{\Mc^2_K(\Cc^d)}
		\prod_{i = 1}^k\prod_{j = 1}^l
		\Psi_i(\langle\varphi_i,\mu\rangle)\Phi_j( \langle\phi_j,\nu\rangle)
		\Gamma_1(d\mu,d\nu)		\label{eq:polynomial_characterization}
		\\
		&= 
		\int_{\Mc^2_K(\Cc^d)}
		\prod_{i = 1}^k\prod_{j = 1}^l
		\Psi_i(\langle\varphi_i,\mu\rangle)\Phi_j( \langle\phi_j,\nu\rangle)
		\Gamma_2(d\mu,d\nu).	\nonumber
	\end{align}
	Since every bounded continuous function on $\R$ can be approximated by polynomials, we can further prove \eqref{eq:polynomial_characterization} for any bounded continuous functions $\Psi_i, \Phi_j$ on $\R$. 
	Again, notice that for any $r \in \R$, the function $\mathds{1}_{(-\infty,r)}$ can be approximated by a sequence of bounded continuous functions, we have for any $r_i, s_j \in \R$, $i \in [k]$, $j \in [l]$,
	\begin{align*}
		\int_{\Mc^2_K(\Cc^d)}
		\prod_{i = 1}^k\prod_{j = 1}^l
		\mathds{1}_{(-\infty,r_i)}(\langle\varphi_i,\mu\rangle)
		\mathds{1}_{(-\infty,s_j)}( \langle\phi_j,\nu\rangle)
		\Gamma_1(d\mu,d\nu)
		\\= 
		\int_{\Mc^2_K(\Cc^d)}
		\prod_{i = 1}^k\prod_{j = 1}^l
		\mathds{1}_{(-\infty,r_i)}(\langle\varphi_i,\mu\rangle)
		\mathds{1}_{(-\infty,s_j)}( \langle\phi_j,\nu\rangle)
		\Gamma_2(d\mu,d\nu).
	\end{align*}
	Consequently, we can say that
	$\Gamma_1 = \Gamma_2$ on $B$, where
	\begin{align*}
		B := \Big\{\{(\mu,\nu) \in \Mc^2_K(\Cc^d): \langle\varphi_i,\mu\rangle < r_i, &
		\langle\phi_j,\nu\rangle < s_j, i \in [k], j \in [l]\}
		\\&: k,l \ge 1,r_i,s_j \in \R, \varphi_i, \phi_j \in C_b(\Cc^d),  i \in [k], j \in [l]\Big\}.
	\end{align*}
	Since the weak convergence topology on $\Mc_{2,K}(\Cc^d)$ can be characterized by bounded continuous functions, the Borel $\sigma$-field on  $\Mc_{2,K}(\Cc^d)$ can be generated by all open sets in $\Mc_{2,K}(\Cc^d)$, and the product of Borel $\sigma$-fields on $\Mc_{2,K}(\Cc^d)$ is equal to Borel $\sigma$-field on the product space $\Mc^2_{2,K}(\Cc^d)$, it follows that $\Bc(\Mc^2_{2,K}(\Cc^d)) = \sigma(B)$. Then, we can conclude the proof by a monotone class argument.
\end{proof}

Similarly, we have the following result.
\begin{thm}\label{thm: characterization2}
	Suppose that $\Gamma_1, \Gamma_2 \in \Pc(\Pc(\Cc^d) \x \Pc(\Cc^d))$ satisfy
	\begin{align}
		\int_{\Pc^2(\Cc^d)} \prod_{i = 1}^k\prod_{j = 1}^l \langle\varphi_i,\mu\rangle \langle\phi_j,\nu\rangle \Gamma_1(d\mu,d\nu)
		= 
		\int_{\Pc^2(\Cc^d)} \prod_{i = 1}^k\prod_{j = 1}^l \langle\varphi_i,\mu\rangle \langle\phi_j,\nu\rangle \Gamma_2(d\mu,d\nu)
	\end{align}
	for all $\varphi_i, \phi_j \in C_b(\Cc^d)$, $i \in [k]$, $j \in [l]$, $k, l \ge 1$. 
	Then, $\Gamma_1 = \Gamma_2$.
\end{thm}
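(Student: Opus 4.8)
The plan is to mirror the argument used for Theorem~\ref{thm: characterization}, since the only structural difference is that here the two factors are genuine probability measures rather than positive measures of bounded mass in $\Mc_{2,K}(\Cc^d)$; this in fact makes the bookkeeping slightly easier, because every functional $\mu \mapsto \langle \varphi, \mu \rangle$ with $\varphi \in C_b(\Cc^d)$ is automatically bounded by $\|\varphi\|_\infty$ when $\mu$ is a probability measure. First I would fix the hypothesis identity and, using linearity of the integral together with the arbitrariness of $k$ and $l$, upgrade it to
\begin{equation*}
    \int_{\Pc^2(\Cc^d)} \prod_{i=1}^k \prod_{j=1}^l \Psi_i(\langle \varphi_i, \mu \rangle)\, \Phi_j(\langle \phi_j, \nu \rangle)\, \Gamma_1(d\mu,d\nu)
    =
    \int_{\Pc^2(\Cc^d)} \prod_{i=1}^k \prod_{j=1}^l \Psi_i(\langle \varphi_i, \mu \rangle)\, \Phi_j(\langle \phi_j, \nu \rangle)\, \Gamma_2(d\mu,d\nu)
\end{equation*}
for all real polynomials $\Psi_i,\Phi_j$. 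By the Weierstrass approximation theorem each $\Psi_i$ (resp. $\Phi_j$) may be uniformly approximated by polynomials on the compact interval $[-\|\varphi_i\|_\infty,\|\varphi_i\|_\infty]$ (resp. $[-\|\phi_j\|_\infty,\|\phi_j\|_\infty]$), so by bounded convergence the identity persists for all bounded continuous $\Psi_i,\Phi_j:\R\to\R$.

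Next I would approximate, for each $r\in\R$, the indicator $\mathds{1}_{(-\infty,r)}$ pointwise and boundedly by bounded continuous functions, and apply the dominated convergence theorem (successively in each factor) to deduce
\begin{equation*}
    \Gamma_1\big(\{(\mu,\nu): \langle\varphi_i,\mu\rangle < r_i,\ \langle\phi_j,\nu\rangle < s_j,\ i\in[k],\ j\in[l]\}\big)
    =
    \Gamma_2\big(\{(\mu,\nu): \langle\varphi_i,\mu\rangle < r_i,\ \langle\phi_j,\nu\rangle < s_j,\ i\in[k],\ j\in[l]\}\big)
\end{equation*}
for all $k,l\ge 1$, thresholds $r_i,s_j\in\R$, and $\varphi_i,\phi_j\in C_b(\Cc^d)$. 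Denote by $B$ the collection of all such sets. One checks that $B$ is a $\pi$-system (intersecting two of its members yields another one, after concatenating the lists of test functions and thresholds) and that $\sigma(B)=\Bc(\Pc(\Cc^d)\x\Pc(\Cc^d))$: since $\Cc^d$ is Polish, $\Pc(\Cc^d)$ equipped with the weak topology is Polish and its Borel $\sigma$-field is generated by the evaluation maps $\mu\mapsto\langle\varphi,\mu\rangle$, $\varphi\in C_b(\Cc^d)$; moreover $\Pc(\Cc^d)$ is separable, so the product of the two Borel $\sigma$-fields coincides with the Borel $\sigma$-field of the product space. Hence $\Gamma_1$ and $\Gamma_2$ agree on a $\pi$-system generating the full Borel $\sigma$-field, and, both being probability measures, Dynkin's $\pi$--$\lambda$ theorem gives $\Gamma_1=\Gamma_2$.

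There is no genuinely hard step here; the proof is a routine monotone-class/Dynkin bootstrap, identical in spirit to that of Theorem~\ref{thm: characterization}. The only points requiring a little care are (i) verifying that the evaluation functionals really generate $\Bc(\Pc(\Cc^d))$ and that the product $\sigma$-algebra equals the Borel $\sigma$-algebra of the product space --- both resting on the separability (indeed Polishness) of $\Cc^d$ and hence of $\Pc(\Cc^d)$ --- and (ii) ensuring that at each approximation stage the approximating functions stay uniformly bounded so that bounded/dominated convergence applies, which is automatic since $|\langle\varphi,\mu\rangle|\le\|\varphi\|_\infty$ for probability measures and $\Gamma_1,\Gamma_2$ are finite.
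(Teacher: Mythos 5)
Your proof is correct and follows essentially the same route the paper takes: the paper proves Theorem~\ref{thm: characterization} via polynomials $\to$ bounded continuous functions $\to$ indicators $\to$ a generating family of cylinder sets $\to$ monotone class, and then simply states Theorem~\ref{thm: characterization2} ``similarly,'' which is exactly what you have written out. Your explicit use of Weierstrass on the compact interval $[-\|\varphi_i\|_\infty,\|\varphi_i\|_\infty]$ (available because $\mu$ is a probability measure) and the explicit check that $B$ is a $\pi$-system are welcome clarifications, but the argument is the same.
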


\bigskip

\section{Linear graphon system}   \label{sec: linear case}
In the graphon particle system \eqref{eq : graphon particle system with common noise} and the finite $n$-particle system \eqref{eq : finite particle system with common noise}, suppose that the functions $\phi_{\alpha}$ take the specific form
\begin{equation*}
	\phi_{\alpha}(x, \mu) := \int_{\R^d} \hat{\phi}_{\alpha}(x, y) d\mu(y)
\end{equation*}
for some bounded Lipschitz function $\hat{\phi}_{\alpha}$, defined on $\R^d \times \R^d$, for $\alpha = p, b, w$:
\begin{align*}
	|\hat{\phi}_{\alpha}(x_1, y_1) | \le L, \quad |\hat{\phi}_{\alpha}(x_1, y_1) - \hat{\phi}_{\alpha}(x_2, y_2)| \le L\big( |x_1-x_2|+|y_1-y_2| \big), \quad \forall \, x_1, x_2, y_1, y_2 \in \R^d.
\end{align*}
In this case, interactions between the agents are linear in their empirical measures, and the two systems take similar linear forms as in \cite{BCW, Bayraktar_Kim_2024, BW2, BW, BWZ}.

To show that all the results in this paper apply to the linear graphon systems, we claim that this specific form of $\phi_{\alpha}$ satisfies Assumption \ref{ass:Lipschitz} by establishing for any $x, y \in \R^d$, $\mu, \nu \in \mathcal{M}_{2,K}(\R^d)$, and $\alpha = p, b, w$
\begin{equation}    \label{angle Lipschitz}
	\big| \phi_{\alpha}(x, \mu) - \phi_{\alpha}(y, \mu) \big| \le L m_{\mu}|x-y|, \quad 
	\big| \phi_{\alpha}(x, \mu) - \phi_{\alpha}(x, \nu) \big|^2 \le L^2 \Wop^2_2(\mu, \nu).
\end{equation}
The first inequality of \eqref{angle Lipschitz} is straightforward:
\begin{align*}
	\big| \phi_{\alpha}(x, \mu) - \phi_{\alpha}(y, \mu) \big|
	\le \int_{\R^d} \Big| \hat{\phi}_{\alpha}(x, z) - \hat{\phi}_{\alpha}(y, z) \Big| d\mu(z)
	\le \int_{\R^d} L |x-y| d\mu(z) = L m_{\mu} |x-y|.
\end{align*}

For the second inequality, consider the two probability measures $\mu^\circ$, $\nu^\circ$, recalling the notation from Section \ref{subsec: Wop}. Let us first assume $m_{\mu} m_{\nu} \neq 0$. Consider an arbitrary probability measure $\bar{\gamma}$ on $\R^d \times \R^d$ with marginals $T_{m_{\mu}\#}\bar{\mu}$ and $T_{m_{\nu}\#}\bar{\nu}$, and use the properties of $\hat{\phi}_{\alpha}$ to derive
\begin{align}
	&\big| \phi_{\alpha}(x, \mu) - \phi_{\alpha}(x, \nu) \big|^2
	= \bigg| \int_{\R^d} m_{\mu} \hat{\phi}_{\alpha}(x, y) d\bar{\mu}(y) - \int_{\R^d} m_{\nu} \hat{\phi}_{\alpha}(x, z) d\bar{\nu}(z) \bigg|^2             \nonumber
	\\
	& \qquad \le 2\bigg| \int_{\R^d} m_{\mu} \hat{\phi}_{\alpha}(x, y) d\bar{\mu}(y) - \int_{\R^d} m_{\mu} \hat{\phi}_{\alpha}(x, z) d\bar{\nu}(z) \bigg|^2             \label{zero mass}
	\\
	& \qquad \qquad \qquad \qquad \qquad \qquad + 2\bigg| \int_{\R^d} m_{\mu} \hat{\phi}_{\alpha}(x, z) d\bar{\nu}(z) - \int_{\R^d} m_{\nu} \hat{\phi}_{\alpha}(x, z) d\bar{\nu}(z) \bigg|^2             \nonumber
	\\
	& \qquad \le 2\bigg| \int_{\R^d} m_{\mu} \hat{\phi}_{\alpha}(x, \frac{y}{m_{\mu}}) dT_{m_{\mu}\#}\bar{\mu}(y) - \int_{\R^d} m_{\mu} \hat{\phi}_{\alpha}(x, \frac{z}{m_{\nu}}) dT_{m_{\nu}\#}\bar{\nu}(z) \bigg|^2 + 2L^2 (m_{\mu}-m_{\nu})^2             \nonumber
	\\
	& \qquad \le 2 L^2 m_{\mu}^2 \iint_{\R^d \times \R^d} \Big| \frac{y}{m_{\mu}} - \frac{z}{m_{\nu}} \Big|^2 d\bar{\gamma}(y, z) + 2L^2 (m_{\mu}-m_{\nu})^2             \nonumber
	\\
	& \qquad \le 4 L^2 m_{\mu}^2 \iint_{\R^d \times \R^d} \Big| \frac{y}{m_{\mu}} - \frac{z}{m_{\mu}} \Big|^2 d\bar{\gamma}(y, z)               \nonumber
	\\
	& \qquad \qquad \qquad \qquad \qquad \qquad + 4 L^2 m_{\mu}^2 \iint_{\R^d \times \R^d} \Big| \frac{z}{m_{\mu}} - \frac{z}{m_{\nu}} \Big|^2 d\bar{\gamma}(y, z) + 2L^2 (m_{\mu}-m_{\nu})^2             \nonumber
	\\
	& \qquad \le 4L^2 \iint_{\R^d \times \R^d} |y-z|^2 d\bar{\gamma}(y, z) + 4L^2 m_{\mu}^2 \Big(\frac{1}{m_{\mu}}-\frac{1}{m_{\nu}}\Big)^2 M_{0}(T_{m_{\nu}\#}\bar{\nu}) + 2L^2 (m_{\mu}-m_{\nu})^2.             \nonumber
\end{align}
By taking the minimum over such probability measures $\bar{\gamma}$ with marginals $T_{m_{\mu}\#}\bar{\mu}$ and $T_{m_{\nu}\#}\bar{\nu}$, we obtain
\begin{align*}
	\big| \phi_{\alpha}(x, \mu) - \phi_{\alpha}(x, \nu) \big|^2
	&\le 4 L^2 \Wc_2^2(T_{m_{\mu}\#}\bar{\mu}, T_{m_{\nu}\#}\bar{\nu})
	\\
	& \qquad \qquad \qquad+ 4L^2 \frac{(m_{\nu}-m_{\mu})^2}{m_{\nu}^2} M_{0}(T_{m_{\nu}\#}\bar{\nu}) + 2L^2 (m_{\mu}-m_{\nu})^2
	\\
	&\le 4 L^2 \Wc_2^2(T_{m_{\mu}\#}\bar{\mu}, T_{m_{\nu}\#}\bar{\nu}) + (m_{\nu}-m_{\mu})^2 \Big( \frac{4L^2}{m_{\nu}^2} M_{0}(T_{m_{\nu}\#}\bar{\nu}) + 2L^2 \Big)
	\\
	&\le \max\Big\{4L^2, ~\frac{4L^2}{m_{\nu}^2} M_{0}(T_{m_{\nu}\#}\bar{\nu}) + 2L^2 \Big\} \Wop_2^2 (\mu, \nu)
	\\
	& = \max\Big\{4L^2, ~4L^2 M_{0}(\bar{\nu}) + 2L^2 \Big\} \Wop_2^2 (\mu, \nu)
	\\
	& \le \max\Big\{4L^2, ~4L^2 K + 2L^2 \Big\} \Wop_2^2 (\mu, \nu).
\end{align*}
Here, the third inequality uses the definition \eqref{def: WOP}, and this proves the second Lipschitz property of \eqref{angle Lipschitz}.

When one of $\{m_{\mu}, m_{\nu}\}$ is zero, say $m_{\mu} = 0$, the first term of \eqref{zero mass} is zero, so we have $| \phi_{\alpha}(x, \mu) - \phi_{\alpha}(x, \nu)|^2 \le 2L^2(m_{\mu}-m_{\nu})^2$  and the result \eqref{angle Lipschitz} holds.
\end{appendix}
\bigskip

\bigskip
\noindent \textbf{Funding}

\smallskip

\noindent E. Bayraktar is supported in part by the National Science Foundation and by the Susan M. Smith Professorship.

\noindent D. Kim is supported by the National Research Foundation of Korea (NRF) grant funded by the Korea government (MSIT) RS-2025-00513609 and RS-2019-NR040050.

\bibliographystyle{plain}   % or ieeetr, unsrt, acm, etc.
\bibliography{ref}

\end{document}